\definecolor{my-linkcolor}{rgb}{0.75,0,0}
\definecolor{my-citecolor}{rgb}{0.1,0.57,0}
\definecolor{my-urlcolor}{rgb}{0,0,0.75}
\title[Direct limit completions]{Direct limit completions of vertex tensor categories}
 \author{Thomas Creutzig, Robert McRae and Jinwei Yang}
\date{}
\address{(T. C. and J. Y.) Department of Mathematical and Statistical Sciences, University of Alberta, Edmonton, Alberta T6G 2R3, Canada}
\email{creutzig@ualberta.ca, jinwei2@ualberta.ca}
\address{(R. M.) Yau Mathematical Sciences Center, Tsinghua University, Beijing 100084, China}
 \email{rhmcrae@tsinghua.edu.cn}
 \subjclass[2010]{Primary 17B69, 18D10, 81R10}
\newtheorem{thm}{Theorem}[section]
\newtheorem{cor}[thm]{Corollary}
\newtheorem{lem}[thm]{Lemma}
\newtheorem{prop}[thm]{Proposition}
\theoremstyle{definition}\newtheorem{defi}[thm]{Definition}
\theoremstyle{definition}\newtheorem{rem}[thm]{Remark}
\theoremstyle{definition}\newtheorem{exam}[thm]{Example}
\theoremstyle{definition}\newtheorem{assum}[thm]{Assumption}
\newcommand{\cY}{\mathcal{Y}}
\newcommand{\cV}{\mathcal{V}}
\newcommand{\cA}{\mathcal{A}}
\newcommand{\cR}{\mathcal{R}}
\newcommand{\cM}{\mathcal{M}}
\newcommand{\cF}{\mathcal{F}}
\newcommand{\cC}{\mathcal{C}}
\newcommand{\cG}{\mathcal{G}}
\newcommand{\cW}{\mathcal{W}}
\newcommand{\til}{\widetilde}
\newcommand{\CC}{\mathbb{C}}
\newcommand{\ZZ}{\mathbb{Z}}
\newcommand{\NN}{\mathbb{N}}
\newcommand{\RR}{\mathbb{R}}
\newcommand{\fA}{\mathfrak{A}}
\newcommand{\Id}{\mathrm{Id}}
\newcommand{\tens}{\boxtimes}
\newcommand{\vac}{\mathbf{1}}
\newcommand{\ind}{\mathrm{Ind}}
\DeclareMathOperator{\im}{Im}
\DeclareMathOperator{\rep}{Rep}
\let\ker\relax
\let\hom\relax
\DeclareMathOperator{\ker}{Ker}
\DeclareMathOperator{\hom}{Hom}
\DeclareMathOperator{\Endo}{End}
\begin{document}
\bibliographystyle{alpha}

\numberwithin{equation}{section}

 \begin{abstract}
  We show that direct limit completions of vertex tensor categories inherit vertex and braided tensor category structures, under conditions that hold for example for all known Virasoro and  affine Lie algebra tensor categories. A consequence is that the theory of vertex operator (super)algebra extensions also applies to infinite-order extensions. As an application, we relate  rigid and non-degenerate vertex tensor categories of certain modules for both the affine vertex superalgebra of $\mathfrak{osp}(1|2)$ and the $N=1$ super Virasoro algebra to categories of Virasoro algebra modules via certain cosets.
 \end{abstract}

\maketitle

\tableofcontents

\section{Introduction}

A central problem in the representation theory of vertex operator algebras is existence of vertex tensor category structure \cite{HL-vrtx-tens} on module categories; this in particular implies existence of braided tensor category structure. Unfortunately, the assumptions on the module category that are needed to apply the (logarithmic) vertex tensor category theory of Huang-Lepowsky-Zhang \cite{HLZ1}-\cite{HLZ8} are rather extensive and can be difficult to verify in practice. For example, only recently has vertex tensor category structure been established on the category of $C_1$-cofinite modules for the universal Virasoro vertex operator algebra at arbitrary central charge \cite{CJORY}.

One way to avoid direct verification of the vertex tensor category theory assumptions is to use extension theory. Suppose a vertex operator algebra $A$ is an object in a category $\cC$ of modules for a subalgebra $V$ that is already known to have vertex tensor category structure. Then $A$ is a commutative algebra in the braided tensor category $\cC$ \cite{HKL} and tensor-categorical methods can be used in showing that the category of $A$-modules which are objects of $\cC$ (when viewed as $V$-modules) also has vertex and braided tensor category structures \cite{CKM-ext}. This method works well when $A$ is a finite-order extension of a nice vertex operator subalgebra $V$.

However, it often occurs that $A$ is not actually an object of $\cC$ but is rather, for example, an infinite direct sum of modules in $\cC$. For example, the singlet $W$-algebras \cite{Ad} are infinite direct sums of irreducible $C_1$-cofinite modules for their Virasoro subalgebras. The usual tensor-categorical method for handling such examples is to realize $A$ as an object in the ind-completion of $\cC$ (see for example \cite[Remark 7.8.2]{EGNO}); if $\cC$ is semisimple, this agrees with the direct sum completion $\cC_\oplus$ of \cite{AR}. The ind-completion $\ind(\cC)$, as constructed in references such as \cite{KS}, is the full subcategory of direct limits in the category of contravariant functors from $\cC$ to sets, into which $\cC$ embeds by the Yoneda Lemma. Or, when $\cC$ is a $\CC$-linear abelian category, we can use the category of contravariant functors from $\cC$ to $\CC$-vector spaces. But unfortunately, this ind-completion is not suitable for studying vertex operator algebra extensions. The problem is that if we want the category of $A$-modules in $\ind(\cC)$ to have the correct vertex algebraic tensor category structure, then the underlying tensor category structure on $\ind(\cC)$ must also be vertex algebraically natural. In particular, objects of $\ind(\cC)$ must be genuine $V$-modules of a sort, and the tensor product on $\ind(\cC)$ must satisfy the intertwining operator universal property of \cite[Definition 4.15]{HLZ3}.

In this paper, we construct an alternative direct limit completion (which we also denote $\ind(\cC)$) of a vertex tensor category $\cC$ that is suitable for studying representations of an extension algebra $A$. Especially, we show that under suitable conditions, $\ind(\cC)$ has vertex and braided tensor category structures as described in the tensor category theory of \cite{HLZ1}-\cite{HLZ8}. Instead of realizing $\ind(\cC)$ within the category of contravariant functors from $\cC$ to vector spaces, we use the ambient category of weak $V$-modules: we define $\ind(\cC)$ to be the full subcategory of weak modules whose objects are isomorphic to direct limits of inductive systems into $\cC$. Equivalently, $\ind(\cC)$ is the category of generalized $V$-modules (in the sense of \cite[Definition 2.12]{HLZ1}) which are the unions of submodules that are objects of $\cC$. Now here is our main theorem:

\begin{thm}\label{thm:main_thm}
 Let $V$ be a vertex operator algebra, $\cC$ a category of grading-restricted generalized $V$-modules, and $\ind(\cC)$ the category of generalized $V$-modules that are the unions of their $\cC$-submodules. Assume also that:
 \begin{enumerate}
  \item The vertex operator algebra $V$ is an object of $\cC$.
  \item The category $\cC$ is closed under submodules, quotients, and finite direct sums.
  \item Every module in $\cC$ is finitely generated.
  \item The category $\cC$ admits $P(z)$-vertex and braided tensor category structures as described in Section \ref{sec:vertex_tensor}.
  \item For any intertwining operator $\cY$ of type $\binom{X}{W_1\,W_2}$ where $W_1$, $W_2$ are modules in $\cC$ and $X$ is a generalized $V$-module in $\ind(\cC)$, the submodule $\im\cY\subseteq X$ is an object of $\cC$.
 \end{enumerate}
Then $\ind(\cC)$ also admits $P(z)$-vertex and braided tensor category structures, as described in Section \ref{sec:vertex_tensor}, that extend those on $\cC$.
\end{thm}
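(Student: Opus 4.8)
The plan is to construct the vertex tensor category structure on $\ind(\cC)$ by extending the existing structure on $\cC$ via direct limits. The fundamental idea is that every object $X \in \ind(\cC)$ is by definition a direct limit (union) of its $\cC$-submodules, so I should define the tensor product $\tens$ on $\ind(\cC)$ by taking direct limits of the tensor products already defined on $\cC$. Concretely, for $X = \varinjlim W_i$ and $Y = \varinjlim W_j'$ with $W_i, W_j' \in \cC$, I would set $X \tens Y := \varinjlim (W_i \tens W_j')$, and I need to verify this is well-defined (independent of the choice of inductive systems) and lands in $\ind(\cC)$.

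**The key technical heart** is to show that this direct-limit tensor product satisfies the intertwining-operator universal property of \cite[Definition 4.15]{HLZ3}, since this is what makes the construction vertex-algebraically natural rather than merely categorical. Here assumption (5) is crucial: it guarantees that the image of any intertwining operator out of a product of $\cC$-modules stays inside $\cC$, so that intertwining operators among $\ind(\cC)$-objects can be analyzed "one $\cC$-piece at a time." First I would establish that for $X, Y \in \ind(\cC)$ and any target $Z \in \ind(\cC)$, an intertwining operator of type $\binom{Z}{X\,Y}$ is equivalent to a compatible system of intertwining operators of type $\binom{\im\cY_{ij}}{W_i\,W_j'}$ where each image lies in $\cC$ by (5). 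This reduces the universal property on $\ind(\cC)$ to the known universal property on $\cC$, and lets me define the canonical tensor product intertwining operator on $X \tens Y = \varinjlim(W_i \tens W_j')$ as the limit of the tensor product intertwining operators on each $W_i \tens W_j'$.

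**With the tensor product and its universal property in hand,** the remaining structural data—unit isomorphisms, associativity isomorphisms, braiding, and the twist—would all be obtained by taking direct limits of the corresponding structure isomorphisms on $\cC$. The key point is that these are natural isomorphisms on $\cC$, so they are compatible with the transition maps of inductive systems and hence pass to the limit; the coherence conditions (pentagon, hexagon, balancing axioms) then hold on $\ind(\cC)$ because they hold after restricting to any $\cC$-submodules and every object of $\ind(\cC)$ is a union of such. I would use assumptions (1)–(3) to ensure the setup is well-behaved: (1) gives that $V$ itself serves as the unit object, (2) ensures $\cC$-submodules behave well under the operations, and (3) (finite generation) is what guarantees that the relevant intertwining operators and morphisms are controlled by finitely many $\cC$-pieces, so the direct limits genuinely converge to honest generalized modules.

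**I expect the main obstacle** to be verifying that the direct-limit tensor product $X \tens Y$ is itself an object of $\ind(\cC)$—that is, that it is a genuine grading-restricted-in-the-limit generalized $V$-module which is the union of its $\cC$-submodules—and that the tensor product intertwining operator has the correct convergence and universal properties. The subtlety is that $X \tens Y$ a priori lives only in the ambient category of weak $V$-modules, and one must show the direct limit of the $W_i \tens W_j'$ does not acquire pathological behavior (e.g. failure of the lower-truncation or grading conditions). Assumption (5) is precisely what one leverages here: it forces images of intertwining operators to stay controllably inside $\cC$, which is what ultimately pins down $X \tens Y$ as a well-defined $\ind(\cC)$-object and ensures the universal property is inherited cleanly from $\cC$.
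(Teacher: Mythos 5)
Your overall architecture matches the paper's: define $X_1\widehat{\tens}X_2$ as a direct limit of tensor products of $\cC$-submodules, use assumption (5) to restrict an intertwining operator of type $\binom{X_3}{X_1\,X_2}$ to $\cC$-pieces whose images land in $\cC$, and thereby inherit the universal property from $\cC$; this is exactly how the paper proceeds (and the well-definedness worry you raise is resolved there by always using the canonical direct system of \emph{all} $\cC$-submodules of $X$, rather than an arbitrary presentation). However, there is a genuine gap at the step you dispatch with ``naturality makes the structure isomorphisms pass to the limit.'' That argument works for the braiding, where both $X_1\widehat{\tens}X_2$ and $X_2\widehat{\tens}X_1$ are single direct limits indexed by $I_{X_1}\times I_{X_2}$, but it fails as stated for associativity: the iterated product $X_1\widehat{\tens}(X_2\widehat{\tens}X_3)$ is a direct limit indexed by $I_{X_1}\times I_{X_2\widehat{\tens}X_3}$, and a $\cC$-submodule of $X_2\widehat{\tens}X_3$ need not be of the form $W_2\tens W_3$ (nor even an image of one), so the associativity isomorphisms $\cA_{W_1,W_2,W_3}$ of $\cC$ do not directly connect $X_1\widehat{\tens}(X_2\widehat{\tens}X_3)$ to $(X_1\widehat{\tens}X_2)\widehat{\tens}X_3$. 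What naturality gives you is an isomorphism between the two \emph{multiple} direct limits $\varinjlim\alpha_{X_1}\tens(\alpha_{X_2}\tens\alpha_{X_3})$ and $\varinjlim(\alpha_{X_1}\tens\alpha_{X_2})\tens\alpha_{X_3}$, which are a priori different objects from the iterated tensor products.

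The missing ingredient is a ``Fubini''-type theorem (Proposition \ref{prop:Fubini} in the paper) identifying each iterated direct limit with the corresponding multiple direct limit via comparison maps $T_{X_1,(X_2,X_3)}$ and $T_{(X_1,X_2),X_3}$. Proving these are isomorphisms is the technical heart of the construction and is where assumptions (2)--(4) actually earn their keep: injectivity requires that every $\cC$-submodule of a direct limit lie in the image of some $\phi_i$ and that kernels of the $\phi_i$ stabilize (Lemmas \ref{lem:im_f_in_im_phi_i} and \ref{lem:ker_fin_gen}, both resting on finite generation), while surjectivity requires right exactness of $W\tens\bullet$, which comes from the vertex tensor category structure on $\cC$. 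The coherence axioms are then not a formality either: the pentagon is proved by introducing quadruple-limit functors and showing two separate diagrams of $T$-maps commute. Your proposal, by contrast, locates the main obstacle in showing $X_1\widehat{\tens}X_2$ lies in $\ind(\cC)$ and in convergence of the limit -- but that part is essentially automatic (a direct limit of $\cC$-objects is a union of quotients of $\cC$-objects, hence in $\ind(\cC)$ by closure under quotients), and it misattributes the role of assumption (5), which is needed for the intertwining-operator universal property rather than for pinning down the object $X_1\widehat{\tens}X_2$ itself. So the proposal would need to be substantially supplemented at the associativity step to become a proof.
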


Much of the construction of the tensor category $\ind(\cC)$ in the proof of this theorem goes quite naturally. For example, we realize the tensor product $X_1\widehat{\tens}X_2$ of two direct limits in $\ind(\cC)$ as a direct limit of tensor products $W_1\tens W_2$, where $W_1\subseteq X_1$ and $W_2\subseteq X_2$ are $\cC$-submodules. The most difficult part of the construction is the associativity isomorphisms: since $X_1\widehat{\tens}(X_2\widehat{\tens} X_3)$ and $(X_1\widehat{\tens}X_2)\widehat{\tens}X_3$ are two different iterated direct limits, we need a kind of ``Fubini's Theorem'' (Proposition \ref{prop:Fubini} below) that shows both are isomorphic to a suitable multiple direct limit.

Although the conditions of Theorem \ref{thm:main_thm} may appear extensive, they hold in many examples. In particular, recent results in \cite{CJORY, CY} show that if the category of $C_1$-cofinite modules is closed under contragredient modules, then it has the vertex and braided tensor category structures of \cite{HLZ1}-\cite{HLZ8}. We verify the remaining conditions of Theorem \ref{thm:main_thm} in Theorem \ref{thm:ind_C_1_V} below to conclude:
\begin{thm}\label{thm:intro_ind_C_1_V}
 Suppose the category $\cC^1_V$ of $C_1$-cofinite grading-restricted generalized modules for a vertex operator algebra $V$ is closed under contragredient modules. Then $\cC^1_V$ satisfies the conditions of Theorem \ref{thm:main_thm}, so that $\ind(\cC^1_V)$ admits vertex and braided tensor category structures extending those on $\cC^1_V$.
\end{thm}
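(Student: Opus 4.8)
The plan is to verify the five hypotheses of Theorem \ref{thm:main_thm} for $\cC=\cC^1_V$, the most substantial being conditions (4) and (5). Condition (4) is exactly the conclusion of \cite{CJORY, CY}: under the standing assumption that $\cC^1_V$ is closed under contragredients, these references establish the $P(z)$-vertex and braided tensor category structures on $\cC^1_V$, so I would simply cite them. Condition (1) is immediate: $V$ is grading-restricted by the vertex operator algebra axioms, and it is $C_1$-cofinite because the creation property gives $u_{-1}\vac=u$ for every $u\in V_+$, so that $V_+\subseteq C_1(V)$ and $V/C_1(V)$ is a quotient of the finite-dimensional space $V_{(0)}$.

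For condition (2), closure under finite direct sums and quotients is routine, since $C_1$-cofiniteness and grading-restriction both pass to finite direct sums and to quotients (the natural map $W/C_1(W)\to\overline{W}/C_1(\overline{W})$ is surjective for any quotient $\overline{W}$ of $W$). Closure under submodules is where the contragredient hypothesis enters: given a submodule $U\subseteq W$ with $W\in\cC^1_V$, the inclusion dualizes to a surjection $W'\twoheadrightarrow U'$; since $W'$ lies in $\cC^1_V$ by hypothesis, $U'$ is $C_1$-cofinite as a quotient of $W'$, and then $U\cong (U')'$ is $C_1$-cofinite by the contragredient hypothesis again. For condition (3), I would invoke the standard fact that a lower-bounded generalized module $W$ with $\dim W/C_1(W)<\infty$ is finitely generated: homogeneous lifts $w_1,\dots,w_k$ of a basis of $W/C_1(W)$ generate $W$ by induction on conformal weight, since every homogeneous $w$ satisfies $w\equiv\sum_i c_i w_i\pmod{C_1(W)}$ and the elements of $C_1(W)$ of weight $\mathrm{wt}(w)$ are sums of terms $u_{-1}v$ with $u\in V_+$ and $\mathrm{wt}(v)<\mathrm{wt}(w)$.

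Condition (5) is the main obstacle. Here the essential input is that the image of an intertwining operator among $C_1$-cofinite modules is again $C_1$-cofinite: for $\cY$ of type $\binom{X}{W_1\,W_2}$ with $W_1,W_2\in\cC^1_V$, one shows $\dim\im\cY/C_1(\im\cY)<\infty$ by using the $C_1$-cofiniteness of $W_1$ and $W_2$ together with the $L(-1)$-derivative property and the iterate and commutator formulas for $\cY$ to reduce the coefficients of $\cY(w_1,x)w_2$, modulo $C_1(\im\cY)$, to a finite spanning set. This is the $C_1$-cofinite fusion estimate underlying the constructions of \cite{CJORY, CY}. Granting it, $\im\cY$ is $C_1$-cofinite; it is also lower-bounded, because the lower-truncation property of $\cY$ and the lower bounds on $W_1$ and $W_2$ bound the weights of the coefficients of $\cY(w_1,x)w_2$ from below. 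Hence $\im\cY$ is finitely generated by the lemma used for condition (3). Since $X$ is a direct limit, i.e. the directed union of its $\cC^1_V$-submodules, the finitely many generators of $\im\cY$ lie in a single such submodule $W\in\cC^1_V$, so that $\im\cY\subseteq W$; condition (2) then shows $\im\cY\in\cC^1_V$, completing the verification.

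The hard part is thus the $C_1$-cofiniteness of $\im\cY$ in condition (5): unlike the other conditions, which follow formally from the contragredient hypothesis and standard grading arguments, this requires genuine control of intertwining operators among $C_1$-cofinite modules, and it is precisely the estimate that makes $\cC^1_V$ closed under the tensor product. All the remaining conditions then reduce, via the finite-generation lemma and the direct-limit structure of objects of $\ind(\cC^1_V)$, to the already-established properties of $\cC^1_V$ itself.
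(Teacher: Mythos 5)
Your verifications of conditions (1)--(4) track the paper's proof of Theorem \ref{thm:ind_C_1_V} essentially verbatim (same creation-property argument for $V$, same contragredient-dualization argument for submodules, same finite-generation lemma, i.e.\ Proposition \ref{prop:C1-fin-gen}, and the same citation of \cite{CJORY,CY} for the tensor structure). The gap is in condition (5), precisely at the step you dispose of in one clause: the claim that $\im\cY$ has lower-bounded conformal weights ``because the lower-truncation property of $\cY$ and the lower bounds on $W_1$ and $W_2$ bound the weights of the coefficients of $\cY(w_1,x)w_2$ from below.'' Lower truncation is a \emph{pointwise} statement: for each fixed pair $(w_1,w_2)$ and each $h$, one has $(w_1)_{h+n,k}w_2=0$ for $n$ sufficiently large, with the threshold depending on $(w_1,w_2)$. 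The weight of $(w_1)_{h,k}w_2$ is $\mathrm{wt}\,w_1+\mathrm{wt}\,w_2-h-1$, so as $w_1,w_2$ range over the infinite-dimensional spaces $W_1,W_2$ these pointwise thresholds give no uniform lower bound on the weights occurring in $\im\cY$; nothing in the axioms prevents the per-pair bounds from drifting to $-\infty$ as $\mathrm{wt}\,w_1+\mathrm{wt}\,w_2$ grows. This is exactly the point the paper does \emph{not} treat as obvious: it proves an entire lemma (the one preceding Corollary \ref{cor:ImY_C1}) establishing that $\im\cY$ is $\NN$-gradable, and the proof genuinely uses $C_1$-cofiniteness of $W_1$ and finite generation of $W_2$ --- via the $L(0)$-conjugation formula on a finite-dimensional generating subspace, weak associativity to propagate the bound along $C_1$-generation of $W_1$, the commutator formula to propagate it along generation of $W_2$, and a filtration argument for general finitely generated $W_2$. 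Grading restriction of $W_1,W_2$ alone is not what makes this work.

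There is also a logical ordering problem entangled with this gap: your step (i), the Miyamoto-type estimate $\dim\im\cY/C_1(\im\cY)<\infty$, cannot be run before gradability is known. The Key Theorem of \cite{Mi} is an induction over an $\NN$-grading of the target, so the paper first establishes $\NN$-gradability of $\im\cY$ (the hard lemma), \emph{then} applies \cite{Mi} to get $C_1$-cofiniteness, and finally upgrades to grading-restriction via Proposition \ref{prop:C1N=C1mod}. Your concluding maneuver --- $\im\cY$ finitely generated, hence contained in a single $\cC^1_V$-submodule of $X$ since $X$ is a directed union, hence in $\cC^1_V$ by closure under submodules --- is a legitimate alternative to the paper's use of Proposition \ref{prop:C1N=C1mod} (at the mild cost of only covering $X\in\ind(\cC^1_V)$ rather than arbitrary generalized modules, which suffices for Theorem \ref{thm:main_thm}(5)). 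But everything upstream of it rests on the unproved lower-boundedness claim, so as written the verification of condition (5) is incomplete.
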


Once Theorems \ref{thm:main_thm} and \ref{thm:intro_ind_C_1_V} are established, we have applications to infinite-order extensions of $V$ in $\ind(\cC)$. These are vertex operator algebras $A$ which contain $V$ as a vertex operator subalgebra, that is, $V$ conformally embeds into $A$ since they share the same conformal vector. Although the extension results in \cite{HKL, CKM-ext} are stated for categories of generalized modules  with grading restrictions that modules in $\ind(\cC)$ almost never satisfy, these grading-restriction conditions are not used in essential ways. Thus we get the following two results almost immediately (see Theorems \ref{thm:ext_alg} and \ref{thm:Rep0A} below):

\begin{thm}\label{introthm:ext_alg}
 Let $(V,Y,\vac,\omega)$ be a vertex operator algebra and $\cC$ a category of grading-restricted generalized $V$-modules that satisfies the conditions of Theorem \ref{thm:main_thm}. Then the following two categories are isomorphic:
 \begin{enumerate}
  \item Vertex operator algebras $(A,Y,\vac_A,\omega_A)$ such that:
  \begin{itemize}
   \item $A$ is a $V$-module in $\ind(\cC)$ with vertex operator $Y_A: V\otimes A\rightarrow A((x))$,
   \item $Y_A(v,x)=Y(v_{-1}\vac_A,x)$ for $v\in V$, and
   \item $\omega_A=L(-2)\vac_A \quad (\,=\omega_{-1}\vac_A\,)$.
  \end{itemize}
\item Commutative associative algebras $(A,\mu_A,\iota_A)$ in the braided tensor category $\ind(\cC)$ such that $A$ is $\ZZ$-graded by $L(0)$-eigenvalues and satisfies the grading restriction conditions.
 \end{enumerate}
\end{thm}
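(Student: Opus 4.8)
The plan is to transport the Huang--Kirillov--Lepowsky correspondence \cite{HKL}, together with its refinement in \cite{CKM-ext}, into the ambient braided tensor category $\ind(\cC)$ furnished by Theorem \ref{thm:main_thm}. The engine of the correspondence is the universal property of the tensor product (\cite[Definition 4.15]{HLZ3}): for any object $A$ of $\ind(\cC)$, morphisms $\mu_A\colon A\widehat{\tens}A\to A$ in $\ind(\cC)$ are in natural bijection with intertwining operators of type $\binom{A}{A\,A}$. I would define the two functors on objects as follows. Given a vertex operator algebra $(A,Y,\vac_A,\omega_A)$ as in (1), the Jacobi identity together with the $V$-module structure $Y_A$ shows that the full vertex operator $Y\colon A\otimes A\to A((x))$ is an intertwining operator of type $\binom{A}{A\,A}$; the universal property then yields the multiplication $\mu_A$, while the conformal embedding $\iota_A\colon V\to A$, $v\mapsto v_{-1}\vac_A$, serves as the unit (recall $V$ is the unit object of $\ind(\cC)$). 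Conversely, from $(A,\mu_A,\iota_A)$ I recover the intertwining operator $\cY_A$ corresponding to $\mu_A$, set $Y(a,x)b:=\cY_A(a,x)b$, and take the $V$-action to be $Y_A(v,x):=Y(\iota_A(v),x)$.

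Next I would match the axioms on the two sides, exactly as in \cite{HKL, CKM-ext}. The creation property of $\vac_A$ together with the compatibility $Y_A(v,x)=Y(v_{-1}\vac_A,x)$ corresponds to the left and right unit axioms for $(\mu_A,\iota_A)$; the skew-symmetry $Y(a,x)b=e^{xL(-1)}Y(b,-x)a$ corresponds to the commutativity $\mu_A=\mu_A\circ\cR_{A,A}$; and the associativity of vertex operators (equality of products and iterates after analytic continuation) corresponds to the associativity constraint $\mu_A\circ(\Id_A\widehat{\tens}\mu_A)=\mu_A\circ(\mu_A\widehat{\tens}\Id_A)\circ\cA_{A,A,A}$. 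Here it is essential that, by the construction in the proof of Theorem \ref{thm:main_thm}, the braiding $\cR$ and associativity $\cA$ isomorphisms on $\ind(\cC)$ are the direct-limit extensions of those on $\cC$, so that they encode the same monodromy and analytic continuation of intertwining operators that appear in the vertex-algebraic identities.

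The main obstacle, and the only genuinely new point, is that the arguments of \cite{HKL, CKM-ext} are phrased under the standing hypothesis that every object of the ambient category is grading-restricted, whereas a typical object of $\ind(\cC)$ is not. I would dispose of this by reducing each step to a computation inside $\cC$. Since $A$ is the union of its $\cC$-submodules, any finite product or iterate of the operators $Y$ and $\cY_A$ applied to finitely many vectors of $A$ involves only finitely many $\cC$-submodules; by condition (5) of Theorem \ref{thm:main_thm} the images of the relevant intertwining operators again lie in $\cC$, so every such expression takes values in a single $\cC$-submodule. The convergence, analytic-continuation, and universal-property statements invoked at each step then hold by the corresponding grading-restricted results in $\cC$ guaranteed by condition (4). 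Thus the grading restrictions serve only to access facts now available in $\cC$, and no essential use is made of them.

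Finally I would check functoriality: a $V$-module map $f\colon A_1\to A_2$ intertwining the extended vertex operators corresponds, by naturality of the universal property, to a morphism commuting with $\mu$ and $\iota$, that is, to an algebra homomorphism, and conversely. Since the two object-level constructions are visibly mutually inverse (applying the universal property and then reading off $\cY_A$ returns the original $Y$, and $\iota_A$ returns $\vac_A$), this yields the asserted isomorphism of categories rather than merely an equivalence.
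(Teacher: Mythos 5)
Your proposal is correct and follows essentially the same route as the paper: the paper likewise transports \cite[Theorem 3.2]{HKL} into $\ind(\cC)$, defining $\iota_A(v)=v_{-1}\vac_A$ and obtaining $\mu_A$ from $Y$ via the universal property of $\widehat{\tens}$ (Theorem \ref{thm:indC_tens_prod_intw_op}(2)), then matching the unit, commutativity, and associativity axioms through the intertwining-operator description of the structure isomorphisms (Theorem \ref{thm:indC_vrtx_tens}). The only difference is presentational: the reduction of convergence and universal-property statements to $\cC$-submodules that you sketch inline is exactly what the paper has already packaged into those two theorems, so its proof of this statement simply cites them.
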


\begin{thm}\label{introthm:Rep0A}
 Let $V$ be a vertex operator algebra, $\cC$ a category of grading-restricted generalized $V$-modules that satisfies the conditions of Theorem \ref{thm:main_thm}, $A$ a vertex operator algebra that satisfies the conditions of Theorem \ref{introthm:ext_alg}(1), and $\rep^0 A$ the category of generalized $A$-modules $X$ which are objects of $\ind(\cC)$ as $V$-modules with respect to the vertex operator $Y_X(\iota_A(\cdot),x)$. Then $\rep^0 A$ has vertex and braided tensor category structures as described in Section \ref{sec:vertex_tensor}.
\end{thm}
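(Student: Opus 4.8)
The plan is to apply the categorical theory of commutative-algebra extensions, developed in \cite{HKL, CKM-ext} within the geometric framework of \cite{HLZ1}--\cite{HLZ8}, to the braided tensor category $\ind(\cC)$ in place of a category satisfying the HLZ grading-restriction axioms. By Theorem \ref{thm:main_thm}, $\ind(\cC)$ is a vertex and braided tensor category; by Theorem \ref{introthm:ext_alg}, the data of $A$ is precisely that of a commutative associative algebra $(A,\mu_A,\iota_A)$ in $\ind(\cC)$. The objects of $\rep^0 A$ are then exactly the \emph{local} (or \emph{dyslectic}) $A$-modules in $\ind(\cC)$: those $A$-modules $X$ for which the two composites $A\tens X\to X$ obtained with and without the double braiding agree, which is the categorical reformulation of the single-valuedness of the vertex operator $Y_X$.

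First I would establish the dictionary identifying a generalized $A$-module $X$ whose restriction along $\iota_A$ lies in $\ind(\cC)$ with an $A$-module in the category $\ind(\cC)$: the action map $\mu_X\colon A\tens X\to X$ is recovered from $Y_X$ via the $P(z)$-intertwining-map universal property of $\tens$, and the associativity and unit axioms for $\mu_X$ translate into the corresponding axioms for $Y_X$, exactly as in \cite{HKL, CKM-ext}. Second I would take the tensor product on $\rep^0 A$ to be the relative tensor product $M_1\tensA M_2$, realized as the cokernel in $\ind(\cC)$ of the difference of the two $A$-actions on $M_1\tens M_2$; this exists because $\ind(\cC)$ has cokernels and $\tens$ is right exact, and a short computation with the braiding shows that $M_1\tensA M_2$ is again local when $M_1,M_2$ are, so that $\rep^0 A$ is closed under $\tensA$. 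Third I would transport the unit, associativity, and braiding isomorphisms from $\ind(\cC)$ through the induction functor $\cF=A\tens(-)\colon\ind(\cC)\to\repA$, which is left adjoint to restriction; the coherence (pentagon and hexagon) axioms on $\rep^0 A$ then follow formally from those on $\ind(\cC)$ together with the commutativity of $A$. Finally I would match universal properties to identify $\tensA$ with the vertex-algebraic $P(z)$-tensor product of $A$-modules, so that the resulting structure is genuinely a vertex tensor category structure in the sense of Section \ref{sec:vertex_tensor} and not merely an abstract braided one.

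The hard part will be that the results of \cite{HKL, CKM-ext} are stated under grading-restriction hypotheses (lower-truncated, finite-dimensional generalized weight spaces) that objects of $\ind(\cC)$, and hence of $\rep^0 A$, essentially never satisfy, since a union of grading-restricted modules need not be grading-restricted. The key observation, as anticipated in the discussion preceding Theorem \ref{introthm:ext_alg}, is that these hypotheses enter the arguments of \cite{CKM-ext} only to guarantee the existence and good behavior of $P(z)$-tensor products and $P(z)$-intertwining maps, and the convergence of iterates of intertwining operators---all of which are now provided directly by the vertex tensor category structure on $\ind(\cC)$ furnished by Theorem \ref{thm:main_thm}. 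Accordingly, I would revisit each step above---the construction of $\tensA$, the monoidal structure of $\cF$, and the identification of local modules with honest $A$-modules---and check that grading restrictions are never used beyond what Theorem \ref{thm:main_thm} already supplies. I expect this verification to be the most delicate part of the argument, but conceptually routine: no new analytic input beyond Theorem \ref{thm:main_thm} is needed, and the algebraic skeleton of \cite{CKM-ext} carries over essentially verbatim.
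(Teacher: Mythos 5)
Your proposal is correct and follows essentially the same route as the paper: the paper's proof consists of observing that the grading-restriction hypotheses in \cite{CKM-ext} are never actually used in its arguments, so that \cite[Theorem 3.65]{CKM-ext} applies verbatim with $\cC$ replaced by the braided tensor category $\ind(\cC)$ supplied by Theorem \ref{thm:main_thm}. Your additional detail (local modules, the cokernel construction of $\tensA$, descent of structure isomorphisms, and matching the $P(z)$-universal properties) is simply an unpacking of what that citation contains, and your identification of the grading-restriction issue as the only delicate point is exactly the paper's own key observation.
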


 We aim to apply these results to two classes of vertex operator algebras whose representation categories are neither finite nor semisimple. The first class is affine vertex algebras and $W$-algebras at specific levels. Already the simple affine vertex algebra of $\mathfrak{sl}_2$ at non-integral admissible level admits uncountably many inequivalent simple modules, most of which fail to satisfy grading restrictions such as finite-dimensional conformal weight spaces and lower bounds on conformal weight. Such properties have prevented the construction of vertex tensor categories for these vertex operator algebras so far, except for the $\beta\gamma$-ghost vertex algebra \cite{AW} and the well-behaved subcategory of grading-restricted modules for many affine vertex algebras \cite{CHY, C, CY}. The second class of vertex operator algebras is those that still have uncountably many inequivalent simple modules, but these modules have finite-dimensional weight spaces and lower-bounded conformal weights. Prototypical examples are the singlet algebras \cite{Ad} and cosets of affine vertex algebras and $W$-algebras, such as the Heisenberg coset of the simple affine vertex algebra of $\mathfrak{sl}_2$ at admissible level \cite{ACR}.

We can now use the direct limit completion to study these non-finite, non-semisimple module categories. Concretely, the singlet algebra $\cM(p)$ for $p\geq 2$ is an infinite direct sum of $C_1$-cofinite modules for the Virasoro algebra at central charge $1-6(p-1)^2/p$, and the same is true for the triplet algebra $\cW(p)$ which at the same time is also an infinite direct sum of irreducible $\cM(p)$-modules. It turns out that all triplet modules lie in the direct limit completion of the category of $C_1$-cofinite Virasoro modules. Moreover, certain subregular $W$-algebras of type $A$, called $\mathcal B_p$ algebras, are infinite-order extensions of the tensor product of $\cM(p)$ with a rank-one Heisenberg  algebra.
Our main results allow us to study categories of $\mathcal B_p$- and $\cM(p)$-modules that lie in the direct limit completion of $C_1$-cofinite Virasoro modules (times Fock modules for the Heisenberg algebra in the $\mathcal B_p$ case).
In the singlet case, we will characterize this category thoroughly, that is, we will compute its fusion ring and prove rigidity, in \cite{CMY}.

In this paper, we present one detailed application in Example \ref{ex:svir}. Using coset constructions from  \cite{CGL}, we show that certain categories of modules for the $N=1$ super Virasoro algebra at generic central charge and for the affine vertex superalgebra of $\mathfrak{osp}(1|2)$ at generic level are equivalent to certain categories of $C_1$-cofinite modules for Virasoro vertex algebras.

The remainder of this paper is organized as follows. In Section \ref{sec:vertex_tensor}, we recall the definitions of various classes of module for a vertex operator algebra, the description of braided tensor categories of modules for a vertex operator algebra from \cite{HLZ1}-\cite{HLZ8}, and properties of the category of $C_1$-cofinite modules for a vertex operator algebra. In Section \ref{sec:ind_lim}, we recall the definition of direct limit in a category and discuss the existence and properties of direct limits in the category of weak modules for a vertex operator algebra. In Section \ref{sec:ind_comp}, we present the definition of direct limit completion $\ind(\cC)$ of a category of modules $\cC$ for a vertex operator algebra, and we show that $\ind(\cC)$ is a $\CC$-linear abelian category if $\cC$ is. In Section \ref{sec:ind_tens_cat}, we show that if $\cC$ is a braided tensor category, then under suitable conditions, $\ind(\cC)$ is also a braided tensor category with twist that contains $\cC$ as a braided tensor subcategory. In Section \ref{sec:ind_lim_vrtx_tens}, we show that if $\cC$ is a vertex tensor category, then under suitable conditions, $\ind(\cC)$ is also a vertex tensor category, with structure isomorphisms described as in Section \ref{sec:vertex_tensor}. Finally, in Section \ref{sec:exam}, we demonstrate sufficient conditions for the category of $C_1$-cofinite modules for a vertex operator algebra to satisfy the conditions of Theorem \ref{thm:main_thm}, and we apply these sufficient conditions to Virasoro and affine vertex operator algebras. We also explain the application of Theorem \ref{thm:main_thm} to vertex operator (super)algebra extensions and present several examples of extensions that can be studied using direct limit completions.

\section{Vertex tensor categories}\label{sec:vertex_tensor}

We use the definition of vertex operator algebra from \cite{FLM,LL}. We will work with the following categories of modules for a vertex operator algebra:
\begin{defi}
Let $V$ be a vertex operator algebra.
\begin{itemize}
\item A \textit{weak $V$-module} is a vector space $W$ equipped with a vertex operator
 \begin{align*}
  Y_W: V & \rightarrow (\Endo W)[[x,x^{-1}]]\nonumber\\
   v & \mapsto Y_W(v,x) = \sum_{n\in\ZZ} v_n\,x^{-n-1}
 \end{align*}
satisfying the following properties:
\begin{enumerate}
 \item \textit{Lower truncation}: for all $v\in V$ and $w\in W$, $v_n w=0$ for $n\in\NN$ sufficiently large.

 \item The \textit{vacuum property}: $Y_W(\vac,x) =\Id_W$.

 \item The \textit{Jacobi identity}: For $v_1,v_2\in V$,
 \begin{align*}
  x_0^{-1}\delta\left(\frac{x_1-x_2}{x_0}\right) Y_W(v_1,x_1)Y_W(v_2,x_2) & - x_0^{-1}\delta\left(\frac{x_2-x_1}{-x_0}\right) Y_W(v_2,x_2)Y_W(v_1,x_1)\nonumber\\
  & =x_2^{-1}\delta\left(\frac{x_1-x_0}{x_2}\right) Y_W(Y(v_1,x_0)v_2,x_2).
 \end{align*}

 \item The \textit{$L(-1)$-derivative property}: For $v\in V$,
 \begin{equation*}
  Y_W(L(-1)v,x)=\frac{d}{dx}Y_W(v,x).
 \end{equation*}
\end{enumerate}

\item A weak $V$-module $W$ is \textit{$\NN$-gradable} if there is an $\NN$-grading
\begin{equation*}
 W=\bigoplus_{n\in\NN} W(n)
\end{equation*}
such that for homogeneous $v\in V$, with conformal weight $\mathrm{wt}\,v$,
\begin{equation}\label{eqn:N-grading_cond}
 v_m W(n)\subseteq W(\mathrm{wt}\,v+n-m-1)
\end{equation}
for $m\in\ZZ$.

\item A \textit{generalized $V$-module} is a weak $V$-module that is graded by generalized $L(0)$-eigenvalues: $W=\bigoplus_{h\in\CC} W_{[h]}$ where
\begin{equation*}
 W_{[h]} =\lbrace w\in W\,\,\vert\,\,(L(0)-h)^N\cdot w = 0\,\,\text{for}\,\,N\in\NN\,\,\text{sufficiently large}\rbrace.
\end{equation*}

\item  A generalized $V$-module $W=\bigoplus_{h\in\CC} W_{[h]}$ is \textit{grading restricted} if:
 \begin{enumerate}
  \item For any $h\in\CC$, $W_{[h]}$ is finite dimensional.
  \item For any $h\in\CC$, $W_{[h+n]}=0$ for $n\in\ZZ$ sufficiently negative.
 \end{enumerate}
\end{itemize}
\end{defi}

 \begin{rem}
The $\NN$-grading on an $\NN$-gradable weak $V$-module need not be unique, and we do not treat such a grading as part of the data of a weak module. That is, $V$-homomorphisms between $\NN$-gradable weak $V$-modules need not preserve $\NN$-gradings.
 \end{rem}
\begin{rem}
We will sometimes use the term ``$V$-module'' as an abbreviation for ``grading-restricted generalized $V$-module.''
\end{rem}
\begin{rem}\label{rem:N-gradable}
 Any $V$-module is $\NN$-gradable. To see why, note that the second restriction on generalized $L(0)$-eigenspaces for a $V$-module $W$ implies that for any coset $\mu\in\CC/\ZZ$ such that $W_{[h]}\neq 0$ for some $h\in\mu$, there is some $h_\mu\in\mu$ with minimal real part such that $W_{[h_\mu]}\neq 0$. For a coset $\mu$ such that $W_{[h]}=0$ for all $h\in\mu$, we can pick $h_\mu\in\mu$ arbitrarily. Then $W=\bigoplus_{n\in\NN} W(n)$ where
 \begin{equation*}
  W(n)=\bigoplus_{\mu\in\CC/\ZZ} W_{[h_\mu+n]}
 \end{equation*}
for $n\in\NN$.
\end{rem}

We will consider categories of generalized $V$-modules that admit braided tensor category structure induced from vertex tensor category structure as in \cite{HLZ1}-\cite{HLZ8}. The tensor product bifunctor of such a braided tensor category is defined in terms of (logarithmic) intertwining operators among $V$-modules, whose definition we now recall:
\begin{defi}
 Let $W_1$, $W_2$, and $W_3$ be a triple of (weak) modules for a vertex operator algebra $V$. An \textit{intertwining operator} of type $\binom{W_3}{W_1\,W_2}$ is a linear map
 \begin{align*}
  \cY : W_1\otimes W_2 & \rightarrow W_3[\log x]\lbrace x\rbrace\nonumber\\
  w_1\otimes w_2 & \mapsto \cY(w_1,x)w_2 =\sum_{h\in\CC}\sum_{k\in\NN} (w_1)_{h,k}w_2\,x^{-h-1}(\log x)^k
 \end{align*}
that satisfies the following properties:
\begin{enumerate}
 \item Lower truncation: for any $h\in\CC$, $w_1\in W_1$, and $w_2\in W_2$,
 \begin{equation*}
  (w_1)_{h+n,k} w_2 = 0
 \end{equation*}
for $n\in\NN$ sufficiently large, independent of $k$.

\item The Jacobi identity: for $v\in V$ and $w_1\in W_1$,
\begin{align*}
 x_0^{-1}\delta\left(\frac{x_1-x_2}{x_0}\right) Y_{W_3}(v,x_1)\cY(w_1,x_2) & - x_0^{-1}\delta\left(\frac{x_2-x_1}{-x_0}\right) \cY(w_1,x_2)Y_{W_2}(v,x_1)\nonumber\\
  & =x_2^{-1}\delta\left(\frac{x_1-x_0}{x_2}\right) \cY(Y_{W_1}(v,x_0)w_1,x_2).
\end{align*}

\item The $L(-1)$-derivative property: For $v\in V$,
\begin{equation*}
 \cY(L(-1)v,x)=\dfrac{d}{dx}\cY(v,x).
\end{equation*}
\end{enumerate}
\end{defi}

For (weak) $V$-modules $W_1$, $W_2$, and $W_3$, we use $\cV^{W_3}_{W_1, W_2}$ to denote the vector space of intertwining operators of type $\binom{W_3}{W_1\,W_2}$. We say that an intertwining operator $\cY$ of type $\binom{W_3}{W_1\,W_2}$ is \textit{surjective} if
\begin{equation*}
 W_3=\text{span}\lbrace (w_1)_{h,k} w_2\,\vert\,w_1\in W_1, w_2\in W_2, h\in\CC,k\in\NN\rbrace.
\end{equation*}
We will use $\im\cY$ to denote the above span of the $(w_1)_{h,k} w_2$, so that $\cY$ is surjective if $W_3=\im\cY$.

We now discuss the braided tensor category structure on a category of generalized $V$-modules induced from vertex tensor category structure as in \cite{HLZ8}; see also the expositions in \cite{HL-review} and \cite{CKM-ext} for more details. The data of a vertex tensor category, as defined in \cite{HL-vrtx-tens}, includes a family of tensor product bifunctors parametrized by elements of the moduli space of Riemann spheres with two positively-oriented punctures, one negatively-oriented puncture, and local coordinates at the punctures. However, to describe the induced braided tensor category structure, we only need to use the $P(z)$-tensor products, where $P(z)$ is the sphere with positively-oriented punctures at $0$ and $z\in\CC^\times$, negatively-oriented puncture at $\infty$, and standard coordinates at each puncture. For any $z_1,z_2\in\CC^\times$, the $P(z_1)$- and $P(z_2)$-tensor products are isomorphic via natural parallel transport isomorphisms associated to homotopy classes of continuous paths in $\CC^\times$ from $z_1$ to $z_2$. For this reason, it is possible to choose, say, a $P(1)$-tensor product bifunctor $\tens$ and then use it to realize all other $P(z)$-tensor products. We shall implicitly do so here to simplify the discussion.

Suppose $\cC$ is a category of (not-necessarily-grading-restricted) generalized modules for a vertex operator algebra $V$ that admits the braided tensor category structure of \cite{HLZ8}.
\begin{itemize}
 \item If $W_1$ and $W_2$ are objects of $\cC$, the tensor product module $W_1\tens W_2$ is an object of $\cC$ characterized by a universal property: There is an intertwining operator $\cY_{W_1,W_2}$ of type $\binom{W_1\tens W_2}{W_1\,W_2}$ such that if $\cY$ is any intertwining operator of type $\binom{W_3}{W_1\,W_2}$ where $W_3$ is an object of $\cC$, then there is a unique $V$-module homomorphism
 \begin{equation*}
  f: W_1\tens W_2\rightarrow W_3
 \end{equation*}
such that $\cY=f\circ\cY_{W_1,W_2}$. In other words, the linear map
\begin{align*}
 \hom_V(W_1\tens W_2, W_3) & \rightarrow\cV_{W_1,W_2}^{W_3}\\
  f & \mapsto f\circ\cY_{W_1,W_2}
\end{align*}
is an isomorphism. It is not hard to see from the universal property of $(W_1\tens W_2,\cY_{W_1,W_2})$ that the intertwining operator $\cY_{W_1,W_2}$ must be surjective (see for instance \cite[Proposition 4.23]{HLZ3}).

\item For morphisms $f_1: W_1\rightarrow\til{W}_1$ and $f_2: W_2\rightarrow\til{W}_2$, the tensor product morphism
\begin{equation*}
 f_1\tens f_2: W_1\tens W_2\rightarrow\til{W}_1\tens\til{W}_2
\end{equation*}
is the unique $V$-module homomorphism guaranteed by the universal property of $(W_1\tens W_2,\cY_{W_1,W_2})$ such that
\begin{equation*}
\cY_{\til{W}_1,\til{W}_2}\circ(f_1\otimes f_2) =(f_1\tens f_2)\circ\cY_{W_1,W_2}.
\end{equation*}

\item The unit object of the braided tensor category $\cC$ is $V$, and for a generalized module $W$ in $\cC$, the left and right unit isomorphisms $l_W: V\tens W\rightarrow W$ and $r_W: W\tens V\rightarrow W$ satisfy
\begin{equation*}
 l_W(\cY_{V,W}(v,x)w)=Y_W(v,x)w
 \end{equation*}
 and
 \begin{equation*}
 r_W(\cY_{W,V}(w,x)v)=e^{xL(-1)} Y_W(v,-x)w
\end{equation*}
for $v\in V$, $w\in W$. Because $\cY_{V,W}$ and $\cY_{W,V}$ are surjective, $l_W$ and $r_W$ are completely determined by these relations.

\item For generalized modules $W_1$ and $W_2$ in $\cC$, the braiding isomorphism $\cR_{W_1,W_2}: W_1\tens W_2\rightarrow W_2\tens W_1$ is characterized by
\begin{equation*}
 \cR_{W_1,W_2}\left(\cY_{W_1,W_2}(w_1,x)w_2\right) =e^{xL(-1)}\cY_{W_2,W_1}(w_2,e^{\pi i}x)w_1
\end{equation*}
for $w_1\in W_1$, $w_2\in W_2$.

\item There is also a twist $\theta$ on $\cC$, a natural automorphism of the identity functor, given by $\theta_W=e^{2\pi iL(0)}$. The twist satisfies $\theta_V=\Id_V$ and the balancing equation
\begin{equation*}
 \theta_{W_1\tens W_2}=\cR_{W_2,W_1}\circ\cR_{W_1,W_2}\circ(\theta_{W_1}\tens\theta_{W_2})
\end{equation*}
for modules $W_1$, $W_2$ in $\cC$.
\end{itemize}

The description of the associativity isomorphisms in $\cC$ requires some preparation. First of all, the existence of associativity isomorphisms depends on, among other conditions, the convergence of products and iterates of intertwining operators among generalized modules in $\cC$. To explain what this means, we first define for any generalized $V$-module $W$ the \textit{graded dual} $W'=\bigoplus_{h\in\CC} W_{[h]}^*$ and the \textit{algebraic completion} $\overline{W}=\prod_{h\in\CC} W_{[h]}$. Note that there is an obvious embedding $\overline{W}\subseteq(W')^*$ (and this embedding is a linear isomorphism if and only if each $W_{[h]}$ is finite dimensional). For any $h\in\CC$, let $\pi_h: \overline{W}\rightarrow W_{[h]}$ denote the projection.

\begin{rem}
 If a generalized module $W$ satisfies the grading restriction condition that for any $h\in\CC$, $W_{[h+n]}=0$ for $n\in\ZZ$ sufficiently negative, then the graded dual $W'$ has the structure of a generalized $V$-module, called the \textit{contragredient} of $W$ (see \cite[Section 5]{FHL}).
\end{rem}

Now take intertwining operators $\cY_1$ of type $\binom{W_4}{W_1\,M_1}$ and $\cY_2$ of type $\binom{M_1}{W_2\,W_3}$ where $W_1$, $W_2$, $W_3$, $W_4$, and $M_1$ are generalized modules in $\cC$. We say that the product of $\cY_1$ and $\cY_2$ is convergent if for $w_1\in W_1$, $w_2\in W_2$, $w_3\in W_3$, and $z_1,z_2\in\CC^\times$ such that $\vert z_1\vert>\vert z_2\vert>0$, the series of linear functionals
\begin{equation*}
 \sum_{h\in\CC} \left\langle\cdot, \cY_1(w_1,z_1)\pi_h\left(\cY_2(w_2,z_2)w_3\right)\right\rangle
\end{equation*}
on $W_4'$ converges absolutely to an element of $\overline{W_4}$ (via the embedding of $\overline{W_4}$ into $(W_4')^*$). The substitution of non-zero complex numbers for formal variables in intertwining operators is accomplished using (any) choice of branches of logarithm. If the product of intertwining operators converges, we denote the limit of the series by
\begin{equation*}
 \cY_1(w_1,z_1)\cY_2(w_2,z_2)w_3\in\overline{W_4};
\end{equation*}
if we wish to emphasize that we are using the choices $\ell_1(z_1)$ and $\ell_2(z_2)$ of branch of logarithm, we denote the product by
\begin{equation*}
 \cY_1(w_1,e^{\ell_1(z_1)})\cY_2(w_2,e^{\ell_2(z_2)})w_3.
\end{equation*}
Similarly, the iterate of intertwining operators $\cY^1$ of type $\binom{W_4}{M_2\,W_3}$ and $\cY^2$ of type $\binom{M_2}{W_2\,W_3}$ converges if for $w_1\in W_1$, $w_2\in W_2$, $w_3\in W_3$, and $z_0,z_2\in\CC^\times$ such that $\vert z_2\vert>\vert z_0\vert>0$, the series of linear functionals
\begin{equation*}
\sum_{h\in\CC} \left\langle\cdot, \cY^1(\pi_h\left(\cY^2(w_1,z_0)w_2\right),z_2)w_3\right\rangle
\end{equation*}
on $W_4'$ converges absolutely to an element of $\overline{W_4}$.

The convergence of products and iterates of intertwining operators is essential for the existence of suitable associativity isomorphisms in $\cC$. In fact, when associativity isomorphisms exist, they are described as follows:
\begin{itemize}
 \item For generalized modules $W_1$, $W_2$, and $W_3$ in $\cC$, the associativity isomorphism
 \begin{equation*}
  \cA_{W_1,W_2,W_3}: W_1\tens(W_2\tens W_3)\rightarrow(W_1\tens W_2)\tens W_3
 \end{equation*}
is characterized by the equality
\begin{align*}
& \left\langle w', \overline{\cA_{W_1,W_2,W_3}}\left(\cY_{W_1,W_2\tens W_3}(w_1,e^{\ln r_1})\cY_{W_2,W_3}(w_2,e^{\ln r_2})w_3\right)\right\rangle\nonumber\\
&\hspace{5em} = \left\langle w',\cY_{W_1\tens W_2,W_3}\left(\cY_{W_1,W_2}(w_1,e^{\ln(r_1-r_2)})w_2),e^{\ln r_2}\right)w_3\right\rangle
\end{align*}
for any $w_1\in W_1$, $w_2\in W_2$, $w_3\in W_3$, $w'\in((W_1\tens W_2)\tens W_3)'$, and $r_1,r_2\in\RR$ such that $r_1>r_2>r_1-r_2>0$. Here, $\overline{\cA_{W_1,W_2,W_3}}$ is the obvious extension of $\cA_{W_1,W_2,W_3}$ to algebraic completions, and $\ln r$ for $r\in\RR_+$ is the real-valued branch of logarithm.
\end{itemize}

\begin{rem}
 As explained in \cite[Proposition 3.32]{CKM-ext}, the above relation for the associativity isomorphism $\cA_{W_1,W_2,W_3}$ holds for every $r_1,r_2\in\RR$ which satisfy $r_1>r_2>r_1-r_2>0$. Indeed, this relation shows that the multivalued analytic functions on $\vert z_1\vert>\vert z_2\vert>0$ and $\vert z_2\vert>\vert z_1-z_2\vert>0$ determined, respectively, by the product of $\cA_{W_1,W_2,W_3}\circ\cY_{W_1,W_2\tens W_3}$, $\cY_{W_2,W_3}$ and by the iterate of $\cY_{W_1\tens W_2,W_3}$, $\cY_{W_1,W_2}$ have equal restrictions to their common domain.
\end{rem}

\begin{rem}
For the full vertex tensor category structure on $\cC$, one can take the $P(z)$-tensor product $W_1\tens_{P(z)} W_2$ for each $z\in\CC^\times$ to be the module $W_1\tens W_2$, which satisfies a universal property with respect to the \textit{$P(z)$-intertwining map}
 \begin{equation*}
  \cY_{W_1,W_2}(\cdot,z)\cdot : W_1\otimes W_2\rightarrow\overline{W_1\tens W_2}.
 \end{equation*}
As before, the substitution $x\mapsto z$ is accomplished using a choice of branch for $\log z$.
\end{rem}

The question of whether a category $\cC$ of modules for a vertex operator admits braided tensor category structure as described above is usually rather difficult: the most difficult part is showing the existence of associativity isomorphisms. Perhaps the most natural category of grading-restricted generalized $V$-modules to consider for vertex tensor category structure is the category of $C_1$-cofinite modules:
\begin{defi}
 Let $V$ be a vertex operator algebra and $W$ an $\NN$-gradable weak $V$-module. Define the vector subspace
 \begin{equation*}
  C_1(W)=\text{span}\bigg\lbrace v_{-1} w\,\bigg\vert\,v\in\bigoplus_{n\geq 1} V_{(n)}, w\in W\bigg\rbrace.
 \end{equation*}
Then we say that $W$ is \textit{$C_1$-cofinite} if $\dim W/C_1(W)<\infty$. We use $\cC^1_V$ to denote the category of $C_1$-cofinite grading-restricted generalized $V$-modules.
\end{defi}

The category $\cC^1_V$ has good vertex algebraic properties: Huang showed in \cite{Hu_diff_eqs} that compositions of intertwining operators among $C_1$-cofinite modules satisfy regular-singular-point differential equations and hence are convergent, and in \cite{Mi}, Miyamoto showed (also using differential equations) that $C_1$-cofinite modules are closed under tensor products. However, it is not clear in general that $\cC^1_V$ has good algebraic properties: especially, it is not clear in general that $\cC^1_V$ is closed under submodules and contragredients. Nevertheless, recent results in \cite[Section 4]{CJORY} (see especially the proof of Theorem 4.2.5) and \cite[Section 3]{CY} (see especially the proof of Theorem 3.3.4) show that if $\cC^1_V$ is in fact closed under contragredients, then it admits braided tensor category structure as described in this section:
\begin{thm}\label{thm:C1_vrtx_tens}
 Let $V$ be a vertex operator algebra and assume that the category $\cC^1_V$ of $C_1$-cofinite grading-restricted generalized $V$-modules is closed under contragredients. Then $\cC^1_V$ admits the vertex and braided tensor category structures of \cite{HLZ1}-\cite{HLZ8}.
\end{thm}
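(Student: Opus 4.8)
The plan is to verify, under the standing hypothesis that $\cC^1_V$ is closed under contragredients, the full list of conditions required by the Huang--Lepowsky--Zhang construction of braided (and vertex) tensor category structure, following the arguments of \cite[Section 4]{CJORY} and \cite[Section 3]{CY}. These conditions split into two groups: the algebraic closure properties of the category, and the analytic convergence-and-extension properties of products and iterates of intertwining operators. Since the objects of $\cC^1_V$ are by definition grading-restricted generalized modules, the grading hypotheses of \cite{HLZ8} hold automatically, so only these two groups require attention.

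First I would establish the algebraic closure properties. Closure under finite direct sums and quotients is immediate from the definition of $C_1$-cofiniteness: for a surjection $W\twoheadrightarrow W/U$, the space $(W/U)/C_1(W/U)$ is a quotient of $W/C_1(W)$, hence finite dimensional. Closure under tensor products is Miyamoto's theorem \cite{Mi}. Closure under submodules, which is \emph{not} automatic, I would deduce from the contragredient hypothesis by duality: given a submodule $U\subseteq W$ with $W$ in $\cC^1_V$, one checks first that $U$ is grading restricted (its graded pieces sit inside those of $W$), and then that the graded dual of the inclusion is a surjection $W'\twoheadrightarrow U'$. Since $W'$ lies in $\cC^1_V$ by hypothesis and quotients are $C_1$-cofinite, $U'$ is $C_1$-cofinite, hence lies in $\cC^1_V$; applying the contragredient hypothesis once more gives $U\cong U''$ in $\cC^1_V$. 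This yields a $\CC$-linear abelian category closed under all the operations the theory requires.

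Next I would supply the analytic input. The essential point is Huang's theorem \cite{Hu_diff_eqs} that matrix coefficients of products and iterates of intertwining operators among $C_1$-cofinite modules satisfy systems of ordinary differential equations with regular singular points. This at once yields the absolute convergence of the products and iterates described in Section \ref{sec:vertex_tensor}, and, more importantly, the convergence-and-extension property: the multivalued analytic function determined by a product of intertwining operators on $\vert z_1\vert>\vert z_2\vert>0$ continues to the region $\vert z_2\vert>\vert z_1-z_2\vert>0$ and there agrees with a function arising from an iterate. With convergence established, the $P(z)$-tensor products are produced by the general HLZ machinery and land back in $\cC^1_V$ by the closure properties just obtained.

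The main obstacle is the construction of the associativity isomorphisms, and this is precisely where the convergence-and-extension property is needed: it is what allows the product of $\cY_{W_1,W_2\tens W_3}$ and $\cY_{W_2,W_3}$ to be identified, via analytic continuation across the two regions above, with the iterate of $\cY_{W_1\tens W_2,W_3}$ and $\cY_{W_1,W_2}$, thereby producing the isomorphism $\cA_{W_1,W_2,W_3}$ characterized in Section \ref{sec:vertex_tensor}. Once associativity is in hand, the braiding, twist, and unit isomorphisms are defined by the explicit formulas recorded above, and the coherence axioms (pentagon, hexagon, triangle) follow from the general results of \cite{HLZ8}. Assembling these verifications, exactly as carried out in \cite[proof of Theorem 4.2.5]{CJORY} and \cite[proof of Theorem 3.3.4]{CY}, completes the proof.
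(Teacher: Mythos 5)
Your proposal is correct and takes essentially the same approach as the paper: the paper offers no independent proof of this theorem, but justifies it precisely by the verifications in \cite[proof of Theorem 4.2.5]{CJORY} and \cite[proof of Theorem 3.3.4]{CY} that you outline — algebraic closure properties (with the same contragredient-duality trick for submodules that the paper itself uses in proving Theorem \ref{thm:ind_C_1_V}), Miyamoto's theorem \cite{Mi} for $C_1$-cofiniteness of tensor products, and Huang's regular-singular-point differential equations \cite{Hu_diff_eqs} for the convergence and extension properties underlying the associativity isomorphisms. The one place to be careful if you were to write this out in full is the existence of the $P(z)$-tensor product inside $\cC^1_V$, which requires applying Miyamoto's Key Theorem to the HLZ double-dual construction rather than following formally from the closure properties, but this is exactly the content of the cited proofs you defer to.
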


We end this section with some basic results on $C_1$-cofinite modules that we will use later:
\begin{prop}\label{prop:C1-fin-gen}
Any $C_1$-cofinite $\NN$-gradable weak $V$-module is finitely generated.
\end{prop}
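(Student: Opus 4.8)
The plan is to exploit the $\NN$-grading together with the behavior of $C_1(W)$ under that grading. First I would choose finitely many homogeneous elements $w_1,\ldots,w_k\in W$ whose images form a basis of the finite-dimensional quotient $W/C_1(W)$; this is possible once one checks that $C_1(W)$ is a graded subspace of $W$. Let $U\subseteq W$ be the submodule generated by $w_1,\ldots,w_k$. The goal is then to show $U=W$, equivalently that $W(n)\subseteq U$ for every $n\in\NN$, which I would establish by induction on $n$.

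The key observation driving the induction is that the operators $v_{-1}$ with $v$ of positive conformal weight strictly raise the $\NN$-grading degree. Indeed, the grading condition $v_m W(n)\subseteq W(\mathrm{wt}\,v+n-m-1)$ gives, for $m=-1$ and homogeneous $v$ with $\mathrm{wt}\,v\geq 1$, the inclusion $v_{-1}W(n)\subseteq W(\mathrm{wt}\,v+n)$ with $\mathrm{wt}\,v+n>n$. In particular $C_1(W)$ is spanned by homogeneous elements of the form $v_{-1}u$, so it is a graded subspace (justifying the choice of homogeneous representatives above), and any homogeneous element of $C_1(W)$ lying in degree $n$ is a sum of terms $v_{-1}u$ with $u$ homogeneous of degree strictly less than $n$.

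With this in hand the induction is straightforward. For the base case $n=0$, one has $C_1(W)\cap W(0)=0$, since a nonzero homogeneous $v_{-1}u$ of degree $0$ would force $\deg u=-\mathrm{wt}\,v<0$; hence $W(0)$ is spanned by those $w_i$ of degree $0$ and lies in $U$. For the inductive step, take homogeneous $w\in W(n)$. Since the images of the $w_i$ span $W/C_1(W)$ and everything is graded, I can subtract a suitable linear combination of the degree-$n$ generators to write $w=\sum_i\alpha_i w_i+c$ with $c\in C_1(W)\cap W(n)$. Expressing $c$ as a sum of homogeneous terms $v_{-1}u$ with $\deg u<n$, the induction hypothesis gives $u\in U$, whence $v_{-1}u\in U$ because $U$ is a submodule; thus $c\in U$ and therefore $w\in U$.

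I expect no serious obstacle: the argument is essentially bookkeeping with the grading. The one point requiring genuine care is verifying that $C_1(W)$ is a graded subspace and that $v_{-1}$ (for $v$ of positive weight) shifts the degree strictly upward, since this is precisely what makes the induction run in the correct downward direction and lets the finite set of representatives of $W/C_1(W)$ control the entire module.
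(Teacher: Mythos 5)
Your proof is correct, and its inductive engine is the same as the paper's: both arguments induct on the $\NN$-degree, using that $v_{-1}$ for homogeneous $v$ of positive weight raises degree by $\mathrm{wt}\,v>0$, so that a homogeneous degree-$n$ element of $C_1(W)$ is a sum of terms $v_{-1}u$ with $\deg u<n$, and the inductive hypothesis plus closure of the generated submodule under $v_{-1}$ finishes the step. The one genuine difference is the base case. The paper does not observe that $C_1(W)\cap W(0)=0$; instead it invokes \cite[Lemma 6]{Mi} to know that $W(0)$ is finite dimensional and then enlarges its generating subspace $T$ (a finite-dimensional graded subspace with $W=T+C_1(W)$) so that $W(0)\subseteq T$, making the base case true by construction. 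Your route — noting that every homogeneous spanning vector of $C_1(W)$ has degree at least $1$, so $W(0)$ injects into $W/C_1(W)$ and is therefore spanned by your degree-zero representatives — handles the base case with no external input, and it recovers the finite-dimensionality of $W(0)$ as a byproduct rather than assuming it. So your version is self-contained where the paper's leans on Miyamoto's lemma; beyond that, the two proofs are the same bookkeeping with the grading.
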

\begin{proof}
 Suppose $W=\bigoplus_{n\in\NN} W(n)$ is a $C_1$-cofinite $\NN$-gradable weak $V$-module and let $T$ be a finite-dimensional subspace of $W$ such $W=T+C_1(W)$. We may assume that $T$ is graded, and moreover since $W(0)$ is finite dimensional by \cite[Lemma 6]{Mi}, we may assume that $T$ contains $W(0)$. We will show that $W$ is generated by $T$, that is, $W=\langle T\rangle$.

 We prove that $W(n)\subseteq\langle T\rangle$ by induction on $n$ with the base case $n=0$ handled by the assumption $W(0)\subseteq T$. So take $n>0$ and assume $W(m)\subseteq\langle T\rangle$ for $m<n$. For any $w\in W(n)$, we have
 \begin{equation}\label{eqn:C1-fin-gen}
  w=w'+\sum_i v^{(i)}_{-1} w^{(i)}
 \end{equation}
for $w'\in T$, $v^{(i)}\in V$, and $w^{(i)}\in W$. Since $w$ is homogeneous and $T$ is graded, we may assume that $w'\in T\cap W(n)$ and also that each $v^{(i)}$ is homogeneous (of positive weight). Then by \eqref{eqn:N-grading_cond}, the $w^{(i)}$ are homogeneous in the $\mathbb{N}$-grading of $W$ with degree
\begin{equation*}
 \deg w^{(i)} =n-(\mathrm{wt}\,v^{(i)}-(-1)-1)< n,
\end{equation*}
so each $w^{(i)}\in\langle T\rangle$ by induction. Then \eqref{eqn:C1-fin-gen} shows $w\in\langle T\rangle$ as well.
\end{proof}

\begin{prop}\label{prop:C1N=C1mod}
 The category of $C_1$-cofinite $\NN$-gradable weak $V$-modules equals $\cC^1_V$.
\end{prop}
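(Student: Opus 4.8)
The plan is to prove the two object-inclusions separately, first recording two reductions. Since $C_1(W)$ is defined purely in terms of the modes $v_{-1}$, the $C_1$-cofiniteness condition $\dim W/C_1(W)<\infty$ is a property of the underlying weak module, independent of any choice of grading; and by Remark \ref{rem:N-gradable} the two categories have the same morphisms, namely weak $V$-module homomorphisms, which need not respect an $\NN$-grading. The inclusion $\cC^1_V\subseteq\{C_1\text{-cofinite }\NN\text{-gradable weak }V\text{-modules}\}$ is then immediate: by Remark \ref{rem:N-gradable} every grading-restricted generalized module is $\NN$-gradable, and the $C_1$-cofiniteness transfers verbatim. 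Thus the content is the reverse inclusion, so I would fix a $C_1$-cofinite $\NN$-gradable weak module $W=\bigoplus_{n\in\NN}W(n)$ and show it is a grading-restricted generalized module.

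First I would show that each $W(n)$ is finite-dimensional. Reusing the computation in the proof of Proposition \ref{prop:C1-fin-gen}, write $W=T+C_1(W)$ with $T$ a finite-dimensional graded subspace containing $W(0)$; taking homogeneous components of an expression $w=w'+\sum_i v^{(i)}_{-1}w^{(i)}$ and applying \eqref{eqn:N-grading_cond} (so that $v_{-1}$ raises $\NN$-degree by $\mathrm{wt}\,v$) yields the recursion
\[
 W(n)=(T\cap W(n))+\sum_{k=1}^{n}\text{span}\{v_{-1}w : v\in V_{(k)},\ w\in W(n-k)\}.
\]
Since $\dim V_{(k)}<\infty$ and $T\cap W(n)$ is finite-dimensional, finite-dimensionality of $W(n)$ follows by induction on $n$, the base case $W(0)$ being finite-dimensional by \cite[Lemma 6]{Mi}.

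Next, taking $v=\omega$ in \eqref{eqn:N-grading_cond} shows $L(0)=\omega_1$ preserves each $W(n)$; as $W(n)$ is finite-dimensional, $L(0)$ acts locally finitely with a generalized eigenspace decomposition on each $W(n)$, so $W=\bigoplus_{h\in\CC}W_{[h]}$ with $W_{[h]}=\bigoplus_n\bigl(W(n)\cap W_{[h]}\bigr)$ is a generalized $V$-module. It then remains to verify the grading restrictions, and this is where the real work lies: I must control how the $L(0)$-eigenvalues sit relative to the $\NN$-degree. Using $[L(0),v_{-1}]=(\mathrm{wt}\,v)\,v_{-1}$, the recursion above shows that every generalized $L(0)$-eigenvalue occurring in $W(n)$ is obtained from one occurring in $T$ by adding a sum of positive integers (the weights of the applied $v_{-1}$) totalling at least $n-N_0$, where $N_0$ is the top $\NN$-degree of $T$. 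Hence the real parts of the conformal weights in $W(n)$ are bounded below by $h_{\min}$ for all $n$, and by $h_{\min}+(n-N_0)$ for $n>N_0$, where $h_{\min}$ is the least real part of an $L(0)$-eigenvalue on $T$. The linear growth forces each fixed $W_{[h]}$ to receive contributions from only finitely many $W(n)$, whence $\dim W_{[h]}<\infty$; and the uniform lower bound $h_{\min}$ gives $W_{[h+n]}=0$ for $n$ sufficiently negative. Thus $W$ is grading-restricted, completing the reverse inclusion.

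The main obstacle is this final step: establishing grading restriction requires matching the abstract $\NN$-grading against the intrinsic $L(0)$-grading. The key mechanism is the compatibility of the two degree shifts induced by $v_{-1}$, namely $+\mathrm{wt}\,v$ in the $\NN$-grading via \eqref{eqn:N-grading_cond} and $+\mathrm{wt}\,v$ in the $L(0)$-eigenvalue via $[L(0),v_{-1}]=(\mathrm{wt}\,v)v_{-1}$, together with the fact that the $C_1$-spanning recursion involves only the raising operators $v_{-1}$ with $\mathrm{wt}\,v\geq 1$. Everything else is a routine induction.
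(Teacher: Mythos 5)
Your proof is correct, but it implements the crucial grading-restriction step by a different mechanism than the paper. The paper first quotes \cite[Lemma 6]{Mi} for finite-dimensionality of \emph{every} $W(n)$ and Proposition \ref{prop:C1-fin-gen} to conclude that $W$ is generated by $T=\bigoplus_{n=0}^{N}W(n)$; it then uses the spanning result \cite[Proposition 4.5.6]{LL}, which writes $W(n)$ for $n>N$ in terms of arbitrary modes $v_m w'$ with $w'\in T$, and matches the common shift $\mathrm{wt}\,v-m-1=n-\deg w'>0$ of the two gradings. You instead stay entirely inside the $C_1(W)$-recursion
\[
 W(n)=(T\cap W(n))+\sum_{k=1}^{n}\mathrm{span}\{v_{-1}w : v\in V_{(k)},\ w\in W(n-k)\},
\]
using only the modes $v_{-1}$ of positive-weight vectors, the commutator $[L(0),v_{-1}]=(\mathrm{wt}\,v)\,v_{-1}$, and induction on $\NN$-degree. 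This buys self-containedness: you need \cite[Lemma 6]{Mi} only for $W(0)$ (finite-dimensionality of the higher $W(n)$ falls out of the same induction, since each $V_{(k)}$ is finite-dimensional), you never invoke \cite[Proposition 4.5.6]{LL}, and you use only the internal computation of Proposition \ref{prop:C1-fin-gen} rather than its finite-generation conclusion. What the paper's route buys is brevity: by quoting the stronger external results, the weight bookkeeping collapses to a two-line comparison of $\mathrm{wt}\,v_m w'$ with $\deg v_m w'$. The phenomenon exploited is identical in both arguments — the operators in the spanning sets raise the $\NN$-degree and the generalized $L(0)$-eigenvalue by the same positive integer, so conformal weights grow linearly in $\NN$-degree, which yields both grading-restriction conditions — and your inductive bound $\mathrm{Re}$(weight) $\geq h_{\min}+(n-N_0)$ on $W(n)$ is the precise analogue of the paper's conclusion that $W_{[h_i+m]}\subseteq\bigoplus_{n=0}^{N+m}W(n)$.
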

\begin{proof}
 Remark \ref{rem:N-gradable} shows that every module in $\cC^1_V$ is $\NN$-gradable. Conversely, if $W=\bigoplus_{n\in\NN} W(n)$ is a $C_1$-cofinite $\NN$-gradable weak $V$-module, \cite[Lemma 6]{Mi} shows that every $W(n)$ is finite dimensional. Since $W(n)$ is also $L(0)$-invariant, each $W(n)$ decomposes as the direct sum of $L(0)$-generalized eigenspaces, so $W=\bigoplus_{h\in\CC} W_{[h]}$ where $W_{[h]}$ is the $L(0)$-generalized eigenspace with generalized eigenvalue $h$.

 Now Proposition \ref{prop:C1-fin-gen} shows that $W$ is generated by the finite-dimensional subspace $T=\bigoplus_{n=0}^N W(n)$ for some sufficiently large $N$. Let $\lbrace h_i\rbrace_{i=1}^I$ be the finitely many generalized eigenvalues for $L(0)$ acting on $T$. Since $T$ generates $W$, $W(n)$ for $n>N$ is spanned by vectors of the form $v_m w'$ for $v\in V$ and $w'\in T$ (see \cite[Proposition 4.5.6]{LL}). We may assume that $v$ is $L(0)$-homogeneous and that $w'$ is both $L(0)$-homogeneous (with generalized $L(0)$-eigenvalue $h_i$) and homogeneous in the $\mathbb{N}$-gradation of $W$ (with degree $\deg w'$). Such a vector $v_m w'$ has generalized $L(0)$-eigenvalue
 \begin{equation}\label{eqn:vmw'_conf_wt}
  \mathrm{wt}\,v_m w' =h_i+\mathrm{wt}\,v-m-1
 \end{equation}
and $\mathbb{N}$-degree
\begin{equation*}
 \deg v_m w' =n =\deg w'+\mathrm{wt}\,v-m-1,
\end{equation*}
so that in particular $\mathrm{wt}\,v-m-1=n-\deg w'$. Thus
\begin{equation*}
0< \mathrm{wt}\,v-m-1\leq n,
\end{equation*}
and combined with \eqref{eqn:vmw'_conf_wt}, this shows that the generalized eigenvalues of $L(0)$ acting on $W(n)$ for $n>N$ have the form $h_i+k$ for $0<k\leq n$. In particular, all generalized $L(0)$-eigenvalues on $W$ are contained in
$\bigcup_{i=1}^I \lbrace h_i+\NN\rbrace$, and moreover, any $W_{[h_i+m]}$ is contained in the finite-dimensional subspace $\bigoplus_{n=0}^{N+m} W(n)$. This proves the grading-restriction conditions showing that $W$ is a grading-restricted generalized $V$-module.
\end{proof}

Finally, the following results are key for applying our main results on direct limit completions to $\cC^1_V$:
\begin{lem}
 Suppose $W_1$ is a $V$-module in $C^1_V$, $W_2$ is a finitely-generated grading-restricted generalized $V$-module, $X$ is a generalized $V$-module, and $\cY$ is an intertwining operator of type $\binom{X}{W_1\,W_2}$. Then $\im\cY$ is $\NN$-gradable.
\end{lem}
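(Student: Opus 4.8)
The plan is to reduce the statement first to a lower bound on conformal weights, and then to a uniform upper bound on the exponents appearing in $\cY$. I would begin by observing that $\im\cY$ is a $V$-submodule of $X$ that is graded by generalized $L(0)$-eigenvalues: it is spanned by the homogeneous vectors $(w_1)_{h,k}w_2$, and it is stable under $V$ because the commutator formula coming from the Jacobi identity rewrites $v_m(w_1)_{h,k}w_2$ again in terms of such vectors. By the construction in Remark \ref{rem:N-gradable}—whose only input is that the generalized $L(0)$-eigenvalues are bounded below in real part within each coset of $\CC/\ZZ$, and not the finite-dimensionality of the eigenspaces—it then suffices to prove such a lower bound. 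Since a nonzero coefficient $(w_1)_{h,k}w_2$ with homogeneous $w_1\in(W_1)_{[h_1]}$, $w_2\in(W_2)_{[h_2]}$ lies in $X_{[h_1+h_2-h-1]}$, and since $W_1$ and $W_2$ are finitely generated and grading restricted—so their weights lie in finitely many cosets and are bounded below, giving global bounds $\mathrm{Re}(h_1)\geq N_1$, $\mathrm{Re}(h_2)\geq N_2$—this reduces to a single global \emph{upper} bound on $\mathrm{Re}(h)$ over all exponents $h$ with $(w_1)_{h,k}w_2\neq0$.

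Next I would prove the sharper quantitative claim that there is a constant $C$ with $\mathrm{Re}(h)\leq\deg w_1+\deg w_2+C$ for all homogeneous $w_1\in W_1$, $w_2\in W_2$ and all exponents $h$ with $(w_1)_{h,k}w_2\neq0$, where $\deg$ refers to fixed $\NN$-gradings of $W_1$ and $W_2$. The engine is the iterate (associator) and commutator formulas for intertwining operators. Writing $W_1=T_1+C_1(W_1)$ with $T_1$ a finite-dimensional graded generating subspace as in Proposition \ref{prop:C1-fin-gen}, the iterate formula rewrites $\cY(v_{-1}u,x)w_2$, for $v$ of positive weight, as a sum of terms $v_{-1-i}x^{i}\cY(u,x)w_2$ and $\cY(u,x)(v_iw_2)x^{-i-1}$; a direct bookkeeping of the $x$-powers shows that each such term respects the proposed bound exactly, while strictly lowering $\deg$ of the first slot. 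Symmetrically, writing $w_2=\sum v_mw''$ with $\deg w''<\deg w_2$ (possible once $\deg w_2$ exceeds the degrees of a fixed finite generating set for $W_2$, after a standard reordering of modes), the commutator formula rewrites $\cY(w_1,x)(v_mw'')$ in terms of $v_m\cY(w_1,x)w''$ and $x^{m-i}\cY(v_iw_1,x)w''$, again preserving the bound exactly while strictly lowering $\deg$ of the second slot.

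Iterating these reductions is meant to reduce the claim to finitely many ``seed'' operators $\cY(t,x)w_2$ with $t\in T_1$ and $w_2$ in one of the finitely many finite-dimensional spaces $W_2(q)$ with $q$ below a fixed bound. For such a seed the needed bound is where $C_1$-cofiniteness of $W_1$ enters essentially: using that compositions of intertwining operators among $C_1$-cofinite modules satisfy differential equations with regular singular points (or, alternatively, a further reduction of the first slot in the spirit of Proposition \ref{prop:C1-fin-gen}), one shows that the exponents $h$ of $\cY(t,x)w_2$ occupy only finitely many cosets of $\CC/\ZZ$. Combined with the lower-truncation bound within each coset, this gives $\sup\mathrm{Re}(h)<\infty$ for each of the finitely many seeds, and hence a value for $C$.

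I expect the main obstacle to be precisely this control of the exponents, namely the last paragraph together with the termination of the reductions in the second paragraph. Two points are delicate. First, the two reductions interfere: reducing the first slot via $C_1$-cofiniteness can raise the degree of the second slot, and reducing the second slot can raise the degree of the first, so one needs a carefully chosen well-founded order (or must run the two reductions on separate, non-interfering measures) to guarantee that only finitely many seeds actually occur. Second, because the ambient module $X$ is \emph{not} grading restricted, there is no a priori lower bound on conformal weights coming from $X$ itself; the entire bound must be produced from the intertwining operator, and this is exactly where the $C_1$-cofiniteness of $W_1$—rather than the mere finite generation of $W_2$—is indispensable, since it is what forces the exponents into finitely many cosets.
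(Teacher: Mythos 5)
Your overall frame---reduce $\NN$-gradability to lower bounds on conformal weights via Remark \ref{rem:N-gradable}, then run slot-by-slot reductions powered by $C_1$-cofiniteness of $W_1$ and finite generation of $W_2$---is the same as the paper's, but the two points you yourself flag as ``delicate'' are genuine gaps, and the devices you propose do not close them. First, termination. Your first-slot identity $\cY(v_{-1}u,x)w_2=\sum_{i\geq 0}x^{i}v_{-1-i}\cY(u,x)w_2\pm\sum_{i\geq 0}x^{-i-1}\cY(u,x)(v_iw_2)$ pushes modes onto the second slot, while your second-slot (commutator) reduction pushes modes onto the first; after one pass of each you are back at a completely general pair, and the natural measures fail: $\deg w_1+\deg w_2$ strictly decreases under your first reduction but increases under the term $x^{m-i}\cY(v_iw_1,x)w''$ of the second whenever $i<m$, and a lexicographic order fails in the other direction because your first reduction raises the second-slot degree from $0$. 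The paper's resolution is not a cleverer ordering but a different identity: pin the second slot in $W_2(0)$ and use weak associativity multiplied by $(x_0+x_2)^{\mathrm{wt}\,v}$, which is available precisely because $v_iu_2=0$ for $i\geq\mathrm{wt}\,v$ when $u_2\in W_2(0)$. This rewrites $\cY(v_{-1}u_1,x)u_2$ using only strictly lower-degree first slots $v_{-1+i}u_1$ ($1\leq i\leq \mathrm{wt}\,v$) and modes $v_{\mathrm{wt}\,v-1-i}$, $i\geq 0$, of non-negative degree acting on the output, with the second slot never touched. The proof then nests as: (i) induct on the first-slot degree against $W_2(0)$; (ii) extend to all of $W_2$ (when generated by $W_2(0)$) in a \emph{single} step, using the spanning set $\{v_nu_2:u_2\in W_2(0),\ \mathrm{wt}\,v-n-1\geq 0\}$ of \cite[Proposition 4.5.6]{LL} and the commutator formula---the modes this pushes onto the first slot are harmless because (i) already covers every first slot, so there is no feedback loop; (iii) a filtration induction for general finitely generated $W_2$, a case your proposal does not address at all.

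Second, your seed case fails. By insisting on a single global upper bound on $\mathrm{Re}(h)$ you obligate yourself to prove that the exponents of $\cY(t,x)w_2$ occupy finitely many cosets of $\CC/\ZZ$, and the tools you cite cannot deliver this here: Huang's regular-singular-point differential equations \cite{Hu_diff_eqs} are for compositions of intertwining operators \emph{among $C_1$-cofinite modules}, and Miyamoto's theorem \cite{Mi} assumes an $\NN$-gradable target, whereas the target $X$ is an arbitrary generalized module---removing exactly that hypothesis is the content of this lemma, so the appeal is circular (your parenthetical alternative, further reduction of the first slot, constrains nothing about which cosets of exponents occur). Your closing assertion that $C_1$-cofiniteness is what ``forces the exponents into finitely many cosets'' is also off: nothing forces this at this stage, and nothing needs to. The fix is that the global bound is never required: Remark \ref{rem:N-gradable} needs only a lower bound on weights \emph{within each coset separately}, and since all reductions multiply by integer powers of $x$ they preserve the coset of the exponent, so the entire induction can be run coset by coset. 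At the seeds the coset-wise bound is then immediate from lower truncation of $\cY$ plus finite-dimensionality of $T_1\otimes W_2(0)$ (maximize over a basis, using $L(0)$-conjugation to convert exponent bounds into weight bounds)---no differential equations and no coset-finiteness. This is exactly the paper's subspace $\widetilde{X}=\bigoplus_{\mu\in\CC/\ZZ}\bigoplus_{n\in\NN}X_{[h_\mu+n]}$, which is moreover stable under the non-negative-degree modes produced by the reductions above.
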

\begin{proof}
 Since $W_1$ and $W_2$ are finitely generated, their conformal weights lie in finitely many cosets of $\CC/\ZZ$. Thus they are both $\NN$-gradable as in Remark \ref{rem:N-gradable} with finite-dimensional homogeneous subspaces. Let $T_1$ be a finite-dimensional graded (with respect to the $\NN$-grading) subspace of $W_1$ such that $W_1=T_1+C_1(W_1)$; then $W_1(0)\subseteq T_1$. We first consider the case that $W_2$ is generated by $W_2(0)$.

 From the finite-dimensionality of $T_1$ and $W_2(0)$, the lower truncation of the intertwining operator $\cY$, and the $L(0)$-conjugation formula
 \begin{equation*}
  y^{L(0)}\cY(u_1,x)u_2 =\cY(y^{L(0)} u_1,xy)y^{L(0)} u_2
 \end{equation*}
for $u_1\in T_1$, $u_2\in W_2(0)$ (see \cite[Proposition 3.36(b)]{HLZ2}), we find that
\begin{equation*}
 \im\cY\vert_{T_1\otimes W_2(0)}\subseteq\bigoplus_{\mu\in\CC/\ZZ}\bigoplus_{n\in\NN} X_{[h_\mu+n]}
\end{equation*}
for suitable $h_\mu\in\mu$. Denoting this graded subspace of $X$ by $\widetilde{X}$, the argument of Remark \ref{rem:N-gradable} implies that it is enough to show that $\im\cY\subseteq\widetilde{X}$.

We first show that $\im\cY\vert_{W_1(n)\otimes W_2(0)}\subseteq\widetilde{X}$ by induction on $n$. The base case holds because $W_1(0)\subseteq T_1$. Then for $w_1\in W_1(n)$ with $n>0$, we may assume either $w_1\in T_1$ or $w_1=v_{-1} u_1$ for homogeneous $v\in V$ of weight $\mathrm{wt}\,v>0$ and homogeneous $u_1\in W_1$ of degree $\deg u_1$ such that
\begin{equation*}
 \mathrm{wt}\, v+\deg u_1=n.
\end{equation*}
In the first case, there is nothing to prove, and in the second weak associativity for intertwining operators (which follows from the Jacobi identity) implies
\begin{equation*}
 (x_0+x_2)^{\mathrm{wt}\, v} Y_X(v,x_0+x_2)\cY(u_1,x_2)u_2 = (x_0+x_2)^{\mathrm{wt}\, v} \cY(Y_{W_1}(v,x_0)u_1,x_2)u_2
\end{equation*}
for $u_2\in W_2(0)$. Extracting the constant term in $x_0$ leads to
\begin{align*}
 \cY(v_{-1} u_1,x_2)u_2 = \sum_{i\geq 0} v_{\mathrm{wt}\, v-1-i}\,x_2^{i-\mathrm{wt}\, v}\cY(u_1,x_2)u_2 -\sum_{i=1}^{\mathrm{wt}\, v}\binom{\mathrm{wt}\, v}{i} x_2^{-i}\cY(v_{-1+i} u_1,x_2)u_2.
\end{align*}
Since $v_{\mathrm{wt}\,v-1-i}$ is an operator of degree $i$ for all $i\geq 0$ and since
\begin{equation*}
 \deg v_{-1+i} u_1=\mathrm{wt}\, v+\deg u_1-i<n
\end{equation*}
for $1\leq i\leq\mathrm{wt}\, v$, the induction hypothesis implies that the coefficients of $\cY(v_{-1} u,x_2)$ lie in $\widetilde{X}$. This completes the proof that $\im\cY\vert_{W_1\otimes W_2(0)}\subseteq\widetilde{X}$.

Now under the assumption that $W_2(0)$ generates $W_2$, $W_2$ is the linear span of vectors $v_n u_2$ for $u_2\in W_2(0)$, homogeneous $v\in V$, and $n\in\ZZ$ such that $\mathrm{wt}\, v-n-1\geq 0$ (see \cite[Proposition 4.5.6]{LL}). The commutator formula for intertwining operators then implies
\begin{align*}
 \cY(w_1,x)v_n u_2 =v_n\cY(w_1,x)u_2-\sum_{i\geq 0}\binom{n}{i}x^{n-i}\cY(v_i w_1,x)u_2\in\widetilde{X}[\log x]\lbrace x\rbrace
\end{align*}
since $v_n$ is an operator with non-negative degree. This proves that $\im\cY$ is a generalized submodule of $X$ with lower-bounded conformal weights, and is thus $\NN$-gradable, when $W(0)$ generates $W_2$.

Now for general finitely-generated $W_2$, there is a finite filtration
\begin{equation*}
 0=W_2^{(0)}\subseteq W_2^{(1)}\subseteq\cdots\subseteq W_2^{(n)}=W_2
\end{equation*}
of $\NN$-graded submodules such that the successive quotients $W_2^{(i)}/W_2^{(i-1)}$ are $\NN$-gradable $V$-modules generated by their finite-dimensional degree-$0$ subspaces. Thus we can prove the lemma by induction on the length $n$ of the filtration, having already handled the case $n=1$.

For finitely-generated $W_2$ with $\NN$-grading, let $\widetilde{W}_2$ denote the submodule generated by $W_2(0)$. We have shown that $\im\cY\vert_{W_1\otimes\widetilde{W}_2}$ is a generalized submodule $\widetilde{X}\subseteq X$ with lower-bounded conformal weights. Also, $\cY$ induces an intertwining operator
\begin{equation*}
 \widetilde{\cY}: W_1\otimes(W_2/\widetilde{W}_2)\rightarrow(X/\widetilde{X})[\log x]\lbrace x\rbrace.
\end{equation*}
By induction, $\im\widetilde{\cY}$ is a generalized submodule of $X/\widetilde{X}$ with lower-bounded conformal weights. Then because we have an exact sequence
\begin{equation*}
 0\longrightarrow \widetilde{X}\longrightarrow\im\cY\longrightarrow\im\widetilde{\cY}\longrightarrow 0,
\end{equation*}
we see that $\im\cY$ has lower-bounded conformal weights and is thus $\NN$-gradable by the argument of Remark \ref{rem:N-gradable}.
\end{proof}

Now we conclude the following variant of the Key Theorem of \cite{Mi}:
\begin{cor}\label{cor:ImY_C1}
 Suppose $W_1$, $W_2$ are $V$-modules in $\cC^1_V$, $X$ is a generalized $V$-module, and $\cY$ is an intertwining operator of type $\binom{X}{W_1\,W_2}$. Then $\im\cY$ is a module in $\cC^1_V$.
\end{cor}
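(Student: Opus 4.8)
The plan is to deduce the corollary from the preceding lemma together with Miyamoto's Key Theorem on $C_1$-cofiniteness \cite{Mi}. First I would record the relevant consequences of the hypotheses: since $W_1$ and $W_2$ lie in $\cC^1_V$, Propositions \ref{prop:C1-fin-gen} and \ref{prop:C1N=C1mod} show that they are finitely generated, $\NN$-gradable, $C_1$-cofinite weak $V$-modules with finite-dimensional homogeneous subspaces. In particular $W_2$ is a finitely-generated grading-restricted generalized $V$-module, so all the hypotheses of the preceding lemma are satisfied.

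Applying that lemma gives that $\im\cY\subseteq X$ is $\NN$-gradable. This is the crucial input: it upgrades the codomain from an arbitrary generalized module $X$, which need not be $\NN$-gradable, to an $\NN$-gradable module. Restricting the codomain of $\cY$ to $\im\cY$ then yields a surjective intertwining operator of type $\binom{\im\cY}{W_1\,W_2}$ whose target $\im\cY$ is $\NN$-gradable. At this point Miyamoto's Key Theorem applies directly: for $C_1$-cofinite $\NN$-gradable weak modules $W_1$, $W_2$ and an intertwining operator with $\NN$-gradable target, the image is $C_1$-cofinite. Hence $\im\cY$ is $C_1$-cofinite. Finally, since $\im\cY$ is simultaneously $C_1$-cofinite and $\NN$-gradable, Proposition \ref{prop:C1N=C1mod} places it in $\cC^1_V$, as desired.

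The only genuinely nontrivial step is the $\NN$-gradability of $\im\cY$, which is exactly what the preceding lemma supplies; everything else is bookkeeping plus a citation. Thus the main obstacle---passing from a general generalized module $X$ to an $\NN$-gradable target so that Miyamoto's $\NN$-gradability hypothesis is met---is precisely the content resolved by the lemma, and the corollary follows almost immediately. It is worth emphasizing that this is the sense in which the statement is a \emph{variant} of the Key Theorem of \cite{Mi}: Miyamoto's original formulation presupposes that the target module is $\NN$-gradable, whereas here that property is derived rather than assumed, allowing $X$ to be an arbitrary generalized $V$-module.
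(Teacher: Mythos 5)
Your proposal is correct and follows essentially the same route as the paper: Proposition \ref{prop:C1-fin-gen} verifies the hypotheses of the preceding lemma, the lemma supplies $\NN$-gradability of $\im\cY$, Miyamoto's Key Theorem then gives $C_1$-cofiniteness, and Proposition \ref{prop:C1N=C1mod} places $\im\cY$ in $\cC^1_V$. The paper's proof is exactly this three-step chain, so there is nothing to add.
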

\begin{proof}
 By Proposition \ref{prop:C1-fin-gen}, $W_1$ and $W_2$ satisfy the hypotheses of the preceding lemma, so $\im\cY$ is $\NN$-gradable. Then $\im\cY$ is also $C_1$-cofinite by \cite[Key Theorem]{Mi}. Finally, Proposition \ref{prop:C1N=C1mod} shows $\im\cY$ is a grading-restricted generalized module in $\cC^1_V$.
\end{proof}

\section{Direct limits of weak modules}\label{sec:ind_lim}

In this section, we recall the definition of direct limit (in any category) and discuss the basic properties of direct limits in the category of weak modules for a vertex operator algebra.
\begin{defi}
A \textit{directed set} is a non-empty set $I$ with a reflexive and transitive binary relation $\leq$ such that for any $i,j\in I$, there exists $k\in I$ such that $i\leq k$ and $j\leq k$.
\end{defi}
\begin{rem}\label{rem:dir_set_cat}
To any directed set $(I,\leq)$, we can associate a small category with object set $I$ and $\hom(i,j)$ non-empty (and a singleton) if and only if $i\leq j$.
 \end{rem}

 \begin{rem}\label{rem:filt_two}
 If $(I,\leq)$ is a directed set, then for any finite subset $\lbrace i_n\rbrace_{n=1}^N\subseteq I$, there exists $k\in I$ such that $i_n\leq k$ for all $1\leq n\leq N$.
 \end{rem}

Now we define direct limits in any category $\cC$:
\begin{defi}
 Let $\cC$ be a category.
 \begin{itemize}
  \item A \textit{direct} (or \textit{inductive}) \textit{system} is a functor $\alpha: I\rightarrow\cC$, where $I$ is the category associated to a directed set $(I,\leq)$ as in Remark \ref{rem:dir_set_cat}. If $i,j\in I$ such that $i\leq j$, then we use $f_{i}^{j}:\alpha(i)\rightarrow\alpha(j)$ to denote the morphism $\alpha(i\to j)$ in $\cC$. Thus $$f_{i}^{i}=\Id_{\alpha(i)}$$
  for any $i\in I$ and
  $$f_{j}^{k}\circ f_{i}^{j}=f_{i}^{k}$$
  if $i\leq j\leq k$ in $I$.

  \item A \textit{target} of a direct system $\alpha: I\rightarrow\cC$ is an object $X$ in $\cC$ together with morphisms $\psi_i: \alpha(i)\rightarrow X$ such that
  \begin{equation*}
   \psi_i=\psi_j\circ f_i^j
  \end{equation*}
for any $i\leq j$ in $I$.

\item A \textit{direct} (or \textit{inductive}) \textit{limit} of a direct system $\alpha: I\rightarrow\cC$ is a target $(\varinjlim\alpha,\lbrace\phi_i\rbrace_{i\in I})$ satisfying the following universal property: for any target $(X,\lbrace\psi_i\rbrace_{i\in I})$, there is a unique morphism $F:\varinjlim\alpha\rightarrow X$ such that the diagram
\begin{equation*}
  \xymatrixcolsep{4pc}
  \xymatrix{
  \alpha(i) \ar[d]_{\phi_i} \ar[rd]^{\psi_i} & \\
  \varinjlim\alpha \ar[r]_{F} & X \\
  }
 \end{equation*}
commutes for all $i\in I$.
 \end{itemize}
\end{defi}

We now fix a vertex operator algebra $V$ and let $\cW$ denote the full category of weak $V$-modules. Note that $\cW$ is a $\CC$-linear abelian category; recall that this means:
\begin{enumerate}
 \item Morphism sets are $\CC$-vector spaces such that composition is bilinear.
 \item There is a zero object $0$.
 \item Any finite set of objects $\lbrace W_n\rbrace$ has a biproduct. Recall that this is an object $\bigoplus W_n$ together with projection morphisms  $p_n: \bigoplus W_n\rightarrow W_n$ and inclusion morphisms $q_n: W_n\rightarrow\bigoplus W_n$ such that $p_m\circ q_n=\delta_{m,n}\mathrm{Id}_{W_n}$ and $\sum q_n\circ p_n=\mathrm{Id}_{\bigoplus W_n}$. A biproduct is both a product and a coproduct of the $W_n$.
%

 \item Every morphism has a kernel and cokernel.
 \item Every monomorphism is a kernel and every epimorphism is a cokernel.
\end{enumerate}
In addition, $\cW$ is closed under arbitrary coproducts since any direct sum of weak $V$-modules is a weak $V$-module in the obvious way. As above, for any direct sum $\bigoplus_{i\in I} W(i)$ in $\cW$, we will use $q_i$ to denote the inclusion of $W(i)$ into the direct sum and $p_i$ to denote the projection from the direct sum to $W(i)$.
\begin{prop}\label{prop:direct_lim_exist}
 The category $\cW$ of weak $V$-modules contains a direct limit of any direct system $\alpha: I\rightarrow\cW$.
\end{prop}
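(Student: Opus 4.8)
The plan is to construct $\varinjlim\alpha$ explicitly as a quotient of the coproduct $\bigoplus_{i\in I}\alpha(i)$, invoking only the two structural facts about $\cW$ already recorded above: that it is a $\CC$-linear abelian category, and that it is closed under arbitrary coproducts. Together these make $\cW$ cocomplete, and I would realize the direct limit through the standard colimit-as-cokernel construction valid in any cocomplete abelian category.

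First I would form two coproducts in $\cW$: the object $U=\bigoplus_{i\in I}\alpha(i)$, with canonical inclusions $q_i:\alpha(i)\to U$, and the object $\widetilde U=\bigoplus_{(i,j)}\alpha(i)$ indexed over all comparable pairs $i\leq j$ in $I$. Both exist by closure under coproducts. I would then define a morphism $\delta:\widetilde U\to U$ whose restriction to the summand indexed by a pair $i\leq j$ is $q_j\circ f_i^j-q_i$; this exists and is unique by the universal property of the coproduct $\widetilde U$. Since $\cW$ is abelian, $\delta$ has a cokernel, and I would set $\varinjlim\alpha:=\coker\delta$ with projection $\pi:U\to\coker\delta$ and structure morphisms $\phi_i:=\pi\circ q_i$.

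Next I would verify that $(\varinjlim\alpha,\lbrace\phi_i\rbrace)$ is a target: for $i\leq j$ the element $q_j\circ f_i^j-q_i$ lies in $\im\delta=\ker\pi$, so $\phi_j\circ f_i^j=\pi\circ q_j\circ f_i^j=\pi\circ q_i=\phi_i$. For the universal property, given any target $(X,\lbrace\psi_i\rbrace)$ the coproduct property of $U$ yields a unique $\Psi:U\to X$ with $\Psi\circ q_i=\psi_i$; the target condition $\psi_j\circ f_i^j=\psi_i$ forces $\Psi\circ\delta=0$ on each summand, so $\Psi$ factors uniquely through $\coker\delta$, producing $F:\varinjlim\alpha\to X$ with $F\circ\phi_i=\psi_i$. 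Uniqueness of $F$ follows because $\pi$ is an epimorphism and the images of the $q_i$ generate $U$, so any $F'$ with $F'\circ\phi_i=\psi_i$ satisfies $F'\circ\pi=\Psi=F\circ\pi$ and hence $F'=F$.

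I do not expect a serious obstacle here, since this is simply the general construction of colimits in a cocomplete abelian category; the only input specific to $\cW$ is the closure under coproducts noted above, together with the fact that the weak-module axioms (lower truncation, the vacuum property, the Jacobi identity, and the $L(-1)$-derivative property) are all preserved under passing to coproducts and quotients. The one point deserving care is the bookkeeping: confirming that the single morphism $\delta$ encodes exactly the relations $\phi_i=\phi_j\circ f_i^j$ and nothing more, so that $\coker\delta$ is neither too large nor too small. Here directedness of $I$ is not logically required for existence, but I would record the resulting concrete description it affords, namely that every class in $\varinjlim\alpha$ is represented by some $\phi_i(w)$ and that $\phi_i(w)=\phi_j(w')$ precisely when $f_i^k(w)=f_j^k(w')$ for some $k\geq i,j$; this is the description that makes the vertex-algebraic structure on $\varinjlim\alpha$ transparent in the later sections.
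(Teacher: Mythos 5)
Your proposal is correct, but it takes a genuinely different (more categorical) route than the paper. Concretely, the two constructions produce the same object: the image of your $\delta$ is exactly the subspace $K_\alpha=\sum_{i}\sum_{j\geq i}\mathrm{span}\lbrace q_i(w_i)-q_j(f_i^j(w_i))\rbrace$ that the paper quotients by, so $\coker\delta=\bigoplus_{i\in I}\alpha(i)/K_\alpha$. The difference is in the verification machinery. The paper builds the quotient at the level of vector spaces and then checks by hand that $K_\alpha$ is stable under the vertex operators (so the quotient is again a weak module and the $\phi_i$ are $V$-homomorphisms), and constructs the universal map explicitly as $\sum_i\psi_i\circ p_i$ descending to the quotient. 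You instead invoke only the two recorded structural facts --- $\cW$ is abelian (so $\coker\delta$ exists \emph{as a weak module}) and closed under arbitrary coproducts --- and run everything through universal properties; this absorbs the module-theoretic verifications into the abelian-category axioms and, as you note, proves the stronger statement that $\cW$ admits colimits of arbitrary small diagrams, directedness being irrelevant. What the paper's concrete realization buys in exchange is element-level access to $K_\alpha$, which is then exploited in the surrounding results: Proposition \ref{prop:dir_lim_union} and especially Lemma \ref{lem:ker_phi_i} (that $\ker\phi_i=\bigcup_{j\geq i}\ker f_i^j$) are proved by direct manipulation of that realization. Your closing ``concrete description'' --- that every class is some $\phi_i(w)$, with $\phi_i(w)=\phi_j(w')$ iff $f_i^k(w)=f_j^k(w')$ for some $k\geq i,j$ --- is precisely this content; be aware it is \emph{not} automatic from the cokernel construction but requires the directedness argument of those two results, so if later sections are to be built on your version you would still need to prove it rather than merely record it.
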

\begin{proof}
 We can use the construction of direct limits in the category of vector spaces. For a direct system $\alpha: I\rightarrow\cW$, we set $\alpha(i)= W(i)$ for $i\in I$ and define
\begin{equation*}
 \varinjlim\alpha =\bigoplus_{i\in I} W(i)\bigg/ K_\alpha
\end{equation*}
where
\begin{equation*}
 K_\alpha=\sum_{i\in I}\sum_{j\geq i}\text{span}\lbrace q_i(w_i)-q_j(f_{i}^{j}(w_i))\,\vert\,w_i\in W(i)\rbrace.
\end{equation*}
We define $\phi_i: W(i)\rightarrow\varinjlim\alpha$ for $i\in I$ to be $q_i$ followed by projection onto the quotient. Then it is immediate from the definition of $K_\alpha$ that $\phi_j\circ f_{i}^{j}=\phi_i$ for any $i\leq j$ in $I$.

Since the $W(i)$ are weak $V$-modules, the direct sum $\bigoplus_{i\in I} W(i)$ has the structure of a weak $V$-module such that the $q_i$ are $V$-homomorphisms. Also, $K_\alpha$ is a weak $V$-submodule, because if $q_i(w_i)-q_j(f_{i}^{j}(w_i))$ is a spanning vector of $K_\alpha$, then 
\begin{align*}
 Y_{\bigoplus_{i\in I} W(i)}(v,x)\left(q_i(w_i)-q_j(f_{i}^{j}(w_i))\right) = q_i\left(Y_{W(i)}(v,x)w_i\right)-q_j\left(f_{i}^{j}(Y_{W(i)}(v,x)w_i)\right)
\end{align*}
for $v\in V$, so that the coefficients of powers of $x$ are still spanning vectors of $K_\alpha$. Thus the quotient $\varinjlim\alpha$ is a weak $V$-module and $\phi_i$ for each $i\in I$ is a $V$-homomorphism.

Now we show that $(\varinjlim\alpha,\lbrace\phi_i\rbrace_{i\in I})$ is actually a direct limit of $\alpha$. Thus suppose $(X,\lbrace\psi_i\rbrace_{i\in I})$ is a target of $\alpha$ and define
\begin{equation*}
 F=\sum_{i\in I} \psi_i\circ p_i: \bigoplus_{i\in I} W(i)\rightarrow X.
\end{equation*}
If $q_i(w_i)-q_j(f_{i}^{j}(w_i))$ is a spanning vector for $K_\alpha$, then
\begin{equation*}
 F\left(q_i(w_i)-q_j(f_{i}^{j}(w_i))\right) =\psi_i(w_i)-\psi_j(f_{i}^{j}(w_i)) =0,
\end{equation*}
so $F$ descends to a well-defined homomorphism $\overline{F}:\varinjlim\alpha\rightarrow X$ such that
\begin{equation*}
 \overline{F}\circ\phi_i = F\circ q_i=\psi_i
\end{equation*}
for $\in I$. That $\overline{F}$ is the unique homomorphism such that $\overline{F}\circ\phi_i=\psi_i$ follows because $\varinjlim\alpha$ is spanned by the images of the $\phi_i$.
\end{proof}

In the realization of $(\varinjlim\alpha,\lbrace\phi_i\rbrace_{i\in I})$ given in the preceding proof, $\varinjlim\alpha$ is the sum of the images of the $\phi_i$. In fact, from the universal property, any realization of the direct limit will be the sum of the images of the $\phi_i$. From properties of directed sets, we can say more:
\begin{prop}\label{prop:dir_lim_union}
For any direct limit $(\varinjlim\alpha,\lbrace\phi_i\rbrace_{i\in I})$ of a direct system $\alpha: I\rightarrow\cW$,
\begin{equation*}
 \varinjlim\alpha=\bigcup_{i\in I} \im \phi_i.
\end{equation*}
\end{prop}
\begin{proof}
 We have seen that any vector $b\in\varinjlim\alpha$ can be written $\phi_{i_1}(w_1)+\ldots+\phi_{i_N}(w_N)$ for finitely many $i_1,\ldots,i_N\in I$ and $w_n\in W(i_n)$ for $1\leq n\leq N$. By Remark \ref{rem:filt_two}, there is some $k\in I$ such that $i_n\leq k$ for each $n$. Then
 \begin{equation*}
  b=\sum_{n=1}^N \phi_{i_n}(w_n) =\sum_{n=1}^N \phi_k(f_{i_n}^{k}(w_n)) \in\im \phi_k.
 \end{equation*}
This shows that $\varinjlim\alpha\subseteq\bigcup_{i\in I}\im\phi_i$, and the reverse inclusion is obvious.
\end{proof}

Later, we will need a characterization of the kernels of the $\phi_i$ associated to a direct limit:
\begin{lem}\label{lem:ker_phi_i}
 For any direct limit $(\varinjlim\alpha,\lbrace\phi_i\rbrace_{i\in I})$ of a direct system $\alpha: I\rightarrow\cW$,
 \begin{equation*}
  \ker\phi_i=\bigcup_{j\geq i} \ker f_{i}^{j},
 \end{equation*}
recalling that the $f_i^j$ are the morphisms $\alpha(i \rightarrow j)$ in $\cW$.
\end{lem}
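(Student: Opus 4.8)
The plan is first to reduce to the concrete model of the direct limit built in Proposition~\ref{prop:direct_lim_exist}. Any two direct limits of $\alpha$ are identified by a unique isomorphism intertwining the structure maps, so $\ker\phi_i$ is independent of the chosen realization; I may therefore take $\varinjlim\alpha=\bigoplus_{i\in I}W(i)/K_\alpha$ with $\phi_i$ equal to the inclusion $q_i$ followed by the quotient projection. In this model, $w\in\ker\phi_i$ means precisely that $q_i(w)\in K_\alpha$. The inclusion $\bigcup_{j\geq i}\ker f_i^j\subseteq\ker\phi_i$ is then immediate: since a direct limit is in particular a target, we have $\phi_i=\phi_j\circ f_i^j$, so $f_i^j(w)=0$ forces $\phi_i(w)=\phi_j(f_i^j(w))=0$.

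The substance is the reverse inclusion, for which I would prove the following description of $K_\alpha$: a finitely-supported element $\sum_n q_n(y_n)$ lies in $K_\alpha$ if and only if there is some $k\in I$ dominating all the relevant indices with $\sum_n f_n^k(y_n)=0$ in $W(k)$. Applying this to the single term $q_i(w)$ yields $k\geq i$ with $f_i^k(w)=0$, i.e. $w\in\ker f_i^k\subseteq\bigcup_{j\geq i}\ker f_i^j$, as desired. To prove the nontrivial direction of the description, I write $q_i(w)=\sum_m\bigl(q_{a_m}(u_m)-q_{b_m}(f_{a_m}^{b_m}(u_m))\bigr)$ as a finite combination of the spanning relations of $K_\alpha$, with each $a_m\leq b_m$. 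Using Remark~\ref{rem:filt_two} I choose $k\in I$ dominating $i$ and the finitely many indices $a_m,b_m$. On the sub-weak-module $\bigoplus_{n\leq k}W(n)$, which contains $q_i(w)$ and all the terms above, I define the $V$-homomorphism $T_k=\sum_{n\leq k}f_n^k\circ p_n$ (a finite sum on each element). Evaluating $T_k$ on the left gives $f_i^k(w)$, while evaluating it on each relation term gives $f_{a_m}^k(u_m)-f_{b_m}^k(f_{a_m}^{b_m}(u_m))=0$ by functoriality $f_{b_m}^k\circ f_{a_m}^{b_m}=f_{a_m}^k$. Hence $f_i^k(w)=0$.

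The main obstacle is exactly this last description of $K_\alpha$: a priori the relations witnessing $q_i(w)\in K_\alpha$ may involve indices bearing no order relation to $i$, so one cannot simply apply a single ``transport to $k$'' map to the whole direct sum $\bigoplus_{i\in I}W(i)$. The device that resolves this is the observation that only finitely many indices occur in any such expression, so after choosing a common upper bound $k$ all of them lie in $\bigoplus_{n\leq k}W(n)$, on which the transport map $T_k$ is genuinely defined; functoriality of the $f_n^k$ then collapses every relation term to zero. Everything else is routine bookkeeping with the explicit construction of $\varinjlim\alpha$.
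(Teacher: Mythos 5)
Your proof is correct and takes essentially the same route as the paper's: both pass to the explicit realization of $\varinjlim\alpha$ from Proposition \ref{prop:direct_lim_exist}, use Remark \ref{rem:filt_two} to choose a common upper bound $k$ for the finitely many indices appearing in the $K_\alpha$-relation expressing $q_i(w)$, and then collapse every relation term to zero via the functoriality $f_{b}^{k}\circ f_{a}^{b}=f_{a}^{k}$. The only difference is cosmetic: you package the paper's projection-and-transport bookkeeping (applying the $p_j$ and then summing the transported components) into the single well-defined homomorphism $T_k=\sum_{n\leq k}f_n^k\circ p_n$ on $\bigoplus_{n\leq k}W(n)$, which organizes the same computation a bit more cleanly.
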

\begin{proof}
 If $i\leq j$ for some $j\in I$, then $\ker f_{i}^{j}\subseteq\ker\phi_i$ because $\phi_j\circ f_{i}^{j}=\phi_i$. Conversely, we need to show that if $w_i\in\ker\phi_i$, then $w_i\in\ker f_{i}^{k}$ for some $k\geq i$ in $I$. We may use the realization of the direct limit $\varinjlim\alpha$ from the proof of Proposition \ref{prop:direct_lim_exist}, so that $w_i\in\ker\phi_i$ if and only if $q_i(w_i)\in K_\alpha$, that is, if and only if
\begin{equation*}
 q_i(w_i)=\sum_{n=1}^N \left(q_{i_n}(w_{n})-q_{j_n}(f_{i_n}^{j_n}(w_{n}))\right)
\end{equation*}
for certain $i_n,j_n\in I$ such that $i_n\leq j_n$ and $w_{n}\in W(i_n)$.

By using the notation $w_{N+n}=-f_{i_n}^{j_n}(w_n)$, $i_{N+n}=j_n$ for $n\in\lbrace 1,\ldots,N\rbrace$, we may rewrite
\begin{equation}\label{eqn:qi_wi}
 q_i(w_i)=\sum_{n=1}^{2N} q_{i_n}(w_n).
\end{equation}
Moreover, by Remark \ref{rem:filt_two}, there is some $k\in I$ such that $i\leq k$ and $j_n\leq k$ for $1\leq n\leq N$. Then we get
\begin{align}\label{eqn:gn_wn}
 \sum_{n=1}^{2N} f_{i_n}^{k}(w_n) & =\sum_{n=1}^N \left(f_{i_n}^{k}(w_n)- f_{j_n}^{k}(f_{i_n}^{j_n}(w_n))\right) = 0.
\end{align}
By applying the projection $p_j$ to \eqref{eqn:qi_wi} for $j\in I$, we see that
\begin{equation*}
 \sum_{i_n=j} w_n =\delta_{i,j}w_i.
\end{equation*}
Thus using \eqref{eqn:gn_wn}, we conclude
\begin{align*}
 0 =\sum_{n=1}^{2N} f_{i_n}^{k}(w_n) =\sum_{j\leq k}\sum_{i_n=j} f_{j}^k(w_n) =\sum_{j\leq k} f_{j}^{k}(\delta_{i,j} w_i) =f_{i}^{k}(w_i).
\end{align*}
That is, $w_i\in\ker f_{i}^{k}$.
\end{proof}

\section{Direct limit completion in the weak module category}\label{sec:ind_comp}

In this section, we will define the direct limit completion $\ind(\cC)$ of a category $\cC$ of grading-restricted generalized modules for a vertex operator algebra in the category $\cW$ of weak modules. The work here is motivated by the ind-completion constructions of, for example \cite[Chapter 6]{KS}, but it is important to note that here we are using the ambient category $\cW$ of weak modules rather than the Yoneda category $\cC^\wedge$ of contravariant functors from $\cC$ to vector spaces. By \cite[Proposition 2.7.1]{KS}, there will be an essentially surjective functor from the ind-completion of \cite[Chapter 6]{KS} to the category $\ind(\cC)$ defined here, but it is not clear whether this functor will be fully faithful.

We now fix a category of $V$-modules that satisfies the following conditions:
\begin{assum}\label{assum:Sec4}
Assume that $\cC$ is a full (sub)category of grading-restricted generalized $V$-modules such that:
 \begin{enumerate}
  \item The zero module $0$ is an object of $\cC$.
  \item The category $\cC$ is closed under submodules, quotients, and finite direct sums.
  \item Every module in $\cC$ is finitely generated.
 \end{enumerate}
\end{assum}

The first two assumptions on $\cC$ guarantee that $\cC$ is an abelian category, while the third is used in the following two lemmas that we will need in the next section:
\begin{lem}\label{lem:im_f_in_im_phi_i}
 Suppose $(\varinjlim\alpha,\lbrace\phi_i\rbrace_{i\in I})$ is the direct limit in $\cW$ of a direct system $\alpha: I\rightarrow\cC$. If $W\subseteq\varinjlim\alpha$ is any $V$-submodule that is an object of $\cC$, then $W\subseteq\im\phi_i$ for some $i\in I$.
\end{lem}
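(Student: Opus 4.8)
The plan is to exploit the finite generation of $W$ guaranteed by Assumption \ref{assum:Sec4}(3) together with the description of $\varinjlim\alpha$ as a union of the images of the $\phi_i$ from Proposition \ref{prop:dir_lim_union}. Since $W$ is an object of $\cC$, it is finitely generated as a weak $V$-module; so I would begin by fixing a finite generating set $\lbrace w_1,\dots,w_n\rbrace\subseteq W\subseteq\varinjlim\alpha$.

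First I would invoke Proposition \ref{prop:dir_lim_union} to write $\varinjlim\alpha=\bigcup_{i\in I}\im\phi_i$, so that each generator $w_k$ lies in $\im\phi_{i_k}$ for some index $i_k\in I$. Because $I$ is directed, Remark \ref{rem:filt_two} supplies a single index $i\in I$ with $i_k\leq i$ for all $1\leq k\leq n$. The compatibility relation $\phi_{i_k}=\phi_i\circ f_{i_k}^{i}$ then gives $\im\phi_{i_k}\subseteq\im\phi_i$, so that every generator $w_k$ belongs to $\im\phi_i$.

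Finally, I would observe that $\im\phi_i$ is a $V$-submodule of $\varinjlim\alpha$, being the image of the $V$-module homomorphism $\phi_i$. Since this submodule contains all of the generators $w_1,\dots,w_n$, it must contain the entire submodule they generate, namely $W$; hence $W\subseteq\im\phi_i$, as desired.

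There is really no serious obstacle here: the only substantive ingredient is the finite generation of $W$ from Assumption \ref{assum:Sec4}(3), which reduces the containment of the whole submodule to the containment of finitely many vectors, after which the directedness of $I$ lets us absorb those finitely many vectors into a single $\im\phi_i$. Without finite generation one would have to control infinitely many generators at once, and no single index $i$ need suffice; this is precisely why Assumption \ref{assum:Sec4}(3) is the key hypothesis being used.
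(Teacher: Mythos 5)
Your proof is correct and takes essentially the same route as the paper's: both use finite generation of $W$ (Assumption \ref{assum:Sec4}(3)), Proposition \ref{prop:dir_lim_union} to place the finitely many generators in images of the $\phi_i$, and directedness of $I$ to absorb them into a single $\im\phi_i$, which is a submodule and hence contains all of $W$. The paper merely phrases the last step via the sum $\sum_{n=1}^N\im\phi_{j_n}\subseteq\im\phi_i$, a cosmetic difference only.
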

\begin{proof}
 Since $\varinjlim\alpha=\bigcup_{i\in I}\im\phi_i$ by Proposition \ref{prop:dir_lim_union}, the finitely many generators of $W$ are contained in $\sum_{n=1}^N\im\phi_{j_n}$ for finitely may $j_n\in I$. Thus
 \begin{equation*}
  W\subseteq\sum_{n=1}^N\im\phi_{j_n}\subseteq\im\phi_i
 \end{equation*}
where $i\in I$ is such that $j_n\leq i$ for each $n$.
\end{proof}

\begin{lem}\label{lem:ker_fin_gen}
 If $(\varinjlim\alpha,\lbrace\phi_i\rbrace_{i\in I})$ is the direct limit in $\cW$ of a direct system $\alpha: I\rightarrow\cC$, then for each $i\in I$,
 \begin{equation*}
  \ker\phi_i=\ker f_{i}^{j}
 \end{equation*}
for some $j\in I$.
\end{lem}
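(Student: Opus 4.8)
The plan is to combine the explicit description of $\ker\phi_i$ from Lemma \ref{lem:ker_phi_i} with the finiteness hypothesis in Assumption \ref{assum:Sec4}. Writing $W(i)=\alpha(i)$, that lemma gives
\[
 \ker\phi_i=\bigcup_{j\geq i}\ker f_i^j,
\]
and this is an increasing (directed) union: if $i\leq j\leq j'$ then $f_i^{j'}=f_j^{j'}\circ f_i^j$, so $\ker f_i^j\subseteq\ker f_i^{j'}$. The goal is precisely to show that this directed union stabilizes, i.e.\ that it already equals a single term $\ker f_i^j$.

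First I would observe that $\ker\phi_i$ is itself an object of $\cC$: it is a $V$-submodule of $W(i)\in\cC$, and $\cC$ is closed under submodules by Assumption \ref{assum:Sec4}(2). Hence, by Assumption \ref{assum:Sec4}(3), $\ker\phi_i$ is finitely generated, say by $g_1,\dots,g_m$.

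Next, since each generator $g_k$ lies in the union $\bigcup_{j\geq i}\ker f_i^j$, there is some $j_k\geq i$ with $g_k\in\ker f_i^{j_k}$. By Remark \ref{rem:filt_two} there exists $j\in I$ with $j_k\leq j$ for all $k$ (and automatically $j\geq i$), and by the monotonicity noted above, $g_k\in\ker f_i^{j_k}\subseteq\ker f_i^j$ for every $k$. Since $\ker f_i^j$ is a $V$-submodule of $W(i)$ containing all the generators of $\ker\phi_i$, it contains $\ker\phi_i$; the reverse inclusion $\ker f_i^j\subseteq\ker\phi_i$ is immediate from Lemma \ref{lem:ker_phi_i}. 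Therefore $\ker\phi_i=\ker f_i^j$, as desired.

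The only real content is the first step, the conclusion that $\ker\phi_i$ is finitely generated; this is where Assumption \ref{assum:Sec4} is used in an essential way, via closure under submodules (to land $\ker\phi_i$ inside $\cC$) together with finite generation of every object of $\cC$ (to produce a finite generating set). Once finiteness is in hand, the stabilization of the directed union is a routine consequence of the directedness of $I$ through Remark \ref{rem:filt_two}.
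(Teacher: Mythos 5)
Your proof is correct and follows essentially the same route as the paper's: apply Lemma \ref{lem:ker_phi_i}, use closure under submodules plus finite generation (Assumption \ref{assum:Sec4}) to get finitely many generators of $\ker\phi_i$, and then use directedness of $I$ to find a single $j$ whose kernel $\ker f_i^j$ captures them all. Your explicit verification of the monotonicity $\ker f_i^j\subseteq\ker f_i^{j'}$ for $j\leq j'$ is a detail the paper leaves implicit, but the argument is otherwise identical.
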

\begin{proof}
 By Lemma \ref{lem:ker_phi_i}, $\ker\phi_i=\bigcup_{j\geq i} \ker f_{i}^{j}$. Since $\ker\phi_i\subseteq W(i)$ is a submodule of a module in $\cC$, it is itself a module in $\cC$ and is finitely generated. Thus there are finitely many $j_1,\ldots,j_N\in I$ such that
 \begin{equation*}
  \ker\phi_i\subseteq\sum_{n=1}^N\ker f_{i}^{j_n}\subseteq\ker f_{i}^{j},
 \end{equation*}
where $j\in I$ is such that $j_n\leq j$ for each $n$. Since obviously $\ker f_{i}^{j}\subseteq\ker\phi_i$, we get $\ker\phi_i=\ker f_{i}^{j}$.
\end{proof}

We now introduce the direct limit completion of $\cC$ in $\cW$:
\begin{defi}
 The \textit{direct limit completion} of $\cC$ in $\cW$ is the full subcategory $\ind(\cC)$ of objects in $\cW$ which are isomorphic to direct limits of direct systems $\alpha: I\rightarrow\cC$.
\end{defi}

We would like to realize objects of $\ind(\cC)$ as direct limits in a canonical way. For any weak $V$-module $X$, let $I_X$ denote the set of $V$-submodules which are objects of $\cC$; $I_X$ is non-empty because by assumption $0$ is an object of $\cC$. Then $(I_X,\subseteq)$ is a directed set because if $W_1, W_2$ are two $\cC$-submodules of $X$, then so is $W_1+W_2$: it is a quotient of $W_1\oplus W_2$, and $\cC$ is closed under finite direct sums and quotients by assumption. Thus for any weak $V$-module $X$, we can define an inductive system $\alpha_X: I_X\rightarrow\cC$ such that $\alpha_X(W)=W$ for $\cC$-submodules $W\subseteq X$, and such that $f_{W}^{\til{W}}: W\rightarrow\til{W}$ is the inclusion map for $\cC$-submodules $W\subseteq\til{W}\subseteq X$.

We would like to show that if $X$ is an object of $\ind(\cC)$, then $X$ is isomorphic to $\varinjlim\alpha_X$. Indeed, defining $i_W: W\rightarrow X$ for any $W\in I_X$ to be the inclusion, $(X,\lbrace i_W\rbrace_{W\in I_X}\rbrace)$ is a target of $\alpha_X$. Thus by the universal property of $\varinjlim\alpha_X$, we have a unique $V$-homomorphism $Q_X: \varinjlim\alpha_X\rightarrow X$ such that
\begin{equation*}
 \xymatrix{
 W \ar[d]_{\phi_W} \ar[rd]^{i_W} & \\
 \varinjlim\alpha_X \ar[r]_(.6){Q_X} & X \\
 }
\end{equation*}
commutes for $W\in I_X$.

\begin{prop}\label{prop:alpha_X}
The $V$-homomorphism $Q_X$ is injective for every weak $V$-module $X$ and is surjective if and only if $X$ is an object of $\ind(\cC)$.
\end{prop}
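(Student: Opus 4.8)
The plan is to prove the two assertions — injectivity of $Q_X$ always, and surjectivity exactly when $X\in\ind(\cC)$ — separately, using the canonical inductive system $\alpha_X: I_X\to\cC$ indexed by the directed set of $\cC$-submodules of $X$.

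\emph{Surjectivity.} By Proposition \ref{prop:dir_lim_union}, $\varinjlim\alpha_X=\bigcup_{W\in I_X}\im\phi_W$, and since $Q_X\circ\phi_W=i_W$ is the inclusion of $W$ into $X$, we have $\im Q_X=\sum_{W\in I_X} W$, the sum of all $\cC$-submodules of $X$. Thus $Q_X$ is surjective if and only if $X$ is the union (sum) of its $\cC$-submodules. If $X$ is an object of $\ind(\cC)$, then by definition $X\cong\varinjlim\beta$ for some direct system $\beta: J\to\cC$; writing the isomorphism out and using Proposition \ref{prop:dir_lim_union} again, every element of $X$ lies in the image of some $\cC$-submodule, so $X=\sum_{W\in I_X}W$ and $Q_X$ is surjective. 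Conversely, if $Q_X$ is surjective then $X\cong\varinjlim\alpha_X$ (once we know $Q_X$ is also injective), which exhibits $X$ as a direct limit of a system into $\cC$, so $X\in\ind(\cC)$. I expect this direction to be routine.

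\emph{Injectivity.} This is the substantive part. I would show $\ker Q_X=0$ by proving that each $\phi_W$ is already injective on the relevant piece and then using the explicit description of $\varinjlim\alpha_X$ from the proof of Proposition \ref{prop:direct_lim_exist}. Concretely, I would take $b\in\ker Q_X$ and, using Proposition \ref{prop:dir_lim_union}, write $b=\phi_W(w)$ for some $\cC$-submodule $W\subseteq X$ and some $w\in W$. Then $0=Q_X(b)=i_W(w)=w$ viewed inside $X$, so $w=0$ as an element of $W$ (since $i_W$ is the genuine inclusion, hence injective). Therefore $b=\phi_W(0)=0$. The key point making this clean here, as opposed to the general direct-limit setting, is that all the transition maps $f_W^{\widetilde W}$ in $\alpha_X$ are genuine inclusions of submodules of the fixed module $X$, so each $f_W^{\widetilde W}$ is injective; combined with Lemma \ref{lem:ker_phi_i}, this gives $\ker\phi_W=\bigcup_{W\subseteq\widetilde W}\ker f_W^{\widetilde W}=0$ directly, so that $\phi_W$ itself is injective for every $W\in I_X$.

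\emph{Main obstacle.} The only delicate step is making sure that $b\in\ker Q_X$ can be represented as $\phi_W(w)$ for a \emph{single} $W$ and $w$, rather than merely as a finite sum $\sum_n\phi_{W_n}(w_n)$. This is handled exactly as in Proposition \ref{prop:dir_lim_union}: by Remark \ref{rem:filt_two} the finitely many $W_n$ admit a common upper bound $W=W_1+\dots+W_N\in I_X$ (which is a $\cC$-submodule since $\cC$ is closed under finite sums), and replacing each $w_n$ by its image under $f_{W_n}^{W}$ collapses the sum to a single term $\phi_W(w)$. With the representation in hand, injectivity of the inclusion $i_W$ finishes the argument. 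Thus the proof reduces to combining the ``union'' description of the direct limit with the fact that the structural maps of $\alpha_X$ are honest inclusions.
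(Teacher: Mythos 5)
Your proof is correct and follows essentially the same route as the paper's: both use Proposition \ref{prop:dir_lim_union} to write an arbitrary element of $\varinjlim\alpha_X$ as $\phi_W(w)$ and deduce injectivity of $Q_X$ from the injectivity of the genuine inclusions $i_W$, and both establish surjectivity for $X$ in $\ind(\cC)$ by observing that $X$ is the union of its $\cC$-submodules (the images of the structure maps of any direct system realizing $X$ being quotients of objects of $\cC$, hence in $\cC$). Your supplementary observation that each $\phi_W$ is itself injective, via Lemma \ref{lem:ker_phi_i} and the fact that the transition maps of $\alpha_X$ are honest inclusions, is harmless but not needed.
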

\begin{proof}
 We use Proposition \ref{prop:dir_lim_union}. For $b\in\ker Q_X$, we have $b=\phi_W(w)$ for some $\cC$-submodule $W\in I_X$ and $w\in W$, so that
 \begin{equation*}
  0=Q_X(b)=Q_X(\phi_W(w))=i_W(w).
 \end{equation*}
 Since $i_W$ is injective, $w=0$ and $b=\phi_W(w)=0$ as well. Thus $\ker Q_X=0$.

Now if $X$ is not an object of $\ind(\cC)$, then $Q_X$ is not surjective because $Q_X$ cannot be an isomorphism. On the other hand, if $X$ is an object of $\ind(\cC)$, then Proposition \ref{prop:dir_lim_union} implies that $X$ is the union of submodules which are objects of $\cC$ (because $\cC$ is closed under quotients). In other words, for any $b\in X$, $b=i_W(w)$ for some $\cC$-submodule $W\in I_X$ and $w\in W$. Thus
\begin{equation*}
 b=i_W(w)=Q_X(\phi_W(w))
\end{equation*}
and we conclude $\im Q_X=X$.
\end{proof}

The second paragraph of the above proof shows that $Q_X$ is surjective (and therefore an isomorphism) precisely when $X$ is the union of submodules which are objects of $\cC$. Every vector in such a weak module $X$ is an element of a (grading-restricted) generalized submodule, and hence is the sum of generalized $L(0)$-eigenvectors. So $X$ is actually a generalized module (see \cite[Remark 2.13]{HLZ1}). We can rephrase these observations as an alternative characterization of $\ind(\cC)$:
\begin{prop}\label{prop:indC_as_unions}
 The category $\ind(\cC)$ is the full subcategory of generalized $V$-modules that are unions of submodules which are objects of $\cC$.
\end{prop}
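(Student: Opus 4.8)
The plan is to read the statement off from Proposition~\ref{prop:alpha_X} together with the observations recorded just above it. The fact already in hand is that for \emph{every} weak $V$-module $X$ the canonical comparison map $Q_X:\varinjlim\alpha_X\to X$ is injective, and that it is surjective exactly when $X$ is an object of $\ind(\cC)$. Since an injective homomorphism is an isomorphism precisely when it is surjective, this already tells us that $X$ lies in $\ind(\cC)$ if and only if $Q_X$ is an isomorphism, equivalently if and only if $Q_X$ is surjective.

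Next I would invoke the description of surjectivity of $Q_X$ obtained in the second paragraph of the proof of Proposition~\ref{prop:alpha_X}: using Proposition~\ref{prop:dir_lim_union} to write $\varinjlim\alpha_X$ as the union of the images $\im\phi_W$, one sees that $Q_X$ is onto exactly when every $b\in X$ lies in some $\cC$-submodule $W\in I_X$, i.e.\ exactly when $X=\bigcup_{W\in I_X}W$ is the union of its $\cC$-submodules. Combining this with the previous paragraph, membership of $X$ in $\ind(\cC)$ is equivalent to $X$ being a union of submodules that are objects of $\cC$.

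It remains to note that any such union is automatically a \emph{generalized} $V$-module, not merely a weak one, which is immediate: each $W\in I_X$ is a grading-restricted generalized module, so every vector of $W$, and hence every vector of $X=\bigcup_W W$, is a finite sum of generalized $L(0)$-eigenvectors; thus $X=\bigoplus_{h\in\CC}X_{[h]}$ (see \cite[Remark 2.13]{HLZ1}). Therefore the objects of $\ind(\cC)$ are precisely the generalized $V$-modules that are unions of their $\cC$-submodules, and since $\ind(\cC)$ is by definition a full subcategory of $\cW$, the identification is one of full subcategories. I expect no genuine obstacle here: the proposition is essentially a repackaging of Proposition~\ref{prop:alpha_X}, and the only point to verify beyond it is the passage from ``weak module'' to ``generalized module,'' which follows at once from the grading-restrictedness of the objects of $\cC$.
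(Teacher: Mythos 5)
Your proposal is correct and follows essentially the same route as the paper: the paper likewise deduces the proposition directly from Proposition \ref{prop:alpha_X} (injectivity of $Q_X$ always, surjectivity exactly when $X$ is a union of its $\cC$-submodules, as shown in the second paragraph of that proof), and then upgrades ``weak module'' to ``generalized module'' by observing that every vector lies in a grading-restricted generalized submodule, citing \cite[Remark 2.13]{HLZ1}. No gaps; this is precisely the paper's argument.
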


This proposition shows in particular that $\cC$ itself is a subcategory of $\ind(\cC)$ (as it should be). In fact, when $W$ is an object of $\cC$, it is easy to show that the inverse of $Q_W$ is $\phi_{W}: W\rightarrow\varinjlim\alpha_W$. We can also use the characterization of $\ind(\cC)$ in the proposition to show that $\ind(\cC)$ is an abelian category:
\begin{prop}\label{prop:IndC_abelian}
 The direct limit completion $\ind(\cC)$ is a $\CC$-linear abelian category and is closed under arbitrary coproducts.
\end{prop}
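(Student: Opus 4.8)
The plan is to exploit that $\ind(\cC)$ is a \emph{full} subcategory of $\cW$, which by hypothesis is already a $\CC$-linear abelian category closed under arbitrary coproducts, together with the characterization of Proposition~\ref{prop:indC_as_unions}: an object of $\ind(\cC)$ is precisely a generalized $V$-module that is the union of its $\cC$-submodules. Because $\ind(\cC)$ is full in $\cW$, axiom (1) ($\CC$-linearity of morphism spaces with bilinear composition) is inherited at once, and axiom (2) holds since $0\in\cC\subseteq\ind(\cC)$ by Assumption~\ref{assum:Sec4}(1). The strategy for the remaining structure is to show that $\ind(\cC)$ is closed in $\cW$ under the formation of (arbitrary) coproducts, kernels, and cokernels; since $\ind(\cC)$ is full, these objects, computed in $\cW$, then automatically satisfy the corresponding universal properties within $\ind(\cC)$ and so serve as the coproducts, kernels, and cokernels there.

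For closure under coproducts, which subsumes axiom (3) on finite biproducts as well as the final claim, I would take a family $\{X_a\}$ in $\ind(\cC)$ and form $\bigoplus_a X_a$ in $\cW$. Any element has finite support $S$, and for $a\in S$ its $a$-component lies in some $\cC$-submodule $W_a\subseteq X_a$ by Proposition~\ref{prop:indC_as_unions}; then $\bigoplus_{a\in S} W_a$ is a finite direct sum of objects of $\cC$, hence an object of $\cC$ by Assumption~\ref{assum:Sec4}(2), and it contains the given element. Thus $\bigoplus_a X_a$ is a union of $\cC$-submodules and lies in $\ind(\cC)$.

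For kernels and cokernels (axiom (4)), let $f\colon X\to Y$ be a morphism in $\ind(\cC)$ and compute $\ker f$ and $\coker f=Y/\im f$ in $\cW$. Each $x\in\ker f$ lies in some $\cC$-submodule $W\subseteq X$, and $W\cap\ker f$ is a submodule of $W$, hence an object of $\cC$ by closure under submodules; so $\ker f$ is the union of these $W\cap\ker f$ and therefore lies in $\ind(\cC)$. Dually, each class in $Y/\im f$ is represented by some $y\in W$ for a $\cC$-submodule $W\subseteq Y$, and the image of $W$ in $Y/\im f$ is isomorphic to $W/(W\cap\im f)$, a quotient of $W$ and hence an object of $\cC$ by closure under quotients; so $Y/\im f$ is a union of $\cC$-submodules and lies in $\ind(\cC)$. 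I note that this step uses only Assumption~\ref{assum:Sec4}(1)--(2), not finite generation.

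Finally, for axiom (5) I would argue formally. In any additive category possessing kernels and cokernels, a morphism is a monomorphism iff its kernel is zero and an epimorphism iff its cokernel is zero; since the kernels and cokernels of $\ind(\cC)$ coincide with those of $\cW$ by the previous step, a morphism of $\ind(\cC)$ is mono (resp.\ epi) exactly when it is injective (resp.\ surjective). An injective $f\colon X\to Y$ is then the kernel of the canonical map $Y\to\coker f$, whose kernel object lies in $\ind(\cC)$ by the previous paragraph, and dually a surjective $f$ is the cokernel of $\ker f\hookrightarrow X$. I expect the one point requiring genuine care, rather than a true obstacle, to be exactly this: confirming that the kernel and cokernel constructions do not leave $\ind(\cC)$, which is precisely where closure of $\cC$ under submodules and quotients is essential, so that the standard additive-category characterization of monos and epis can be applied inside the full subcategory.
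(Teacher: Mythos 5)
Your proposal is correct and follows essentially the same route as the paper: both use Proposition \ref{prop:indC_as_unions} together with the closure properties in Assumption \ref{assum:Sec4}(2) to show that coproducts, kernels, and cokernels computed in $\cW$ remain unions of $\cC$-submodules (the paper phrases the kernel/cokernel step as closure of $\ind(\cC)$ under submodules and quotients, which is the identical argument applied to $\ker f$ and $Y/\im f$). Your extra verification of axiom (5) via the standard mono-is-the-kernel-of-its-cokernel argument is a routine formal point that the paper leaves implicit.
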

\begin{proof}
 Using Proposition \ref{prop:indC_as_unions}, any direct sum of generalized modules in $\ind(\cC)$ is still a union of $\cC$-submodules and hence is still an object of $\ind(\cC)$. Then to show that $\ind(\cC)$ is abelian, in particular closed under kernels and cokernels, we just need to show that $\ind(\cC)$ is closed under submodules and quotients.

 Suppose $\til{X}$ is any submodule of a generalized module $X$ in $\ind(\cC)$. By Proposition \ref{prop:indC_as_unions}, any $b\in\til{X}$ is contained in a $\cC$-submodule $W\subseteq X$. Since $\cC$ is closed under submodules, $W\cap\til{X}$ is a $\cC$-submodule of $\til{X}$ that contains $b$. It follows that $\til{X}$ is a union of submodules from $\cC$ and thus is an object of $\ind(\cC)$.

 Now consider the quotient $X/\til{X}$ where $X$ is a generalized module in $\ind(\cC)$. For any $b+\til{X}\in X/\til{X}$, there is a $\cC$-submodule $W\subseteq X$ such that $b\in W$, so that $b+\til{X}\in (W+\til{X})/\til{X}\cong W/W\cap\til{X}$. Since $\cC$ is closed under quotients, $b+\til{X}$ is contained in a $\cC$-submodule of $X/\til{X}$ and we conclude that $X/\til{X}$ is the union of its $\cC$-submodules. Consequently, $X/\til{X}$ is an object of $\ind(\cC)$, so that $\ind(\cC)$ is closed under quotients.
\end{proof}

We have now seen that abelian category structure on $\cC$ extends to abelian category structure on $\ind(\cC)$. In the
next section, we will show how to extend braided tensor category structure on $\cC$ to $\ind(\cC)$. In preparation for this, we conclude this section by describing how to extend any functor of the form
\begin{equation*}
 \cF: \cC^n\rightarrow\cC,
\end{equation*}
for $n\in\ZZ_+$, to a functor $\cW^n\rightarrow\ind(\cC)$; for example, in the next section we will take $n=2$ and $\cF$ a tensor product bifunctor on $\cC$. For such a functor $\cF$ and for weak modules $X_1,\ldots, X_n$, we define the direct system
\begin{equation*}
 \alpha_{\cF; X_1,\ldots, X_n}: I_{X_1}\times\cdots\times I_{X_n}\rightarrow\cC
\end{equation*}
on objects by
\begin{equation*}
 \alpha_{\cF;X_1,\ldots, X_n}(W_1,\ldots,W_n)=\cF(W_1,\ldots, W_n)
\end{equation*}
for $W_k\in I_{X_k}$, $k=1,\ldots, n$. For $\cC$-submodules $W_k\subseteq\til{W}_k$, $k=1,\ldots, n$, in $I_{X_k}$, we define the morphism $f_{W_1,\ldots, W_n}^{\til{W}_1,\ldots,\til{W}_n}=\alpha_{\cF;X_1,\ldots,X_n}((W_1,\ldots, W_n)\rightarrow(\til{W}_1,\ldots,\til{W}_n))$ to be
\begin{equation}\label{eqn:alphaF_on_morphisms}
 f_{W_1,\ldots, W_n}^{\til{W}_1,\ldots,\til{W}_n}=\cF(f_{W_1}^{\til{W}_1},\ldots, f_{W_n}^{\til{W}_n}).
\end{equation}

We can now define a functor $\widehat{\cF}:\cW^n\rightarrow\ind(\cC)$ on objects by
\begin{equation*}
 \widehat{\cF}(X_1,\ldots,X_n)=\varinjlim\alpha_{\cF;X_1,\ldots,X_n}.
\end{equation*}
We also need to define $\widehat{\cF}(F_1,\ldots,F_n)$ for morphisms $F_k: X_k\rightarrow\til{X}_k$ in $\cW$, $k=1,\ldots, n$. For this, observe that $F_k(W_k)=\im(F_k\circ i_{W_k})$ is a quotient of $W_k$ and hence a $\cC$-submodule of $\til{X}_k$. Thus if we use $F_k\vert_{W_k}: W_k\rightarrow F_k(W_k)$ to denote the $\cC$-morphism induced by $F_k\circ i_{W_k}$, we can attempt to define $\widehat{\cF}(F_1,\ldots,F_n)$ to be the unique homomorphism such that
\begin{equation*}
 \xymatrixcolsep{6.5pc}
 \xymatrix{
 \cF(W_1,\ldots, W_n) \ar[d]_{\phi_{W_1,\ldots, W_n}} \ar[r]^(.44){\cF(F_1\vert_{W_1},\ldots,F_n\vert_{W_n})} & \cF(F_1(W_1),\ldots,F_n(W_n)) \ar[d]^{\phi_{F_1(W_1),\ldots,F_n(W_n)}}  \\
 \varinjlim\alpha_{\cF;X_1,\ldots, X_n} \ar[r]_{\widehat{\cF}(F_1,\ldots, F_n)} & \varinjlim\alpha_{\cF;\til{X}_1,\ldots\til{X}_n} \\
 }
\end{equation*}
commutes for $(W_1,\ldots, W_n)\in I_{X_1}\times\cdots\times I_{X_n}$. From the universal property of the direct limit $\varinjlim \alpha_{\cF;X_1,\ldots,X_n}$, the morphism $\widehat{\cF}(F_1,\ldots,F_n)$ will indeed exist provided that
\begin{equation*}
 \left(\varinjlim\alpha_{\cF;\til{X}_1,\ldots,\til{X}_n}, \lbrace\phi_{\cF_1(W_1),\ldots,F_n(W_n)}\circ\cF(F_1\vert_{W_1},\ldots F_n\vert_{W_n})\rbrace_{(W_1,\ldots,W_n)\in I_{X_1}\times\cdots \times I_{X_n}}\right)
\end{equation*}
is a target of the direct system $\alpha_{\cF;X_1,\ldots,X_n}$. That is, we need to show
\begin{align*}
 \phi_{F_1(\til{W}_1),\ldots,F_n(\til{W}_n)} & \circ  \cF(F_1\vert_{\til{W}_1},\ldots,F_n\vert_{\til{W}_n})  \circ  f_{W_1,\ldots, W_n}^{\til{W}_1,\ldots,\til{W}_n} \nonumber\\
 &=  \phi_{F_1(W_1),\ldots,F_n(W_n)}\circ\cF\left(F_1\vert_{W_1},\ldots,F_n\vert_{W_n}\right)
\end{align*}
whenever $W_k\subseteq\til{W}_k$  are $\cC$-submodules in $I_{X_k}$ for $k=1,\ldots, n$. Using \eqref{eqn:alphaF_on_morphisms}, this follows from
\begin{align*}
& \phi_{F_1(\til{W}_1),\ldots,F_n(\til{W}_n)}\circ  \cF(F_1\vert_{\til{W}_1},\ldots,F_n\vert_{\til{W}_n})\circ\cF\left(f_{W_1}^{\til{W}_1},\ldots, f_{W_n}^{\til{W}_n}\right)\nonumber\\
& \hspace{4em}= \phi_{F_1(\til{W}_1),\ldots,F_n(\til{W}_n)}\circ\cF\left(F_1\vert_{\til{W}_1}\circ f_{W_1}^{\til{W}_1},\ldots,F_n\vert_{\til{W}_n}\circ f_{W_n}^{\til{W}_n}\right)\nonumber\\
& \hspace{4em}= \phi_{F_1(\til{W}_1),\ldots,F_n(\til{W}_n)}\circ\cF\left((f_{F_1(W_1)}^{F_1(\til{W}_1)}\circ F_1\vert_{W_1},\ldots,f_{F_n(W_n)}^{F_n(\til{W}_n)}\circ F_n\vert_{W_n}\right)\nonumber\\
& \hspace{4em} =  \phi_{F_1(\til{W}_1),\ldots,F_n(\til{W}_n)}\circ\cF\left(f_{F_1(W_1)}^{F_1(\til{W}_1)},\ldots, ,f_{F_n(W_n)}^{F_n(\til{W}_n)}\right)\circ\cF\left(F_1\vert_{W_1},\ldots,F_n\vert_{W_n}\right)\nonumber\\
& \hspace{4em} =  \phi_{F_1(W_1),\ldots,F_n(W_n)}\circ\cF\left(F_1\vert_{W_1},\ldots,F_n\vert_{W_n}\right),
\end{align*}
so $\widehat{\cF}(F_1,\ldots,F_n)$ is well defined.

To verify that $\widehat{\cF}$ is actually a functor, we first observe that $\widehat{\cF}(\Id_{X_1},\ldots,\Id_{X_n})$ satisfies
\begin{align*}
 \widehat{\cF}(\Id_{X_1},\ldots,\Id_{X_n})\circ\phi_{W_1,\ldots, W_n} =\phi_{W_1,\ldots, W_n}\circ\cF(\Id_{W_1},\ldots,\Id_{W_n})=\phi_{W_1,\ldots, W_n}
\end{align*}
for all $(W_1,\ldots, W_n)\in I_{X_1}\times\cdots\times I_{X_n}$ and thus is the identity on $\varinjlim\alpha_{\cF;X_1,\ldots,X_n}$. Then, for two composable morphisms $(F_1,\ldots, F_n)$ and $(G_1,\ldots, G_n)$ in $\cW^n$, we have
\begin{align*}
 \widehat{\cF}(F_1,\ldots, & F_n)\circ  \widehat{\cF}(G_1,\ldots, G_n)\circ\phi_{W_1,\ldots W_n}\nonumber\\
 &= \widehat{\cF}(F_1,\ldots,F_n)\circ\phi_{G_1(W_1),\ldots,G_n(W_n)}\circ\cF(G_1\vert_{W_1},\ldots,G_n\vert_{W_n})\nonumber\\
 &=\phi_{F_1(G_1(W_1)),\ldots,F_n(G_n(W_n))}\circ\cF(F_1\vert_{G_1(W_1)},\ldots, F_n\vert_{G_n(W_n)})\circ\cF(G_1\vert_{W_1},\ldots,G_n\vert_{W_n})\nonumber\\
 & =\phi_{(F_1\circ G_1)(W_1),\ldots,(F_n\circ G_n)(W_n)}\circ\cF((F_1\circ G_1)\vert_{W_1},\ldots,(F_n\circ G_n)\vert_{W_n})\nonumber\\
 & =\widehat{\cF}(F_1\circ G_1,\ldots,F_n\circ G_n)\circ\phi_{W_1,\ldots, W_n}
\end{align*}
for all $(W_1,\ldots, W_n)\in I_{X_1}\times\cdots\times I_{X_n}$, and thus we must have
\begin{equation*}
 \widehat{\cF}(F_1,\ldots, F_n)\circ\widehat{\cF}(G_1,\ldots, G_n) =\widehat{\cF}(F_1\circ G_1,\ldots,F_n\circ G_n).
\end{equation*}
This proves that $\widehat{\cF}:\cW^n\rightarrow\ind(\cC)$ indeed satisfies the properties of a functor.

We have now shown that a functor $\cF: \cC^n\rightarrow\cC$ can be extended to a functor $\widehat{\cF}:\cW^n\rightarrow\ind(\cC)$. We can also extend any natural transformation $\Psi:\cF\rightarrow\cG$ between two functors $\cF,\cG: \cC^n\rightarrow\cC$ to a natural transformation $\widehat{\Psi}: \widehat{\cF}\rightarrow\widehat{\cG}$.  Indeed, for objects $X_1,\ldots, X_n$ in $\cW$, we define
\begin{equation*}
 \widehat{\Psi}_{X_1,\ldots,X_n}: \widehat{\cF}(X_1,\ldots, X_n)\rightarrow\widehat{\cG}(X_1,\ldots, X_n)
\end{equation*}
to be the unique morphism such that the diagram
\begin{equation*}
 \xymatrixcolsep{4pc}
 \xymatrix{
 \cF(W_1,\ldots, W_n) \ar[r]^{\Psi_{W_1,\ldots, W_n}} \ar[d]_{\phi_{W_1,\ldots, W_n}} & \cG(W_1,\ldots, W_n) \ar[d]^{\psi_{W_1,\ldots, W_n}} \\
 \varinjlim\alpha_{\cF;X_1,\ldots, X_n} \ar[r]_{\widehat{\Psi}_{X_1,\ldots, X_n}} & \varinjlim\alpha_{\cG;X_1\ldots, X_n} \\
 }
\end{equation*}
commutes for all $(W_1,\ldots, W_n)\in I_{X_1}\times\cdots\times I_{X_n}$. The existence and uniqueness of $\widehat{\Psi}_{X_1,\ldots,X_n}$ follows from the universal property of the direct limit $\varinjlim \alpha_{\cF;X_1,\ldots,X_n}$ together with the calculation
\begin{align*}
 \psi_{\til{W}_1,\ldots,\til{W}_n}\circ \Psi_{\til{W}_1,\ldots,\til{W}_n} &\circ  \cF(f_{W_1}^{\til{W}_n},\ldots, f_{W_n}^{\til{W}_n})\nonumber\\
 &= \psi_{\til{W}_1,\ldots,\til{W}_n}\circ\cG(f_{W_1}^{\til{W}_1},\ldots,f_{W_n}^{\til{W}_n})\circ\Psi_{W_1,\ldots, W_n}\nonumber\\
 &= \psi_{W_1,\ldots,W_n}\circ\Psi_{W_1,\ldots,W_n},
\end{align*}
where $W_k\subseteq\til{W}_k$ are $\cC$-submodules in $I_{X_k}$ for each $k$. To show that $\widehat{\Psi}$ is actually a natural transformation, let $F_k: X_k\rightarrow\til{X}_k$, $k=1,\ldots, n$, be morphisms in $\cW$ and consider the commutative diagrams
\begin{equation*}
 \xymatrixcolsep{6pc}
 \xymatrix{
 \cF(W_1,\ldots, W_n) \ar[r]^{\Psi_{W_1,\ldots, W_n}} \ar[d]_{\phi_{W_1,\ldots, W_n}} & \cG(W_1,\ldots, W_n) \ar[d]^{\psi_{W_1,\ldots, W_n}} \ar[r]^(.44){\cG(F_1\vert_{W_1},\ldots F_n\vert_{W_n})} & \cG(F_1(W_1),\ldots,F_n(W_n)) \ar[d]^{\psi_{F_1(W_1),\ldots,F_n(W_n)}}  \\
 \varinjlim\alpha_{\cF;X_1,\ldots, X_n} \ar[r]_{\widehat{\Psi}_{X_1,\ldots, X_n}} & \varinjlim\alpha_{\cG;X_1\ldots, X_n} \ar[r]_{\widehat{\cG}(F_1,\ldots,F_n)} & \varinjlim\alpha_{\cG;\til{X}_1,\ldots,\til{X}_n} \\
 }
\end{equation*}
and
\begin{equation*}
 \xymatrixcolsep{5.2pc}
 \xymatrix{
\cF(W_1,\ldots, W_n) \ar[d]_{\phi_{W_1,\ldots,W_n}} \ar[r]^(.44){\cF(F_1\vert_{W_1}\,\ldots F_n\vert_{W_n})} &  \cF(F_1(W_1),\ldots, F_n(W_n)) \ar[r]^{\Psi_{F_1(W_1),\ldots, F_n(W_n)}} \ar[d]_{\phi_{F_1(W_1),\ldots, F_n(W_n)}} & \cG(F_1(W_1),\ldots, F_n(W_n)) \ar[d]_{\psi_{F_1(W_1),\ldots, F_n(W_n)}}  \\
\varinjlim\alpha_{\cF;X_1,\ldots, X_n} \ar[r]_{\widehat{\cF}(F_1,\ldots,F_n)} & \varinjlim\alpha_{\cF;\til{X}_1,\ldots, \til{X}_n} \ar[r]_{\widehat{\Psi}_{\til{X}_1,\ldots, \til{X}_n}} & \varinjlim\alpha_{\cG;\til{X}_1\ldots, \til{X}_n}   \\
 }
\end{equation*}
 for any $(W_1,\ldots, W_n)\in I_{X_1}\times\cdots\times I_{X_n}$. The top rows of both diagrams are identical because $\Psi$ is a natural transformation, so we get
 \begin{equation*}
  \widehat{\cG}(F_1,\ldots,F_n)\circ\widehat{\Psi}_{X_1,\ldots,X_n}\circ\phi_{W_1,\ldots,W_n}=\widehat{\Psi}_{\til{X}_1,\ldots,\til{X}_n}\circ\widehat{\cF}(F_1,\ldots,F_n)\circ\phi_{W_1,\ldots,W_n}
 \end{equation*}
for all $(W_1,\ldots,W_n)\in I_{X_1}\times\cdots\times I_{X_n}$, and thus also
\begin{equation*}\
 \widehat{\cG}(F_1,\ldots,F_n)\circ\widehat{\Psi}_{X_1,\ldots,X_n}=\widehat{\Psi}_{\til{X}_1,\ldots,\til{X}_n}\circ\widehat{\cF}(F_1,\ldots,F_n)
\end{equation*}
since the direct limit $\varinjlim\alpha_{\cF;X_1,\ldots,X_n}$ is spanned by the images of the $\phi_{W_1,\ldots,W_n}$. This proves that $\widehat{\Psi}$ is a natural transformation.

The first example of a functor $\cF: \cC^n\rightarrow\cC$ that we would like to consider is the identity functor $\Id_\cC: \cC\rightarrow\cC$. In this case,  $\widehat{\Id}_\cC: \cW\rightarrow\ind(\cC)$ is defined on objects by
  \begin{equation*}
   \widehat{\Id}_\cC(X) =\varinjlim\alpha_X,
  \end{equation*}
while for a morphism $F: X\rightarrow\til{X}$ in $\cW$, $\widehat{\Id}_\cC(F)$ is characterized by the commutativity of
\begin{equation*}
\xymatrixcolsep{3pc}
 \xymatrix{
 W \ar[d]_{\phi_W} \ar[r]^{F\vert_W} & F(W) \ar[d]^{\phi_{F(W)}} \\
 \varinjlim\alpha_X \ar[r]_{\widehat{\Id}_\cC(F)} & \varinjlim\alpha_{\til{X}} \\
 }
\end{equation*}
for all $W\in I_X$. The morphisms $Q_X: \varinjlim\alpha_X\rightarrow X$ in Proposition \ref{prop:alpha_X} for weak modules $X$  determine a natural transformation $Q: \widehat{\Id}_\cC\rightarrow\Id_\cW$: to show this, we use commutative diagrams
\begin{equation*}
\xymatrixcolsep{3pc}
 \xymatrix{
  W \ar[d]_{\phi_W} \ar[r]^{F\vert_W}  & F(W) \ar[d]_{\phi_{F(W)}} \ar[rd]^{i_{F(W)}} & \\
 \varinjlim\alpha_X \ar[r]_{\widehat{\Id}_\cC(F)} & \varinjlim\alpha_{\til{X}} \ar[r]_(.6){Q_{\til{X}}} & \til{X}\\
 } \quad\text{and}\qquad
 \xymatrix{
 W \ar[d]_{\phi_W} \ar[rd]^{i_W} & & \\
 \varinjlim\alpha_X\ar[r]_{Q_X} & X \ar[r]_{F} & \til{X} \\
 }
\end{equation*}
for any morphism $F: X\rightarrow\til{X}$ and for $W\in I_X$. Then $Q_{\til{X}}\circ\widehat{\Id}_\cC(F)=F\circ Q_X$ as required since
\begin{equation*}
 i_{F(W)}\circ F\vert_W = F\circ i_W.
\end{equation*}
for all $W\in I_X$. By Proposition \ref{prop:alpha_X}, $Q$ is a natural isomorphism when restricted to $\ind(\cC)$.

\section{The direct limit completion as a braided tensor category}\label{sec:ind_tens_cat}

We continue to work under Assumption \ref{assum:Sec4} and now additionally impose:
\begin{assum}\label{assum:Sec5}
The category $\cC$ of grading-restricted generalized $V$-modules satisfies:
\begin{enumerate}

\item The vertex operator algebra $V$ is an object of $\cC$.

\item The category $\cC$ is a braided tensor category with unit object $V$. We will use $\tens$ to denote the tensor product bifunctor on $\cC$, $l$ and $r$ for the left and right unit isomorphisms, $\cA$ for the associativity isomorphisms, and $\cR$ for the braiding isomorphisms.

\item The braided tensor category $\cC$ has a twist natural isomorphism $\theta: \Id_\cC\rightarrow\Id_\cC$ such that $\theta_V=\Id_V$ and the balancing equation
\begin{equation*}
 \theta_{W_1\tens W_2}=\cR_{W_2,W_1}\circ\cR_{W_1,W_2}\circ(\theta_{W_1}\tens\theta_{W_2})
\end{equation*}
holds for modules $W_1$, $W_2$ in $\cC$.

 \item For any module $W$ in $\cC$, the functors $W\tens\bullet$ and $\bullet\tens W$ are right exact.
\end{enumerate}
\end{assum}
\begin{rem}
For this section, we do not assume that the braided tensor category structure on $\cC$ is necessarily the vertex algebraic one of \cite{HLZ8} described in Section \ref{sec:vertex_tensor}. In fact, the results of this section hold for more general abelian categories $\cC$ and $\cW$ satisfying suitable assumptions. If the braided tensor category structure on $\cC$ is indeed the vertex algebraic one, the right exactness of $W\tens\bullet$ and $\bullet\tens W$ are automatic \cite[Proposition 4.26]{HLZ3}.
\end{rem}

Our goal in this section is to extend the braided tensor category structure on $\cC$ to $\ind(\cC)$ in a natural way. To obtain a tensor product bifunctor on $\ind(\cC)$, and indeed on $\cW$, we notice that $\tens:\cC\times\cC\rightarrow\cC$ induces
   $$\widehat{\tens}: \cW\times\cW\rightarrow\ind(\cC),$$
as described at the end of the preceding section.   For objects $X_1$, $X_2$ in $\cW$, we use the notation $\alpha_{X_1}\tens\alpha_{X_2}$ for the direct system $\alpha_{\tens;X_1,X_2}$ of the preceding section, so that by definition,
\begin{equation*}
 X_1\widehat{\tens} X_2=\varinjlim\alpha_{X_1}\tens\alpha_{X_2}.
\end{equation*}
Then the tensor product of morphisms $F_1:X_1\rightarrow\til{X}_1$ and $F_2: X_2\rightarrow\til{X}_2$ in $\cW$ is characterized by the commutative diagrams
\begin{equation*}
 \xymatrixcolsep{4pc}
 \xymatrix{
 W_1\tens W_2 \ar[d]_{\phi_{W_1,W_2}} \ar[r]^(.42){F_1\vert_{W_1}\tens F_2\vert_{W_2}}  & F_1(W_1)\tens F_2(W_2) \ar[d]^{\phi_{F_1(W_1),F_2(W_2)}} \\
 \varinjlim\alpha_{X_1}\tens\alpha_{X_2} \ar[r]_{F_1\widehat{\tens} F_2} & \varinjlim\alpha_{\til{X}_1}\tens\alpha_{\til{X}_2} \\
 }
\end{equation*}
for $\cC$-submodules $W_1\subseteq X_1$ and $W_2\subseteq X_2$.

We proceed to construct unit, associativity, braiding, and twist isomorphisms on $\ind(\cC)$. We will see that the braiding on $\ind(\cC)$ can be obtained simply as the natural isomorphism $\widehat{\cR}$ induced from the braiding isomorphisms in $\cC$, as described in the previous section. The unit and associativity isomorphisms on $\ind(\cC)$ cannot quite be obtained in this way, so we will need additional notation: we will denote them using fraktur fonts to distinguish them from the structure isomorphisms of the original braided tensor category $\cC$. Actually, we will construct unit, associativity, and braiding homomorphisms on the entire weak module category $\cW$, but the left and right unit homomorphisms $\mathfrak{l}_X$ and $\mathfrak{r}_X$ will not be isomorphisms unless $X$ is an object of $\ind(\cC)$. For example, the definition of $\widehat{\tens}$ shows that $V\widehat{\tens} X =0$ if $X$ is a weak $V$-module whose only $\cC$-submodule is $0$, so in this case $\mathfrak{l}_X$ would be $0$.

The unit object of $\ind(\cC)$ will be $V$, just as in $\cC$. For a weak module $X$, we would like to define left and right unit morphisms $\mathfrak{l}_X: V\widehat{\tens} X\rightarrow X$ and $\mathfrak{r}_X: X\widehat{\tens}V\rightarrow X$ to be the unique morphisms such that the diagrams
 \begin{equation*}
 \xymatrixcolsep{3.5pc}
 \xymatrix{
 U\tens W \ar[d]_{\phi_{U,W}} \ar[r]^{i_U\tens\Id_{W}} &V\tens W \ar[d]^{i_W\circ l_{W}} \\
 \varinjlim\alpha_V\tens\alpha_X \ar[r]_(.6){\mathfrak{l}_X} & X \\
 } \quad\text{and}\quad
 \xymatrix{
 W\tens U \ar[d]_{\phi_{W,U}} \ar[r]^{\Id_{W}\tens i_U} & W\tens V \ar[d]^{i_W\circ r_{W}} \\
 \varinjlim\alpha_X\tens\alpha_V \ar[r]_(.6){\mathfrak{r}_X} & X \\
 }
 \end{equation*}
commute for all objects $(U,W)\in I_V\times I_X$. The universal property of $\varinjlim\alpha_V\tens\alpha_X$ guarantees that $\mathfrak{l}_X$ will exist provided
\begin{equation*}
 \left(X,\lbrace i_W\circ l_W\circ(i_U\tens\Id_W)\rbrace_{(U,W)\in I_V\times I_X}\right)
\end{equation*}
is a target of the direct system $\alpha_V\tens\alpha_X$. This follows from the calculation
\begin{align*}
 i_{\til{W}}\circ l_{\til{W}}\circ(i_{\til{U}}\tens\Id_{\til{W}})\circ(f_U^{\til{U}}\tens f_{W}^{\til{W}}) & = i_{\til{W}}\circ l_{\til{W}}\circ(i_U\tens f^{\til{W}}_W)\nonumber\\
 & =i_{\til{W}}\circ f^{\til{W}}_W\circ l_{W}\circ(i_U\tens\Id_{W})\nonumber\\
 & =i_W\circ l_{W}\circ(i_U\tens\Id_{W})
\end{align*}
for any $U\subseteq\til{U}$ in $I_V$ and $W\subseteq\til{W}$ in $I_X$, and $\mathfrak{r}_X$ is similarly well defined.

To show that $\mathfrak{l}: V\widehat{\tens}\bullet\rightarrow\Id_\cW$ is a natural transformation, we need to show that 
\begin{equation}\label{eqn:l_nat_trans}
\mathfrak{l}_{\til{X}}\circ(\Id_V\widehat{\tens}F)=F\circ\mathfrak{l}_X
\end{equation}
for any morphism $F: X\rightarrow\til{X}$ in $\cW$. Using the commutative diagram
\begin{equation*}
  \xymatrixcolsep{3.5pc}
 \xymatrix{
 U\tens W \ar[r]^(.47){\Id_U\tens F\vert_W} \ar[d]_{\phi_{U,W}} &
 U\tens F(W) \ar[d]_{\phi_{U,F(W)}} \ar[r]^(.5){i_U\tens\Id_{F(W)}} & V\tens F(W) \ar[d]^{i_{F(W)}\circ l_{F(W)}} \\
 \varinjlim\alpha_{V}\tens\alpha_X \ar[r]_{\Id_{V}\widehat{\tens} F} & \varinjlim\alpha_{V}\tens\alpha_{\til{X}} \ar[r]_(.6){\mathfrak{l}_{\til{X}}} & \til{X} \\
 }
\end{equation*}
for $(U,W)\in I_V\times I_X$, we get
\begin{align*}
 \mathfrak{l}_{\til{X}}\circ(\Id_V\widehat{\tens}F)\circ\phi_{U,W} & = i_{F(W)}\circ l_{F(W)}\circ(i_U\tens F\vert_W)\nonumber\\
 &= i_{F(W)}\circ F\vert_W\circ l_W\circ(i_U\tens\Id_W)\nonumber\\
 &= F\circ i_W\circ l_W\circ(i_U\tens\Id_W)\nonumber\\
 &=F\circ \mathfrak{l}_X\circ\phi_{U,W}
\end{align*}
for all $(U,W)\in I_V\times I_X$, which proves \eqref{eqn:l_nat_trans} since $\varinjlim\alpha_V\tens\alpha_X$ is spanned by the images of the $\phi_{U,W}$. The proof that $\mathfrak{r}: \bullet\widehat{\tens}\vac\rightarrow\Id_\cW$ is a natural transformation is similar.

Although $\mathfrak{l}_X$ and $\mathfrak{r}_X$ cannot be isomorphisms when $X$ is not in $\ind(\cC)$, we do have:
\begin{prop}
 For any weak $V$-module $X$, $\mathfrak{l}_X$ and $\mathfrak{r}_X$ are injective. If $X$ is an object of $\ind(\cC)$, then $\mathfrak{l}_X$ and $\mathfrak{r}_X$ are also surjective.
\end{prop}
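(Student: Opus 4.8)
The plan is to argue element-wise, treating $\mathfrak{l}_X$ in detail (the argument for $\mathfrak{r}_X$ is identical with $r_W$ in place of $l_W$). The two facts I will lean on are that $V$ is itself an object of $I_V$, since $V\in\cC$, so that $(V,W)\geq(U,W)$ for every $U\in I_V$, and that by Proposition \ref{prop:dir_lim_union} every element of $V\widehat{\tens} X=\varinjlim\alpha_V\tens\alpha_X$ has the form $\phi_{U,W}(\xi)$ for some $(U,W)\in I_V\times I_X$ and $\xi\in U\tens W$.

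For injectivity, I would suppose $\phi_{U,W}(\xi)\in\ker\mathfrak{l}_X$. The defining diagram for $\mathfrak{l}_X$ gives
\[
\mathfrak{l}_X(\phi_{U,W}(\xi))=i_W\big(l_W\big((i_U\tens\Id_W)(\xi)\big)\big),
\]
and since $l_W$ is an isomorphism and $i_W$ is an inclusion, the composite $i_W\circ l_W$ is injective; hence $(i_U\tens\Id_W)(\xi)=0$. Next I would observe that $i_U\tens\Id_W$ is precisely the transition morphism $f_{U,W}^{V,W}=f_U^V\tens f_W^W$ of the direct system $\alpha_V\tens\alpha_X$ by \eqref{eqn:alphaF_on_morphisms}, recalling that $f_U^V=i_U$ and $f_W^W=\Id_W$; this is exactly where $V\in I_V$ is needed. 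Since $(V,W)\geq(U,W)$, Lemma \ref{lem:ker_phi_i} yields $\ker f_{U,W}^{V,W}\subseteq\ker\phi_{U,W}$, so $\xi\in\ker\phi_{U,W}$ and therefore $\phi_{U,W}(\xi)=0$, proving injectivity.

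For surjectivity when $X\in\ind(\cC)$, I would take any $x\in X$. By Proposition \ref{prop:indC_as_unions}, $X$ is the union of its $\cC$-submodules, so $x=i_W(w)$ for some $W\in I_X$ and $w\in W$. Because $l_W\colon V\tens W\to W$ is surjective, I would choose $\eta\in V\tens W$ with $l_W(\eta)=w$. Taking $U=V\in I_V$, the map $i_V\tens\Id_W$ is the identity on $V\tens W$, so the defining diagram yields $\mathfrak{l}_X(\phi_{V,W}(\eta))=i_W(l_W(\eta))=i_W(w)=x$. Thus $\mathfrak{l}_X$ is surjective.

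The main subtlety, and really the only one, is in the injectivity step. One is tempted to conclude $\xi=0$ directly from $(i_U\tens\Id_W)(\xi)=0$, but $i_U\tens\Id_W$ need not be injective, since under Assumption \ref{assum:Sec5} the functor $\bullet\tens W$ is only assumed right exact. The resolution is not to demand injectivity of this map at all, but to recognize it as a structure morphism of the direct system and invoke the characterization of $\ker\phi_{U,W}$ in Lemma \ref{lem:ker_phi_i}; the hypothesis $V\in\cC$ is exactly what makes $(V,W)$ an index in the system and identifies $i_U\tens\Id_W$ with $f_{U,W}^{V,W}$.
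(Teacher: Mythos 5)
Your proof is correct and follows essentially the same route as the paper: both identify $i_U\tens\Id_W$ with the transition morphism $f_U^V\tens f_W^W$ (using $V\in\cC$), use injectivity of $i_W\circ l_W$, and then conclude $\phi_{U,W}(\xi)=0$ from the direct-system compatibility, with the identical surjectivity argument via $\phi_{V,W}$. The only cosmetic difference is that you invoke Lemma \ref{lem:ker_phi_i} for the containment $\ker f_{U,W}^{V,W}\subseteq\ker\phi_{U,W}$, while the paper just writes $\phi_{U,W}=\phi_{V,W}\circ(f_U^V\tens f_W^W)$ directly, which is the same (easy) fact.
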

\begin{proof}
 We give the proofs for $\mathfrak{l}_X$, since the proofs for $\mathfrak{r}_X$ are essentially the same. To show that $\mathfrak{l}_X$ is injective, suppose $\mathfrak{l}_X(b)=0$ for $b\in V\widehat{\tens}X$. We may assume that $b=\phi_{U,W}(\til{w})$ for $U\subseteq V$ in $I_V$, $W\subseteq X$ in $I_X$, and $\til{w}\in U\tens W$. Then
 \begin{equation*}
  0=\mathfrak{l}_X(b)=\mathfrak{l}_X(\phi_{U,W}(\til{w}))=(i_W\circ l_W\circ(i_U\tens\Id_W))(\til{w}) =(i_W\circ l_W)\left((f_U^V\tens f_W^W)(\til{w})\right).
 \end{equation*}
 Since $i_W\circ l_W$ is injective, this means $(f_U^V\tens f_W^W)(\til{w})=0,$ and then
 \begin{equation*}
  b=\phi_{U,W}(\til{w})=\left(\phi_{V,W}\circ(f^V_U\tens f^W_W)\right)(\til{w})=0
 \end{equation*}
as well. Thus $\ker \mathfrak{l}_X=0$.

To show that $\mathfrak{l}_X$ is surjective when $X$ is in $\ind(\cC)$, suppose $b\in X$. Since $X$ is a union of $\cC$-submodules by Proposition \ref{prop:indC_as_unions}, we have $b=i_W(w)$ for some $W\in I_X$ and $w\in W$. Then because $l_W$ is surjective, $w=l_W(\til{w})$ for some $\til{w}\in V\tens W$, so
\begin{equation*}
 b=i_W(l_W(\til{w})) =\mathfrak{l}_X(\phi_{V,W}(\til{w}))\in\im \mathfrak{l}_X.
\end{equation*}
Thus $\im \mathfrak{l}_X=X$.
\end{proof}

The next task is to construct natural associativity isomorphisms in $\ind(\cC)$ (and indeed in $\cW$). The triple tensoring functors $\tens\circ(\Id_\cC\times\tens), \tens\circ(\tens\times\Id_\cC): \cC\times\cC\times\cC\rightarrow\cC$ induce functors $\cW\times\cW\times\cW\rightarrow\ind(\cC)$; we denote them by
\begin{equation*}
 (X_1,X_2,X_3)\mapsto\varinjlim\alpha_{X_1}\tens(\alpha_{X_2}\tens\alpha_{X_3}),\qquad(X_1,X_2,X_3)\mapsto\varinjlim(\alpha_{X_1}\tens\alpha_{X_2})\tens\alpha_{X_3}
\end{equation*}
on objects. Moreover, as in the previous section, there is a natural isomorphism $\widehat{\cA}$ between these two functors induced by the natural associativity isomorphism $\cA: \tens\circ(\Id_\cC\times\tens)\rightarrow\tens\circ(\tens\times\Id_\cC)$. However, these two functors from $\cW\times\cW\times\cW$ to $\ind(\cC)$ are \textit{not} equal to
\begin{equation*}
 (X_1,X_2,X_3)\mapsto X_1\widehat{\tens}(X_2\widehat{\tens} X_3),\qquad (X_1,X_2,X_3)\mapsto (X_1\widehat{\tens}X_2)\widehat{\tens} X_3.
\end{equation*}
To get associativity isomorphisms on $\cW$, we need an analogue of Fubini's Theorem showing that the two iterated tensor products are isomorphic to the two multiple tensor products.

 We first need to construct  natural transformations
\begin{align*}
 T_{X_1,(X_2,X_3)}: & \varinjlim\alpha_{X_1}\tens(\alpha_{X_2}\tens\alpha_{X_3}) \rightarrow X_1\widehat{\tens}(X_2\widehat{\tens} X_3)\quad\left( \;\; = \varinjlim\alpha_{X_1}\tens\alpha_{X_2\widehat{\tens} X_3} \;\; \right)\nonumber\\
 T_{(X_1,X_2),X_3}: & \varinjlim\, (\alpha_{X_1}\tens\alpha_{X_2})\tens\alpha_{X_3} \rightarrow (X_1\widehat{\tens}X_2)\widehat{\tens} X_3\quad\left( \;\; = \varinjlim\alpha_{X_1\widehat{\tens}X_2}\tens\alpha_{X_3} \;\; \right)
\end{align*}
Actually, for proving the pentagon axiom later, we need more general homomorphisms. For weak modules $X_1$, $X_2$ and a direct system $\alpha: I\rightarrow\cC$, we define the direct systems
\begin{align*}
 \alpha\tens(\alpha_{X_1}\tens\alpha_{X_2}): I\times I_{X_1}\times I_{X_2} & \rightarrow \cC\nonumber\\
 (i, W_1, W_2) & \mapsto W(i)\tens(W_1\tens W_2)\nonumber\\
 f_{i,W_1,W_2}^{j,\til{W}_1,\til{W}_2} & = f^j_i\tens f_{W_1}^{\til{W}_1}\tens f_{W_2}^{\til{W}_2}
\end{align*}
and
\begin{align*}
 \alpha\tens\alpha_{X_1\widehat{\tens} X_2}: I\times I_{X_1\widehat{\tens}X_2} & \rightarrow\cC\nonumber\\
 (i,W_{1,2}) & \mapsto W(i)\tens W_{1,2}\nonumber\\
 f_{i,W_{1,2}}^{j,\til{W}_{1,2}} & = f_i^j\tens f_{W_{1,2}}^{\til{W}_{1,2}}.
\end{align*}
Recall that $W_{1,2}$ and $\til{W}_{1,2}$ here are $\cC$-submodules of $X_1\widehat{\tens} X_2$; we cannot necessarily assume they factor as tensor products of $\cC$-submodules in $X_1$ and $X_2$. We define $(\alpha_{X_1}\tens\alpha_{X_2})\tens\alpha$, $\alpha_{X_1\widehat{\tens}X_2}\tens\alpha$ similarly. Also for $W_1\in I_{X_1}$ and $W_2\in I_{X_2}$, we write the morphism $\phi_{W_1,W_2}:W_1\tens W_2\rightarrow X_1\widehat{\tens} X_2$ as a composition $\phi_{W_1,W_2}=i_{W_1,W_2}\circ\phi_{W_1,W_2}'$, where
\begin{equation*}
 \phi_{W_1,W_2}': W_1\tens W_2\rightarrow\im\phi_{W_1,W_2}
\end{equation*}
is the surjection induced by $\phi_{W_1,W_2}$ and
\begin{equation*}
 i_{W_1,W_2}: \im\phi_{W_1,W_2}\rightarrow X_1\widehat{\tens} X_2
\end{equation*}
is the inclusion of the image into $X_1\widehat{\tens}X_2$.
Now we can attempt to define
 \begin{align*}
 T_{\alpha,(X_1,X_2)}& :  \varinjlim\alpha\tens(\alpha_{X_1}\tens\alpha_{X_2}) \rightarrow\varinjlim \alpha\tens\alpha_{X_1\widehat{\tens} X_2}\nonumber\\
 T_{(X_1,X_2),\alpha} & :  \varinjlim\, (\alpha_{X_1}\tens\alpha_{X_2})\tens\alpha \rightarrow  \varinjlim\alpha_{X_1\widehat{\tens}X_2}\tens\alpha
\end{align*}
to be the unique homomorphisms such that the diagrams
\begin{equation*}
\xymatrixcolsep{4.5pc}
 \xymatrix{
 W(i)\tens(W_1\tens W_2) \ar[d]_{\phi_{i,W_1,W_2}} \ar[r]^(.5){\Id_{W(i)}\tens\phi_{W_1,W_2}'} & W(i)\tens \im\phi_{W_1,W_2} \ar[d]^{\phi_{i,\,\im\phi_{W_1,W_2}}}  \\
 \varinjlim\alpha\tens(\alpha_{X_1}\tens\alpha_{X_2}) \ar[r]_(.55){T_{\alpha,(X_1,X_2)}} &
 \varinjlim\alpha\tens\alpha_{X_1\widehat{\tens}X_2}
 }
\end{equation*}
and
\begin{equation*}
\xymatrixcolsep{4.5pc}
 \xymatrix{
 (W_1\tens W_2)\tens W(i) \ar[r]^(.5){\phi_{W_1,W_2}'\tens\Id_{W(i)}} \ar[d]_{\phi_{W_1,W_2,i}} & \im\phi_{W_1,W_2}\tens W(i) \ar[d]^{\phi_{\im\phi_{W_1,W_2},\,i}}  \\
 \varinjlim(\alpha_{X_1}\tens\alpha_{X_2})\tens\alpha \ar[r]_(.55){T_{(X_1,X_2),\alpha}} &
 \varinjlim\alpha_{X_1\widehat{\tens}X_2}\tens\alpha
 }
\end{equation*}
commute for $(i,W_1,W_2)\in I\times I_{X_1}\times I_{X_2}$. To verify that $T_{\alpha,(X_1,X_2)}$ exists, suppose $i\leq j$ in $I$ and $W_k\subseteq\til{W}_k$ in $I_{X_k}$ for $k=1,2$. Then
\begin{align*}
 \phi_{j,\,\im\phi_{\til{W}_1,\til{W}_2}} & \circ  (\Id_{W(j)}
 \tens\phi_{\til{W}_1,\til{W}_2}')\circ\left(f_i^j\tens(f_{W_1}^{\til{W}_1}\tens f_{W_2}^{\til{W}_2})\right) \nonumber\\
 & =\phi_{j,\,\im\phi_{\til{W}_1,\til{W}_2}}\circ\left(f_i^j\tens f_{\im\phi_{W_1,W_2}}^{\im\phi_{\til{W}_1,\til{W}_2}}\right)\circ(\Id_{W(i)}\tens\phi_{W_1,W_2}')\nonumber\\
 & =\phi_{i,\,\im\phi_{W_1,W_2}}\circ(\Id_{W(i)}\tens\phi_{W_1,W_2}'),
\end{align*}
where the first equality follows from
\begin{align*}
 i_{\til{W}_1,\til{W}_2}\circ\phi'_{\til{W}_1,\til{W}_2}\circ & (f_{W_1}^{\til{W}_1}\tens f_{W_2}^{\til{W}_2}) =\phi_{\til{W}_1,\til{W}_2}\circ (f_{W_1}^{\til{W}_1}\tens f_{W_2}^{\til{W}_2})\nonumber\\
 &= \phi_{W_1,W_2}=i_{W_1,W_2}\circ\phi_{W_1,W_2}' = i_{\til{W}_1,\til{W}_2}\circ f_{\im\phi_{W_1,W_2}}^{\im\phi_{\til{W}_1,\til{W}_2}}\circ\phi_{W_1,W_2}'
\end{align*}
together with the injectivity of $i_{\til{W}_1,\til{W}_2}$. Now the universal property of the direct limit $\varinjlim\alpha\tens(\alpha_{X_1}\tens\alpha_{X_2})$ shows that $T_{\alpha,(X_1,X_2)}$ exists, and $T_{(X_1,X_2),\alpha}$ is well defined by a similar argument.

\begin{prop}\label{prop:Fubini}
 The homomorphisms $T_{\alpha,(X_1,X_2)}$ and $T_{(X_1,X_2),\alpha}$ are isomorphisms.
\end{prop}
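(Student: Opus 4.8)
The plan is to prove that $T_{\alpha,(X_1,X_2)}$ is both surjective and injective; the claim for $T_{(X_1,X_2),\alpha}$ then follows by the symmetric argument with left and right tensoring interchanged (using right exactness of $\bullet\tens W(i)$ in place of $W(i)\tens\bullet$). Throughout I would use that the target $\varinjlim\alpha\tens\alpha_{X_1\widehat{\tens}X_2}$ is the union of the images of its structure maps $\phi_{i,W_{1,2}}$ (Proposition \ref{prop:dir_lim_union}), that every $\cC$-submodule $W_{1,2}\subseteq X_1\widehat{\tens}X_2=\varinjlim\alpha_{X_1}\tens\alpha_{X_2}$ satisfies $W_{1,2}\subseteq\im\phi_{W_1,W_2}$ for some $(W_1,W_2)\in I_{X_1}\times I_{X_2}$ (Lemma \ref{lem:im_f_in_im_phi_i}), and that each surjection $\phi_{W_1,W_2}'$ remains surjective after tensoring, since $W(i)\tens\bullet$ is right exact.

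For surjectivity the key point is that the $\cC$-submodules of $X_1\widehat{\tens}X_2$ of the special form $\im\phi_{W_1,W_2}$ are cofinal among all $\cC$-submodules. Given a structure map $\phi_{i,W_{1,2}}$ of the target, I would choose $(W_1,W_2)$ with $W_{1,2}\subseteq\im\phi_{W_1,W_2}$; then $\im\phi_{i,W_{1,2}}\subseteq\im\phi_{i,\,\im\phi_{W_1,W_2}}$ through the connecting morphism, while the defining diagram for $T_{\alpha,(X_1,X_2)}$, together with surjectivity of $\Id_{W(i)}\tens\phi_{W_1,W_2}'$, identifies $\im\phi_{i,\,\im\phi_{W_1,W_2}}$ with $\im\bigl(T_{\alpha,(X_1,X_2)}\circ\phi_{i,W_1,W_2}\bigr)\subseteq\im T_{\alpha,(X_1,X_2)}$. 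Since the target is spanned by the $\im\phi_{i,W_{1,2}}$, this yields surjectivity.

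Injectivity is the main obstacle: one must trace an element killed in the target back to zero in the source. Starting from $b\in\ker T_{\alpha,(X_1,X_2)}$, I would write $b=\phi_{i,W_1,W_2}(w)$ and set $u=(\Id_{W(i)}\tens\phi_{W_1,W_2}')(w)$, so that $u\in\ker\phi_{i,\,\im\phi_{W_1,W_2}}$. By Lemma \ref{lem:ker_phi_i}, $u$ is annihilated by a single connecting morphism $f_i^j\tens f_{\im\phi_{W_1,W_2}}^{W_{1,2}}$; enlarging $W_{1,2}$ to some $\im\phi_{\til{W}_1,\til{W}_2}$ with $W_k\subseteq\til{W}_k$ (again by Lemma \ref{lem:im_f_in_im_phi_i} and closure under sums) shows $u$ is also killed by $f_i^j\tens f_{\im\phi_{W_1,W_2}}^{\im\phi_{\til{W}_1,\til{W}_2}}$. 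The compatibility $\phi_{\til{W}_1,\til{W}_2}'\circ(f_{W_1}^{\til{W}_1}\tens f_{W_2}^{\til{W}_2})=f_{\im\phi_{W_1,W_2}}^{\im\phi_{\til{W}_1,\til{W}_2}}\circ\phi_{W_1,W_2}'$, which follows from injectivity of the inclusions $i_{\til{W}_1,\til{W}_2}$ exactly as in the construction of $T_{\alpha,(X_1,X_2)}$, then transports this to the source: the element $w'=\bigl(f_i^j\tens(f_{W_1}^{\til{W}_1}\tens f_{W_2}^{\til{W}_2})\bigr)(w)$ satisfies $b=\phi_{j,\til{W}_1,\til{W}_2}(w')$ and $(\Id_{W(j)}\tens\phi_{\til{W}_1,\til{W}_2}')(w')=0$.

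It then remains to deduce $b=0$ from $w'\in\ker(\Id_{W(j)}\tens\phi_{\til{W}_1,\til{W}_2}')$, and this is where right exactness together with the finite-generation hypothesis enters decisively. Applying right exactness of $W(j)\tens\bullet$ to the exact sequence ${\ker\phi_{\til{W}_1,\til{W}_2}\to\til{W}_1\tens\til{W}_2\xrightarrow{\phi_{\til{W}_1,\til{W}_2}'}\im\phi_{\til{W}_1,\til{W}_2}\to 0}$ identifies this kernel as the image of $W(j)\tens\ker\phi_{\til{W}_1,\til{W}_2}$. Since $\ker\phi_{\til{W}_1,\til{W}_2}$ is a submodule of an object of $\cC$, hence finitely generated, Lemma \ref{lem:ker_fin_gen} produces $(W_1^*,W_2^*)$ with $\ker\phi_{\til{W}_1,\til{W}_2}\subseteq\ker(f_{\til{W}_1}^{W_1^*}\tens f_{\til{W}_2}^{W_2^*})$; functoriality of $W(j)\tens\bullet$ then forces $\bigl(\Id_{W(j)}\tens(f_{\til{W}_1}^{W_1^*}\tens f_{\til{W}_2}^{W_2^*})\bigr)(w')=0$, whence $b=\phi_{j,\til{W}_1,\til{W}_2}(w')=\phi_{j,W_1^*,W_2^*}(0)=0$. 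This establishes injectivity, so $T_{\alpha,(X_1,X_2)}$ is an isomorphism, and $T_{(X_1,X_2),\alpha}$ is handled identically.
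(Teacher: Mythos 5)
Your proof is correct, and your surjectivity argument is essentially the paper's: cofinality of the submodules $\im\phi_{W_1,W_2}$ among objects of $I_{X_1\widehat{\tens}X_2}$ (Lemma \ref{lem:im_f_in_im_phi_i}) combined with surjectivity of $\Id_{W(i)}\tens\phi'_{W_1,W_2}$ from right exactness. Where you genuinely diverge is injectivity. The paper, after reaching the same relation $\bigl(f_i^j\tens(f_{\im\phi_{W_1,W_2}}^{\im\phi_{\til{W}_1,\til{W}_2}}\circ\phi'_{W_1,W_2})\bigr)(w)=0$, never tensors a kernel: it applies Lemma \ref{lem:ker_fin_gen} \emph{twice}, uses the first isomorphism theorem to produce factorizations $F:\im\phi_{\til{W}_1,\til{W}_2}\rightarrow U_1\tens U_2$ and $G:\im\phi_{\til{U}_1,\til{U}_2}\rightarrow V_1\tens V_2$, and composes these to exhibit a single connecting morphism $f_i^j\tens(f_{W_1}^{V_1}\tens f_{W_2}^{V_2})$ of the source system that kills $w$. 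You instead invoke right exactness of $W(j)\tens\bullet$ (Assumption \ref{assum:Sec5}(4)) on the exact sequence $\ker\phi_{\til{W}_1,\til{W}_2}\rightarrow\til{W}_1\tens\til{W}_2\rightarrow\im\phi_{\til{W}_1,\til{W}_2}\rightarrow 0$ to identify $\ker(\Id_{W(j)}\tens\phi'_{\til{W}_1,\til{W}_2})$ with the image of $W(j)\tens\ker\phi_{\til{W}_1,\til{W}_2}$, after which a \emph{single} application of Lemma \ref{lem:ker_fin_gen} (giving $(f_{\til{W}_1}^{W_1^*}\tens f_{\til{W}_2}^{W_2^*})\circ\iota=0$, hence $(\Id_{W(j)}\tens(f_{\til{W}_1}^{W_1^*}\tens f_{\til{W}_2}^{W_2^*}))(w')=0$) finishes the proof. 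Both routes are valid under the paper's standing assumptions; yours is shorter and is standard homological algebra, while the paper's has the feature that right exactness enters only in the surjectivity half, so its injectivity argument would survive without Assumption \ref{assum:Sec5}(4). One detail you rightly made explicit, and should keep: enlarging $\til{W}_k$ so that $W_k\subseteq\til{W}_k$, which your compatibility square $\phi'_{\til{W}_1,\til{W}_2}\circ(f_{W_1}^{\til{W}_1}\tens f_{W_2}^{\til{W}_2})=f_{\im\phi_{W_1,W_2}}^{\im\phi_{\til{W}_1,\til{W}_2}}\circ\phi'_{W_1,W_2}$ requires, whereas the paper's detour through $\til{U}_k\supseteq W_k, U_k$ avoids needing it.
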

\begin{proof}
 We give the prove for $T_{\alpha,(X_1,X_2)}$, and the proof for $T_{(X_1,X_2),\alpha}$ is essentially the same. To show that $T_{\alpha,(X_1,X_2)}$ is injective, suppose $T_{\alpha,(X_1,X_2)}(b)=0$ for some
 \begin{equation*}
  b=\phi_{i,W_1,W_2}(w)\in\varinjlim\alpha\tens(\alpha_{X_1}\tens\alpha_{X_2}),
 \end{equation*}
where $i\in I$, $W_1\in I_{X_1}$, $W_2\in I_{X_2}$, and $w\in W(i)\tens(W_1\tens W_2)$. Thus
\begin{equation*}
 \phi_{i,\,\im\phi_{W_1,W_2}}\left((\Id_{W(i)}\tens\phi'_{W_1,W_2})(w)\right) =0.
\end{equation*}
Then by Lemma \ref{lem:ker_phi_i}, there exist $j\geq i$ in $I$ and $W_{1,2}\supseteq \im\phi_{W_1,W_2}$ in $I_{X_1\widehat{\tens}X_2}$ such that
\begin{equation*}
 \left(f_i^j\tens (f_{\im\phi_{W_1,W_2}}^{W_{1,2}}\circ\phi_{W_1,W_2}')\right)(w)=0.
\end{equation*}
Since Lemma \ref{lem:im_f_in_im_phi_i} implies that $W_{1,2}\subseteq \im\phi_{\til{W}_1,\til{W}_2}$ for some $\til{W}_1\in I_{X_1}$, $\til{W}_2\in I_{X_2}$, and since
\begin{equation*}
 f_{\im\phi_{W_1,W_2}}^{\im\phi_{\til{W}_1,\til{W}_2}}=f_{W_{1,2}}^{\im\phi_{\til{W}_1,\til{W}_2}}\circ f_{\im\phi_{W_1,W_2}}^{W_{1,2}},
\end{equation*}
we get
\begin{equation}\label{eqn:w_in_kernel}
 \left(f_i^j\tens (f_{\im\phi_{W_1,W_2}}^{\im\phi_{\til{W}_1,\til{W}_2}}\circ\phi_{W_1,W_2}')\right)(w)=0.
\end{equation}
Now, by Lemma \ref{lem:ker_fin_gen}, $\ker\phi_{\til{W}_1,\til{W}_2}=\ker(f_{\til{W}_1}^{U_1}\tens f_{\til{W}_2}^{U_2})$ for some $U_1\in I_{X_1}$, $U_2\in I_{X_2}$. Then because $I_{X_1}$ and $I_{X_2}$ are directed sets, we have $\til{U}_1\in I_{X_1}$ and $\til{U}_2\in I_{X_2}$ such that $W_1,U_1\subseteq \til{U}_1$ and $W_2,U_2\subseteq \til{U}_2$.

We now have the following commutative diagram:
\begin{equation*}
 \xymatrixcolsep{3pc}
 \xymatrix{
 & \im\phi_{\til{W}_1,\til{W}_2} \ar[d]^{i_{\til{W}_1,\til{W}_2}} & \\
 \im\phi_{W_1,W_2} \ar[ru]^{f_{\im\phi_{W_1,W_2}}^{\im\phi_{\til{W}_1,\til{W}_2}}\,\,} \ar[r]^{i_{W_1,W_2}} & X_1\widehat{\tens}X_2 & \til{W}_1\tens \til{W}_2 \ar[l]_{\phi_{\til{W}_1,\til{W}_2}} \ar[lu]_{\phi_{\til{W}_1,\til{W}_2}'} \ar[d]^{f_{\til{W}_1}^{U_1}\tens f_{\til{W}_2}^{U_2}} \\
 W_1\tens W_2 \ar[u]^{\phi_{W_1,W_2}'} \ar[ru]_{\phi_{W_1,W_2}} \ar[rd]_{f_{W_1}^{\til{U}_1}\tens f_{W_2}^{\til{U}_2}} & \im\phi_{\til{U}_1,\til{U}_2} \ar[u]_{i_{\til{U}_1,\til{U}_2}} & U_1\tens U_2 \ar[lu]_{\phi_{U_1,U_2}} \ar[ld]^(.4){ f_{U_1}^{\til{U}_1}\tens f_{U_2}^{\til{U}_2}} \\
 & \til{U}_1\tens \til{U}_2 \ar[u]_{\phi_{\til{U}_1,\til{U}_2}'} & \\
 }
\end{equation*}
At this point, the key observation is that, because $\ker\phi_{\til{W}_1,\til{W}_2}=\ker(f_{\til{W}_1}^{U_1}\tens f_{\til{W}_2}^{U_2})$, we have a homomorphism $F: \im\phi_{\til{W}_1,\til{W}_2}\rightarrow U_1\tens U_2$ such that
\begin{equation*}
 F\circ\phi_{\til{W}_1,\til{W}_2}'=f_{\til{W}_1}^{U_1}\tens f_{\til{W}_2}^{U_2}.
\end{equation*}
We claim that
\begin{equation}\label{eqn:F_reln}
 i_{\til{W}_1,\til{W}_2}=\phi_{U_1,U_2}\circ F.
\end{equation}
Indeed,
\begin{equation*}
 i_{\til{W}_1,\til{W}_2}\circ\phi_{\til{W}_1,\til{W}_2}' = \phi_{\til{W}_1,\til{W}_2} =\phi_{U_1,U_2}\circ(f_{\til{W}_1}^{U_1}\tens f_{\til{W}_2}^{U_2}) =\phi_{U_1,U_2}\circ F\circ\phi_{\til{W}_1,\til{W}_2}',
\end{equation*}
so that \eqref{eqn:F_reln} holds by the surjectivity of $\phi_{\til{W}_1,\til{W}_2}'$.

The commutative diagram and \eqref{eqn:F_reln} together imply that
\begin{equation*}
 \phi_{\til{U}_1,\til{U}_2}\circ(f_{W_1}^{\til{U}_1}\tens f_{W_2}^{\til{U}_2}) = \phi_{\til{U}_1,\til{U}_2}\circ(f_{U_1}^{\til{U}_1}\tens f_{U_2}^{\til{U}_2})\circ F\circ f_{\im\phi_{W_1,W_2}}^{\im\phi_{\til{W}_1,\til{W}_2}}\circ\phi_{W_1,W_2}'.
\end{equation*}
Since $i_{\til{U}_1,\til{U}_2}$ is injective, we can replace $\phi_{\til{U}_1,\til{U}_2}$ with $\phi_{\til{U}_1,\til{U}_2}'$ in this relation. Then Lemma \ref{lem:ker_fin_gen} again implies that $\ker\phi_{\til{U}_1,\til{U}_2} =\ker(f_{\til{U}_1}^{V_1}\tens f_{\til{U}_2}^{V_2})$ for some $V_1\in I_{X_1}$ and $V_2\in I_{X_2}$. This means we have $G:\im\phi_{\til{U}_1,\til{U}_2}\rightarrow V_1\tens V_2$ such that
\begin{equation*}
 G\circ\phi_{\til{U}_1,\til{U}_2}'=f_{\til{U}_1}^{V_1}\tens f_{\til{U}_2}^{V_2},
\end{equation*}
and we conclude that
\begin{equation*}
 f_{W_1}^{V_1}\tens f_{W_2}^{V_2} =(f_{U_1}^{V_1}\tens f_{U_2}^{V_2})\circ F\circ f_{\im\phi_{W_1,W_2}}^{\im\phi_{\til{W}_1,\til{W}_2}}\circ\phi_{W_1,W_2}'.
\end{equation*}
But then using \eqref{eqn:w_in_kernel},
\begin{align*}
 \left(f_{i}^{j}\tens(f_{W_1}^{V_1}\tens f_{W_2}^{V_2})\right)(w) = 0,
\end{align*}
so that
\begin{equation*}
 b=\phi_{i,W_1,W_2}(w)=\left(\phi_{j,V_1,V_2}\circ(f_{i}^{j}\tens(f_{W_1}^{V_1}\tens f_{W_2}^{V_2}))\right)(w) =0.
\end{equation*}
Thus $\ker T_{\alpha,(X_1,X_2)}=0$.

Now to show that $T_{\alpha,(X_1,X_2)}$ is surjective, take $b\in\varinjlim\alpha\tens\alpha_{X_1\widehat{\tens}X_2}$. We may assume $b=\phi_{i, W_{1,2}}(w)$ for some $i\in I$, $W_{1,2}\in I_{X_1\widehat{\tens} X_2}$, and $w\in W(i)\tens W_{1,2}$. By Lemma \ref{lem:im_f_in_im_phi_i}, $W_{1,2}\subseteq\im\phi_{W_1,W_2}$ for some $W_1\in I_{X_1}$, $W_2\in I_{X_2}$. Now since $W(i)\tens\bullet$ is right exact,
\begin{equation*}
 \Id_{W(i)}\tens\phi_{W_1,W_2}': W(i)\tens(W_1\tens W_2)\rightarrow W(i)\tens\im\phi_{W_1,W_2}
\end{equation*}
is surjective, and there exists $\til{w}\in W(i)\tens(W_1\tens W_2)$ such that
\begin{equation*}
 (\Id_{W(i)}\tens\phi_{W_1,W_2}')(\til{w})=(\Id_{W(i)}\tens f_{W_{1,2}}^{\im\phi_{W_1,W_2}})(w).
\end{equation*}
Thus
\begin{align*}
 b=\phi_{i, W_{1,2}}(w) & =\left(\phi_{i,\,\im\phi_{W_1,W_2}}\circ(\Id_{W(i)}\tens f_{W_{1,2}}^{\im\phi_{W_1,W_2}})\right)(w)\nonumber\\
 & =\left(\phi_{i,\im\phi_{W_1,W_2}}\circ(\Id_{W(i)}\tens\phi_{W_1,W_2}')\right)(\til{w}) = T_{\alpha,(X_1,X_2)}\left(\phi_{i,W_1,W_2}(\til{w})\right),
\end{align*}
and we conclude $\im T_{\alpha,(X_1,X_2)}=\varinjlim\alpha\tens\alpha_{X_1\widehat{\tens} X_2}$.
\end{proof}

Now for weak modules $X_1$, $X_2$, and $X_3$, we can define $T_{X_1,(X_2,X_3)}=T_{\alpha_{X_1},(X_2,X_3)}$ and $T_{(X_1,X_2),X_3}=T_{(X_1,X_2),\alpha_{X_3}}$. To show that $T_{\bullet,(\bullet,\bullet)}$ is a natural transformation, consider the following commutative diagrams for morphisms $F_k: X_k\rightarrow\til{X}_k$, $k=1,2,3$, in $\cW$:
\begin{equation*}
\xymatrixcolsep{7pc}
\xymatrix{
 W_1\tens(W_2\tens W_3) \ar[d]_{\Id_{W_1}\tens\phi_{W_2,W_3}'} \ar[r]^(.45){\phi_{W_1,W_2,W_3}} & \varinjlim\alpha_{X_1}\tens(\alpha_{X_2}\tens\alpha_{X_3}) \ar[d]^{T_{X_1,(X_2,X_3)}} \\
 W_1\tens\im\phi_{W_2,W_3} \ar[d]_{F_1\vert_{W_1}\tens(F_2\widehat{\tens}F_3)\vert_{\im\phi_{W_2,W_3}}} \ar[r]^{\phi_{W_1,\im\phi_{W_2,W_3}}} & \varinjlim\alpha_{X_1}\tens\alpha_{X_2\widehat{\tens}X_3} \ar[d]^{F_1\widehat{\tens}(F_2\widehat{\tens}F_3)} \\
 F_1(W_1)\tens(F_2\widehat{\tens}F_3)(\im\phi_{W_2,W_3}) \ar[r]_(.58){\phi_{F_1(W_1),(F_2\widehat{\tens}F_3)(\im\phi_{W_2,W_3})}} & \varinjlim\alpha_{\til{X}_1}\tens\alpha_{\til{X}_2\widehat{\tens}\til{X}_3} \\
 }
\end{equation*}
and
\begin{equation*}
 \xymatrixcolsep{7pc}
 \xymatrix{
W_1\tens(W_2\tens W_3) \ar[r]^{\phi_{W_1,W_2,W_3}} \ar[d]_{F_1\vert_{W_1}\tens (F_2\vert_{W_2}\tens F_3\vert_{W_3})} & \varinjlim\alpha_{X_1}\tens(\alpha_{X_2}\tens\alpha_{X_3}) \ar[d]^{\widehat{F_1\tens(F_2\tens F_3)}} \\
 F_1(W_1)\tens(F_2(W_2)\tens F_3(W_3)) \ar[d]_{\Id_{F_1(W_1)}\tens\phi_{F_2(W_2), F_3(W_3)}'} \ar[r]^(.54){\phi_{F_1(W_1),F_2(W_2),F_3(W_3)}} & \varinjlim\alpha_{\til{X}_1}\tens(\alpha_{\til{X}_2}\tens\alpha_{\til{X}_3}) \ar[d]^{T_{\til{X}_1,(\til{X}_2,\til{X}_3)}} \\
F_1(W_1)\tens\im\phi_{F_2(W_2),F_3(W_3)} \ar[r]_(.56){\phi_{F_1(W_1),\im\phi_{F_2(W_2),F_3(W_3)}}} & \varinjlim\alpha_{\til{X}_1}\tens\alpha_{\til{X}_2\widehat{\tens}\til{X}_3} \\
 }
\end{equation*}
where $(W_1,W_2,W_3)\in I_{X_1}\times I_{X_2}\times I_{X_3}$. Since by definition
\begin{align*}
 (F_2\widehat{\tens}F_3)\vert_{\im\phi_{W_2,W_3}}\circ\phi_{W_2,W_3}'=(F_2\widehat{\tens}F_3)\circ\phi_{W_2,W_3} =\phi_{F_2(W_2),F_3(W_3)}\circ(F_2\vert_{W_2}\tens F_3\vert_{W_3}),
\end{align*}
the commutative diagrams show that
\begin{equation*}
 T_{\til{X}_1,(\til{X}_2,\til{X}_3)}\circ\widehat{F_1\tens(F_2\tens F_3)}\circ\phi_{W_1,W_2,W_3} =(F_1\widehat{\tens}(F_2\widehat{\tens}F_3))\circ T_{X_1,(X_2,X_3)}\circ\phi_{W_1,W_2,W_3}
\end{equation*}
for all $\cC$-submodules $W_k\subseteq X_k$, $k=1,2,3$, and thus
\begin{equation*}
 T_{\til{X}_1,(\til{X}_2,\til{X}_3)}\circ\widehat{F_1\tens(F_2\tens F_3)} =(F_1\widehat{\tens}(F_2\widehat{\tens}F_3))\circ T_{X_1,(X_2,X_3)}
\end{equation*}
as required. The proof that $T_{(\bullet,\bullet),\bullet}$ is a natural transformation is essentially the same.

We can now define a natural associativity isomorphism for the tensor product $\widehat{\tens}$, which we denote by $\mathfrak{A}$, using the natural isomorphisms $T_{\bullet,(\bullet,\bullet)}$ and $T_{(\bullet,\bullet),\bullet}$ combined with the natural isomorphism $\widehat{\cA}$ induced from the associativity isomorphisms in $\cC$:
\begin{equation*}
 \fA_{X_1,X_2,X_3}=T_{(X_1,X_2),X_3}\circ\widehat{\cA}_{X_1,X_2,X_3}\circ T_{X_1,(X_2,X_3)}^{-1}
\end{equation*}
for any weak modules $X_1$, $X_2$, and $X_3$. Note that $\fA_{X_1,X_2,X_3}$ is defined and is an isomorphism even if $X_1$, $X_2$, and $X_3$ are not objects of $\ind(\cC)$, although in this case both triple tensor products could well be $0$.

Finally, we consider the braiding and twist. The natural braiding isomorphism $\cR: \tens\rightarrow\tens\circ\sigma$, where $\sigma$ is the permutation functor, induces a natural braiding isomorphism $\widehat{\cR}: \widehat{\tens}\rightarrow\widehat{\tens}\circ\sigma$ such that
\begin{equation*}
 \xymatrixcolsep{4pc}
 \xymatrix{
 W_1\tens W_2 \ar[d]_{\phi_{W_1,W_2}} \ar[r]^{\cR_{W_1,W_2}} & W_2\tens W_1 \ar[d]^{\phi_{W_2,W_1}} \\
 \varinjlim\alpha_{X_1}\tens\alpha_{X_2} \ar[r]_{\widehat{\cR}_{X_1,X_2}} & \varinjlim\alpha_{X_2}\tens\alpha_{X_1} \\
 }
\end{equation*}
commutes for objects $(W_1,W_2)\in I_{X_1}\times I_{X_2}$. Note that $\widehat{\cR}_{X_1,X_2}$ is an isomorphism even if $X_1$ and $X_2$ are not objects of $\ind(\cC)$. Similarly, the twist $\theta$ induces a natural isomorphism $\widehat{\theta}:\widehat{\Id}_\cC\rightarrow\widehat{\Id}_\cC$ such that
\begin{equation*}
 \xymatrixcolsep{3pc}
 \xymatrix{
 W \ar[d]_{\phi_W} \ar[r]^{\theta_W} & W \ar[d]^{\phi_W} \\
 \varinjlim\alpha_X \ar[r]_{\widehat{\theta}_X} & \varinjlim\alpha_X\\
 }
\end{equation*}
for $W\in I_X$. Restricting to $\ind(\cC)$, we get a natural isomorphism $\Theta:\Id_{\ind(\cC)}\rightarrow\Id_{\ind(\cC)}$ defined by $\Theta_X=Q_X\circ\widehat{\theta}_X\circ Q_X^{-1}$ for generalized modules $X$ in $\ind(\cC)$. For any generalized module $X$ in $\ind(\cC)$ and $W\in I_X$, we claim that $\Theta_X\vert_W=\theta_W$, that is, the diagram
\begin{equation}\label{diag:Theta}
 \xymatrix{
 W \ar[r]^{\theta_W} \ar[d]_{i_W} & W \ar[d]^{i_W} \\
 X\ar[r]_{\Theta_X} & X\\
 }
\end{equation}
commutes. To prove this, we calculate
\begin{align*}
\Theta_X\circ i_W  = Q_X\circ\widehat{\theta}_X\circ Q_{X}^{-1}\circ i_W & = Q_X\circ\widehat{\theta}_X\circ Q_X^{-1}\circ Q_X\circ\phi_W\nonumber\\ &=Q_X\circ\widehat{\theta}_X\circ\phi_W =Q_X\circ\phi_W\circ\theta_W =i_W\circ\theta_W,
\end{align*}
using the definitions.

We can now prove that $\ind(\cC)$ is a braided tensor category with twist:
\begin{thm}
 The structure $(\ind(\cC), \widehat{\tens},V,\mathfrak{l},\mathfrak{r},\fA,\widehat{\cR},\Theta)$ is a $\CC$-linear braided tenor category with twist.
\end{thm}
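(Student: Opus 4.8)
The plan is to verify the remaining axioms of a braided tensor category with twist, since the preceding work already establishes that $\ind(\cC)$ is a $\CC$-linear abelian category (Proposition \ref{prop:IndC_abelian}), that $\widehat{\tens}$ is a bifunctor, and that $\mathfrak{l}$, $\mathfrak{r}$, $\fA$, $\widehat{\cR}$, and $\Theta$ are natural transformations which restrict to isomorphisms on $\ind(\cC)$. What remains is the triangle axiom, the pentagon axiom, the two hexagon axioms, and the twist conditions $\Theta_V=\Id_V$ and the balancing equation. The uniform strategy is this: each axiom asserts the equality of two morphisms out of some iterated tensor product, and every such domain is a direct limit spanned by the images of the canonical maps $\phi_{W_1,\dots,W_n}$ indexed by $\cC$-submodules $W_k\subseteq X_k$. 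Hence it suffices to check that the two morphisms agree after precomposition with each $\phi_{W_1,\dots,W_n}$; the defining commutative squares for $\widehat{\tens}$, $\mathfrak{l}$, $\mathfrak{r}$, $\widehat{\cR}$, $\widehat{\cA}$, $\widehat{\theta}$, and the Fubini isomorphisms $T$ then reduce each such identity to the corresponding axiom among the objects $W_1,\dots,W_n$ of $\cC$, where it holds by hypothesis.

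For the triangle and hexagon axioms this reduction is direct. To prove the triangle identity one precomposes with $\phi_{W_1,U,W_2}$ for $U\in I_V$, $W_1\in I_{X_1}$, $W_2\in I_{X_2}$; unravelling the definitions of $\fA$ (as $T_{(X_1,X_2),X_3}\circ\widehat{\cA}\circ T_{X_1,(X_2,X_3)}^{-1}$), of $\mathfrak{r}$, and of $\mathfrak{l}$ on the image of this map turns the identity into the triangle axiom for $W_1$, $V$, $W_2$ in $\cC$. The two hexagon axioms are handled in exactly the same way, precomposing with $\phi_{W_1,W_2,W_3}$ and invoking the compatibility of $\widehat{\cR}$ and $\widehat{\cA}$ with the $\phi$'s together with the hexagon axioms in $\cC$.

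The main obstacle is the pentagon axiom, precisely because $\fA$ is not simply $\widehat{\cA}$ but the composite involving the Fubini isomorphisms $T_{\bullet,(\bullet,\bullet)}$ and $T_{(\bullet,\bullet),\bullet}$, and the five edges of the pentagon group the four objects $X_1,X_2,X_3,X_4$ in five different ways. This is exactly why the generalized homomorphisms $T_{\alpha,(X_1,X_2)}$ and $T_{(X_1,X_2),\alpha}$ for an arbitrary direct system $\alpha$ were constructed in Proposition \ref{prop:Fubini}: the various four-fold iterated tensor products appearing in the pentagon are realized as multiple direct limits of the form $\varinjlim\,(\alpha_{X_1}\tens\alpha_{X_2})\tens(\cdots)$, and the naturality of $T$ in the direct-system argument $\alpha$ --- applied with $\alpha$ itself of the form $\alpha_{X_i}\tens\alpha_{X_j}$ or $\alpha_{X_i\widehat{\tens}X_j}$ --- is what lets one slide the $T$'s past one another. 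I would decompose the pentagon into a diagram built from instances of these generalized $T$'s, the induced isomorphism $\widehat{\cA}$, and the naturality squares of $\fA$ and $T$, and then reduce the innermost commuting cell to the pentagon axiom for $W_1,W_2,W_3,W_4$ in $\cC$ after precomposing with $\phi_{W_1,W_2,W_3,W_4}$. The bookkeeping of which multiple direct limit each vertex represents, and of the compatibility of the different $T$'s, is the genuinely delicate part.

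Finally, for the twist: the relation $\Theta_V=\Id_V$ follows from $\theta_V=\Id_V$ together with diagram \eqref{diag:Theta} applied to the submodule $W=V$ of $V$, while naturality of $\Theta$ is inherited from that of $\widehat{\theta}$ via the natural isomorphism $Q$. For the balancing equation one precomposes with $\phi_{W_1,W_2}$ and uses $\Theta_X\vert_W=\theta_W$ from \eqref{diag:Theta}, the defining squares for $\widehat{\cR}$, and the balancing equation in $\cC$ (Assumption \ref{assum:Sec5}(3)) to reduce to the identity $\theta_{W_1\tens W_2}=\cR_{W_2,W_1}\circ\cR_{W_1,W_2}\circ(\theta_{W_1}\tens\theta_{W_2})$ on $W_1\tens W_2$.
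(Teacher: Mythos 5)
Your proposal follows essentially the same route as the paper: check each axiom after precomposing with the spanning maps $\phi_{W_1,\ldots,W_n}$, reduce to the corresponding axiom in $\cC$ via the defining squares for $\widehat{\tens}$, $\mathfrak{l}$, $\mathfrak{r}$, $\widehat{\cR}$, $\widehat{\cA}$ and the Fubini maps, and handle the pentagon by realizing the four-fold iterated products as multiple direct limits glued by the generalized homomorphisms $T_{\alpha,(X_1,X_2)}$ and $T_{(X_1,X_2),\alpha}$ --- which is exactly why the paper proved Proposition \ref{prop:Fubini} for an arbitrary direct system $\alpha$, and exactly how its two auxiliary diagrams \eqref{diag:pentagon_two} and \eqref{diag:pentagon_three} are organized. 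Your treatment of the twist (diagram \eqref{diag:Theta} plus the balancing equation in $\cC$) also matches the paper.

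One point you gloss over: your opening claim that the preceding sections already establish everything except the coherence and twist axioms is not quite right. Section \ref{sec:ind_comp} only shows that $\widehat{\tens}$ is a \emph{functor}; for a $\CC$-linear tensor category one must also verify that $\widehat{\tens}$ is bilinear on morphism spaces, i.e.\ $(hF_1+G)\widehat{\tens}F_2=h(F_1\widehat{\tens}F_2)+G\widehat{\tens}F_2$, and the paper proves this as the first step of its proof. The subtlety is that $F_1\widehat{\tens}F_2$ is defined via the submodules $F_1(W_1)$, $G(W_1)$, $(hF_1+G)(W_1)$, which are different submodules of $\til{X}_1$, so one must pass to a common $\cC$-submodule containing all of them (e.g.\ $F_1(W_1)+G(W_1)$) before bilinearity of $\tens$ in $\cC$ can be applied. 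This is routine with your own technique, but it is a required check and should be added.
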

\begin{proof}
We have already observed in Proposition \ref{prop:IndC_abelian} that $\ind(\cC)$ is a $\CC$-linear abelian category, but we still need to show that the tensor product of morphisms is bilinear. It is sufficient to show that for homomorphisms $F_1,G: X_1\rightarrow\til{X}_1$, $F_2: X_2\rightarrow\til{X}_2$ in $\cW$ and $h\in\CC$,
\begin{equation}\label{eqn:tens_bilinear}
(hF_1+G)\widehat{\tens}F_2=h(F_1\widehat{\tens}F_2)+G\widehat{\tens} F_2.
\end{equation}
The proof for $F_1\widehat{\tens}(hF_2+G)$ will be essentially the same, or alternatively, use the natural braiding isomorphism $\widehat{\cR}$ (plus bilinearity of composition).

To prove \eqref{eqn:tens_bilinear}, it is enough to show that
\begin{equation*}
 ((hF_1+G)\widehat{\tens}F_2)\circ\phi_{W_1,W_2}=h(F_1\widehat{\tens}F_2)\circ\phi_{W_1,W_2}+(G\widehat{\tens} F_2)\circ\phi_{W_1,W_2}
\end{equation*}
for all $W_1\in I_{W_1}$, $W_2\in I_{W_2}$. Take $\til{W}_1\in I_{\til{X}_1}$ such that $F_1(W_1),G(W_1),(hF_1+G)(W_1)\subseteq\til{W}_1$ (for example, take $\til{W}_1=F_1(W_1)+G(W_1)$). Then we find
\begin{align*}
 ((hF_1+G)\widehat{\tens}F_2)\circ & \phi_{W_1,W_2}  =\phi_{(hF_1+G)(W_1),F_2(W_2)}\circ((hF_1+G)\vert_{W_1}\tens F_2\vert_{W_2})\nonumber\\
 & =\phi_{\til{W}_1,F_2(W_2)}\circ(f_{(hF_1+G)(W_1)}^{\til{W}_1}\tens\Id_{F_2(W_2)})\circ((hF_1+G)\vert_{W_1}\tens F_2\vert_{W_2})\nonumber\\
 & =h\phi_{\til{W}_1,F_2(W_2)}\circ(f_{F_1(W_1)}^{\til{W}_1}\tens\Id_{F_2(W_2)})\circ(F_1\vert_{W_1}\tens F_2\vert_{W_2})\nonumber\\
 & \hspace{4em} + \phi_{\til{W}_1,F_2(W_2)}\circ(f_{G(W_1)}^{\til{W}_1}\tens\Id_{F_2(W_2)})\circ(G\vert_{W_1}\tens F_2\vert_{W_2})\nonumber\\
 & =h\phi_{F_1(W_1),F_2(W_2)}\circ(F_1\vert_{W_1}\tens F_2\vert_{W_2}) +\phi_{G(W_1),F_2(W_2)}\circ(G\vert_{W_1}\tens F_2\vert_{W_2})\nonumber\\
 & =h(F_1\widehat{\tens}F_2)\circ\phi_{W_1,W_2}+(G\widehat{\tens}F_2)\circ\phi_{W_1,W_2},
\end{align*}
where we have used the bilinearity of $\tens$ for the third equality.

Now we prove the triangle axiom. We need to show that for modules $X_1$, $X_2$ in $\ind(\cC)$, the diagram
 \begin{equation*}
  \xymatrixcolsep{3pc}
  \xymatrix{
  X_1\widehat{\tens}(V\widehat{\tens} X_2) \ar[r]^(.49){\fA_{X_1,V,X_2}} \ar[rd]_{\Id_{X_1}\widehat{\tens} \mathfrak{l}_{X_2}} & (X_1\widehat{\tens} V)\widehat{\tens}X_2 \ar[d]^{\mathfrak{r}_{X_1}\widehat{\tens}\Id_{X_2}} \\
  & X_1\widehat{\tens}X_2 \\
  }
 \end{equation*}
commutes, or equivalently,
\begin{equation}\label{eqn:triangle}
 (\Id_{X_1}\widehat{\tens} \mathfrak{l}_{X_2})\circ T_{X_1,(V,X_2)}=(\mathfrak{r}_{X_1}\widehat{\tens}\Id_{X_2})\circ T_{(X_1,V),X_2}\circ\widehat{\cA}_{X_1,V,X_2}.
\end{equation}
Using the definitions, the left side here is characterized by the commutative diagram
\begin{equation*}
 \xymatrixcolsep{5pc}
 \xymatrix{
 W_1\tens(U\tens W_2) \ar[d]_{\phi_{W_1,U,W_2}} \ar[r]^{\Id_{W_1}\tens\phi_{U,W_2}'}
 & W_1\tens\im\phi_{U,W_2} \ar[d]^{\phi_{W_1,\im\phi_{U,W_2}}}  \ar[r]^(.45){\Id_{W_1}\tens \mathfrak{l}_{X_2}\vert_{\im\phi_{U,W_2}}} & W_1\tens \mathfrak{l}_{X_2}(\im\phi_{U,W_2}) \ar[d]^{\phi_{W_1,\mathfrak{l}_{X_2}(\im\phi_{U,W_2})}} \\
 \varinjlim\alpha_{X_1}\tens(\alpha_V\tens\alpha_{X_2}) \ar[r]_(.54){T_{X_1,(V,X_2)}} & \varinjlim\alpha_{X_1}\tens\alpha_{V\widehat{\tens}\alpha_{X_2}} \ar[r]_(.54){\Id_{X_1}\widehat{\tens} \mathfrak{l}_{X_2}} & \varinjlim\alpha_{X_1}\tens\alpha_{X_2} \\
 }
\end{equation*}
for $W_1\in I_{X_1}$, $U\in I_V$, and $W_2\in I_{X_2}$. The definition of $\mathfrak{l}_{X_2}$ implies
\begin{equation*}
 \mathfrak{l}_{X_2}\vert_{\im\phi_{U,W_2}}\circ\phi_{U,W_2}' =\mathfrak{l}_{X_2}\circ\phi_{U,W_2}=i_{W_2}\circ l_{W_2}\circ(i_U\tens\Id_{W_2}).
\end{equation*}
In particular, $\mathfrak{l}_{X_2}(\im\phi_{U,W_2})\subseteq W_2$, so
\begin{align}\label{eqn:triangle_left}
 \phi_{W_1,\mathfrak{l}_{X_2}(\im\phi_{U,W_2})}\circ & (\Id_{W_1}\tens(\mathfrak{l}_{X_2}\vert_{\im\phi_{U,W_2}}\circ\phi_{U,W_2}'))\nonumber\\
 & =\phi_{W_1,W_2}\circ(\Id_{W_1}\tens f_{\mathfrak{l}_{X_2}(\im\phi_{U,W_2})}^{W_2})\circ(\Id_{W_1}\tens(\mathfrak{l}_{X_2}\vert_{\im\phi_{U,W_2}}\circ\phi_{U,W_2}'))\nonumber\\
 & =\phi_{W_1,W_2}\circ(\Id_{W_1}\tens l_{W_2})\circ(\Id_{W_1}\tens(i_U\tens\Id_{W_2})).
\end{align}
On the other hand, the right side of \eqref{eqn:triangle} is characterized by the diagram
\begin{equation*}
 \xymatrixcolsep{5pc}
 \xymatrix{
W_1\tens(U\tens W_2) \ar[d]_{\cA_{W_1,U,W_2}} \ar[r]^(.45){\phi_{W_1,U,W_2}} & \varinjlim\alpha_{X_1}\tens(\alpha_V\tens\alpha_{X_2}) \ar[d]^{\widehat{\cA}_{X_1,V,X_2}} \nonumber\\
(W_1\tens U)\tens W_2 \ar[d]_{\phi_{W_1,U}'\tens\Id_{W_2}} \ar[r]^(.45){\phi_{W_1,U,W_2}} & \varinjlim(\alpha_{X_1}\tens\alpha_V)\tens\alpha_{X_2} \ar[d]^{T_{(X_1,V),X_2}} \\
\im\phi_{W_1,U}\tens W_2 \ar[d]_{\mathfrak{r}_{X_1}\vert_{\im\phi_{W_1,U}}\tens\Id_{W_2}} \ar[r]^{\phi_{\im\phi_{W_1,U},W_2}} & \varinjlim\alpha_{X_1\widehat{\tens}V}\tens\alpha_{X_2} \ar[d]^{\mathfrak{r}_{X_1}\widehat{\tens}\Id_{X_2}} \\
\mathfrak{r}_{X_1}(\im\phi_{W_1,U})\tens W_2 \ar[r]_(.54){\phi_{\mathfrak{r}_{X_1}(\im\phi_{W_1,U}),W_2}} & \varinjlim\alpha_{X_1}\tens\alpha_{X_2} \\
 }
\end{equation*}
As with the left unit isomorphisms, we calculate
\begin{align}\label{eqn:right_triangle}
& \hspace{-3em}\phi_{\mathfrak{r}_{X_1}(\im\phi_{W_1,U}),W_2}\circ  ((\mathfrak{r}_{X_1}\vert_{\im\phi_{W_1,U}}\circ\phi_{W_1,U}')\tens\Id_{W_2})\circ\cA_{W_1,U,W_2}\nonumber\\
 & =\phi_{W_1,W_2}\circ(f_{\mathfrak{r}_{X_1}(\im\phi_{W_1,U})}^{W_1}\tens\Id_{W_2})\circ((\mathfrak{r}_{X_1}\vert_{\im\phi_{W_1,U}}\circ\phi_{W_1,U}')\tens\Id_{W_2})\circ\cA_{W_1,U,W_2}\nonumber\\
 & =\phi_{W_1,W_2}\circ(r_{W_1}\tens\Id_{W_2})\circ((\Id_{W_1}\tens i_U)\tens\Id_{W_2})\circ\cA_{W_1,U,W_2}\nonumber\\
 & =\phi_{W_1,W_2}\circ(r_{W_1}\tens\Id_{W_2})\circ\cA_{W_1,V,W_2}\circ(\Id_{W_1}\tens(i_U\tens\Id_{W_2})).
\end{align}
Now the triangle axiom for $\ind(\cC)$ follows from \eqref{eqn:triangle_left}, \eqref{eqn:right_triangle}, and the triangle axiom in $\cC$.

To prove the pentagon axiom, we first observe that the various quadruple tensoring functors $\cC^4\rightarrow\cC$ induce functors $\cW^4\rightarrow\ind(\cC)$, as in Section \ref{sec:ind_comp}, which are all pairwise naturally isomorphic via suitable associativity isomorphisms. For example, we use
\begin{equation*}
 (X_1,X_2,X_3,X_4)\mapsto\varinjlim\alpha_{X_1}\tens(\alpha_{X_2}\tens(\alpha_{X_3}\tens\alpha_{X_4}))
\end{equation*}
to denote the functor induced by
\begin{equation*}
 (W_1,W_2,W_3,W_4)\mapsto W_1\tens(W_2\tens(W_3\tens W_4)).
\end{equation*}
Now we claim that the following diagram commutes for modules $X_1$, $X_2$, $X_3$, $X_4$ in $\ind(\cC)$:
\begin{equation}\label{diag:pentagon_two}
\xymatrixcolsep{2.02pc}
 \xymatrix{
 \varinjlim\alpha_{X_1}\tens(\alpha_{X_2}\tens(\alpha_{X_3}\tens\alpha_{X_4})) \ar[d]_{\cong} \ar[r]^(.53){T_{1(2(34))}} & \varinjlim\alpha_{X_1}\tens(\alpha_{X_2}\tens\alpha_{X_3\widehat{\tens}X_4}) \ar[d]_{\widehat{\cA}_{X_1,X_2,X_3\widehat{\tens}X_4}} \ar[r]^(.47){T_{X_1,(X_2,X_3\widehat{\tens}X_4)}} & X_1\widehat{\tens}(X_2\widehat{\tens}(X_3\widehat{\tens}X_4)) \ar[d]^{\fA_{X_1,X_2,X_3\widehat{\tens}X_4}} \\
 \varinjlim(\alpha_{X_1}\tens\alpha_{X_2})\tens(\alpha_{X_3}\tens\alpha_{X_4}) \ar[r]^{T_{\alpha_{X_1}\tens\alpha_{X_2},(X_3,X_4)}} \ar[d]_{\cong} \ar[rd]_(.45){T_{(X_1,X_2),\alpha_{X_3}\tens\alpha_{X_4}}} & \varinjlim(\alpha_{X_1}\tens\alpha_{X_2})\tens\alpha_{X_3\widehat{\tens}X_4} \ar[r]^(.47){T_{(X_1,X_2),X_3\widehat{\tens}X_4}} & (X_1\widehat{\tens}X_2)\widehat{\tens}(X_3\widehat{\tens}X_4) \ar[d]^{\fA_{X_1\widehat{\tens}X_2,X_3,X_4}} \\
 \varinjlim((\alpha_{X_1}\tens\alpha_{X_2})\tens\alpha_{X_3})\tens\alpha_{X_4} \ar[rd]_{T_{((12)3)4}}  & \varinjlim\alpha_{X_1\widehat{\tens}X_2}\tens(\alpha_{X_3}\tens\alpha_{X_4}) \ar[d]^(.45){\widehat{\cA}_{X_1\widehat{\tens}X_2,X_3,X_4}} \ar[ru]_(.55){\qquad T_{X_1\widehat{\tens}X_2,(X_3,X_4)}} & ((X_1\widehat{\tens}X_2)\widehat{\tens}X_3)\widehat{\tens}X_4 \\
 & \varinjlim(\alpha_{X_1\widehat{\tens}X_2}\tens\alpha_{X_3})\tens\alpha_{X_4} \ar[ru]_(.55){\qquad T_{(X_1\widehat{\tens}X_2,X_3),X_4}} & \\
 }
\end{equation}
Here we can define the homomorphisms $T_{1(2(34))}$ and $T_{((12)3)4}$ to be such that the corresponding squares commute; they are isomorphisms because $T_{\alpha_{X_1}\tens\alpha_{X_2},(X_3,X_4)}$ and $T_{(X_1,X_2),\alpha_{X_3}\tens\alpha_{X_4}}$ are isomorphisms by Proposition \ref{prop:Fubini}. Thus to prove that \eqref{diag:pentagon_two} commutes, we just need to check the square in the middle. Indeed, for $W_k\in I_{X_k}$, $k=1,2,3,4$, we have
\begin{align*}
T_{X_1\widehat{\tens}X_2,(X_3,X_4)}\circ & T_{(X_1,X_2),\alpha_{X_3}\tens\alpha_{X_4}}\circ\phi_{W_1,W_2,W_3,W_4}\nonumber\\
& = T_{X_1\widehat{\tens}X_2,(X_3,X_4)}\circ\phi_{\im\phi_{W_1,W_2},W_3,W_4}\circ(\phi_{W_1,W_2}'\tens\Id_{W_3\tens W_4})\nonumber\\
& =\phi_{\im\phi_{W_1,W_2},\im\phi_{W_3,W_4}}\circ(\Id_{\im_{\phi_{W_1,W_2}}}\tens\phi_{W_3,W_4}')\circ(\phi_{W_1,W_2}'\tens\Id_{W_3\tens W_4})\nonumber\\
& =\phi_{\im\phi_{W_1,W_2},\im\phi_{W_3,W_4}}\circ(\phi_{W_1,W_2}'\tens\Id_{\im\phi_{W_3,W_4}})\circ(\Id_{W_1\tens W_2}\tens\phi_{W_3,W_4}')\nonumber\\
& =T_{(X_1,X_2),X_3\widehat{\tens}X_4}\circ\phi_{W_1,W_2,\im\phi_{W_3,W_4}}\circ(\Id_{W_1\tens W_2}\tens\phi_{W_3,W_4}')\nonumber\\
& =T_{(X_1,X_2),X_3\widehat{\tens}X_4}\circ T_{\alpha_{X_1}\tens\alpha_{X_2},(X_3,X_4)}\circ\phi_{W_1,W_2,W_3,W_4}.
\end{align*}
We can also alternatively characterize $T_{1(2(34))}$ as follows:
\begin{align*}
 T_{1(2(34))}\circ & \phi_{W_1,W_2,W_3,W_4}  =\widehat{\cA}^{-1}_{X_1,X_2,X_3\widehat{\tens}X_4}\circ T_{\alpha_{X_1}\tens\alpha_{X_2},(X_3,X_4)}\circ\phi_{W_1,W_2,W_3,W_4}\circ\cA_{W_1,W_2,W_3\tens W_4}\nonumber\\
 & = \widehat{\cA}^{-1}_{X_1,X_2,X_3\widehat{\tens}X_4}\circ\phi_{W_1,W_2,\im\phi_{W_3,W_4}}\circ(\Id_{W_1,\tens W_2}\tens\phi_{W_3,W_4}')\circ\cA_{W_1,W_2,W_3\tens W_4}\nonumber\\
 & =\phi_{W_1,W_2,\im\phi_{W_3,W_4}}\circ\cA^{-1}_{W_1,W_2,\im\phi_{W_3,W_4}}\circ(\Id_{W_1,\tens W_2}\tens\phi_{W_3,W_4}')\circ\cA_{W_1,W_2,W_3\tens W_4}\nonumber\\
 & =\phi_{W_1,W_2,\im\phi_{W_3,W_4}}\circ(\Id_{W_1}\tens(\Id_{W_2}\tens\phi_{W_3,W_4}'))
\end{align*}
for all $W_k\in I_{X_k}$, $k=1,2,3,4$. Similarly, $T_{((12)3)4}$ is characterized by
\begin{equation*}
 T_{((12)3)4}\circ  \phi_{W_1,W_2,W_3,W_4} =\phi_{\im\phi_{W_1,W_2},W_3,W_4}\circ((\phi_{W_1,W_2}'\tens\Id_{W_3})\tens\Id_{W_4})
\end{equation*}
for all $W_k\in I_{X_k}$.

We also claim the following diagram commutes:
\begin{equation}\label{diag:pentagon_three}
\xymatrixcolsep{1.64pc}
 \xymatrix{
 \varinjlim\alpha_{X_1}\tens(\alpha_{X_2}\tens(\alpha_{X_3}\tens\alpha_{X_4})) \ar[r]_(.55){\stackrel{}{T_{1(2(34))}}} \ar[d]_{\cong} & \varinjlim\alpha_{X_1}\tens(\alpha_{X_2}\tens\alpha_{X_3\widehat{\tens}X_4}) \ar[r]_(.68){\stackrel{}{T_{X_1,(X_2,X_3\widehat{\tens}X_4)}}} & X_1\widehat{\tens}(X_2\widehat{\tens}(X_3\widehat{\tens}X_4)) \ar[d]^{\Id_{X_1}\widehat{\tens}\fA_{X_2,X_3,X_4}} \\
 \varinjlim\alpha_{X_1}\tens((\alpha_{X_2}\tens\alpha_{X_3})\tens\alpha_{X_4}) \ar[r]_(.55){\stackrel{}{T_{1((23)4)}}} \ar[d]_{\cong} & \varinjlim\alpha_{X_1}\tens(\alpha_{X_2\widehat{\tens}X_3}\tens\alpha_{X_4}) \ar[d]^{\widehat{\cA}_{X_1,X_2\widehat{\tens}X_3,X_4}} \ar[r]_(.68){\stackrel{}{T_{X_1,(X_2\widehat{\tens}X_3,X_4)}}} & X_1\widehat{\tens}((X_2\widehat{\tens}X_3)\widehat{\tens} X_4) \ar[d]^{\fA_{X_1,X_2\widehat{\tens}X_3,X_4}} \\
 \varinjlim(\alpha_{X_1}\tens(\alpha_{X_2}\tens\alpha_{X_3}))\tens\alpha_{X_4} \ar[d]_{\cong} \ar[r]_(.55){\stackrel{}{T_{(1(23))4}}} & \varinjlim(\alpha_{X_1}\tens\alpha_{X_2\widehat{\tens}X_3})\tens\alpha_{X_4} \ar[r]_(.68){\stackrel{}{T_{(X_1,X_2\widehat{\tens}X_3),X_4}}} & (X_1\widehat{\tens}(X_2\widehat{\tens}X_3))\widehat{\tens}X_4 \ar[d]^{\fA_{X_1,X_2,X_3}\widehat{\tens}\Id_{X_4}} \\
 \varinjlim((\alpha_{X_1}\tens\alpha_{X_2})\tens\alpha_{X_3})\tens\alpha_{X_4} \ar[r]_(.55){\stackrel{}{T_{((12)3)4}}} & \varinjlim(\alpha_{X_1\widehat{\tens}X_2}\tens\alpha_{X_3})\tens\alpha_{X_4} \ar[r]_(.68){\stackrel{}{T_{(X_1\widehat{\tens}X_2,X_3),X_4}}} & ((X_1\widehat{\tens}X_2)\widehat{\tens}X_3)\widehat{\tens}X_4 \\
 }
\end{equation}
Again, we define the isomorphisms $T_{1((23)4)}$ and $T_{(1(23))4}$ such that the corresponding hexagons in the diagram commute. It remains to prove that the square in the center left of the diagram commutes.

We need an alternate description of $T_{1((23)4)}$, so for $W_k\in I_{X_k}$, $k=1,2,3,4$, we calculate:
\begin{align*}
& T_{X_1,(X_2\widehat{\tens}X_3,X_4)}\circ  T_{1((23)4)}\circ\phi_{W_1,W_2,W_3,W_4} \nonumber\\
 & =(\Id_{X_1}\widehat{\tens}\fA_{X_2,X_3,X_4})\circ T_{X_1,(X_2,X_3\widehat{\tens} X_4)}\circ T_{1(2(34))}\circ\phi_{W_1,W_2,W_3,W_4}\circ(\Id_{W_1}\tens\cA_{W_2,W_3,W_4}^{-1})\nonumber\\
 & = (\Id_{X_1}\widehat{\tens}\fA_{X_2,X_3,X_4})\circ T_{X_1,(X_2,X_3\widehat{\tens} X_4)}\circ\phi_{W_1,W_2,\im\phi_{W_3,W_4}}\circ\nonumber\\
 &\hspace{8em}\circ(\Id_{W_1}\tens(\Id_{W_2}\tens\phi_{W_3,W_4}'))\circ(\Id_{W_1}\tens\cA_{W_2,W_3,W_4}^{-1})\nonumber\\
 & = (\Id_{X_1}\widehat{\tens}\fA_{X_2,X_3,X_4})\circ\phi_{W_1,\im\phi_{W_2,\im\phi_{W_3,W_4}}}\circ(\Id_{W_1}\tens\phi_{W_2,\im\phi_{W_3,W_4}}')\circ\nonumber\\
 &\hspace{8em}\circ(\Id_{W_1}\tens(\Id_{W_2}\tens\phi_{W_3,W_4}'))\circ(\Id_{W_1}\tens\cA_{W_2,W_3,W_4}^{-1})\nonumber\\
 & =\phi_{W_1,\fA_{X_2,X_3,X_4}(\im\phi_{W_2,\im\phi_{W_3,W_4}})}\circ(\Id_{W_1}\tens\fA_{X_2,X_3,X_4}\vert_{\im\phi_{W_2,\im\phi_{W_3,W_4}}})\circ\nonumber\\
 &\hspace{8em}\circ(\Id_{W_1}\tens\phi_{W_2,\im\phi_{W_3,W_4}}')\circ(\Id_{W_1}\tens(\Id_{W_2}\tens\phi_{W_3,W_4}'))\circ(\Id_{W_1}\tens\cA_{W_2,W_3,W_4}^{-1}).
\end{align*}
Now,
\begin{align*}
 \fA_{X_2,X_3,X_4} & \vert_{\im\phi_{W_2,\im\phi_{W_3,W_4}}}\circ\phi_{W_2,\im\phi_{W_3,W_4}}'\circ(\Id_{W_2}\tens\phi_{W_3,W_4}')\nonumber\\
 & =T_{(X_2,X_3),X_4}\circ\widehat{\cA}_{X_2,X_3,X_4}\circ T_{X_2,(X_3,X_4)}^{-1}\circ\phi_{W_2,\im\phi_{W_3,W_4}}\circ(\Id_{W_2}\tens\phi_{W_3,W_4}')\nonumber\\
 & =T_{(X_2,X_3),X_4}\circ\widehat{\cA}_{X_2,X_3,X_4}\circ\phi_{W_2,W_3,W_4}\nonumber\\
 & =T_{(X_2,X_3),X_4}\circ\phi_{W_2,W_3,W_4}\circ\cA_{W_2,W_3,W_4}\nonumber\\
 & =\phi_{\im\phi_{W_2,W_3},W_4}'\circ(\phi_{W_2,W_3}'\tens\Id_{W_4})\circ\cA_{W_2,W_3,W_4}.
\end{align*}
So this means
\begin{align*}
  T_{X_1,(X_2\widehat{\tens}X_3,X_4)}\circ  & T_{1((23)4)}\circ\phi_{W_1,W_2,W_3,W_4} \nonumber\\
  & =\phi_{W_1,\im\phi_{\im\phi_{W_2,W_3}, W_4}}\circ(\Id_{W_1}\tens\phi'_{\im\phi_{W_2,W_3},W_4})\circ(\Id_{W_1}\tens(\phi_{W_2,W_3}'\tens\Id_{W_4}))\nonumber\\
  & =T_{X_1,(X_2\widehat{\tens}X_3,X_4)}\circ\phi_{W_1,\im\phi_{W_2,W_3},W_4}\circ(\Id_{W_1}\tens(\phi_{W_2,W_3}'\tens\Id_{W_4})).
\end{align*}
That is, $T_{1((23)4)}$ is the unique homomorphism such that
\begin{equation*}
 T_{1((23)4)}\circ\phi_{W_1,W_2,W_3,W_4} = \phi_{W_1,\im\phi_{W_2,W_3},W_4}\circ(\Id_{W_1}\tens(\phi_{W_2,W_3}'\tens\Id_{W_4}))
\end{equation*}
for all $W_k\in I_{X_k}$, $k=1,2,3,4$. Similarly, we can prove that
\begin{equation*}
 T_{(1(23))4}\circ\phi_{W_1,W_2,W_3,W_4} =\phi_{W_1,\im\phi_{W_2,W_3},W_4}\circ((\Id_{W_1}\tens\phi_{W_2,W_3}')\tens\Id_{W_4})
\end{equation*}
for $W_k\in I_{X_k}$.

Now we can prove that the center left square in \eqref{diag:pentagon_three} commutes:
\begin{align*}
\widehat{\cA}_{X_1,X_2\widehat{\tens}X_3,X_4}\circ & T_{1((23)4)}\circ \phi_{W_1,W_2,W_3,W_4}\nonumber\\ & =\widehat{\cA}_{X_1,X_2\widehat{\tens}X_3,X_4}\circ\phi_{W_1,\im\phi_{W_2,W_3},W_4}\circ(\Id_{W_1}\tens(\phi_{W_2,W_3}'\tens\Id_{W_4}))\nonumber\\
& =\phi_{W_1,\im\phi_{W_2,W_3},W_4}\circ\cA_{W_1,\im\phi_{W_2,W_3},W_4}\circ(\Id_{W_1}\tens(\phi_{W_2,W_3}'\tens\Id_{W_4}))\nonumber\\
& =\phi_{W_1,\im\phi_{W_2,W_3},W_4}\circ((\Id_{W_1}\tens\phi_{W_2,W_3}')\tens\Id_{W_4})\circ\cA_{W_1,W_2\tens W_3,W_4}\nonumber\\
& =T_{(1(23))4}\circ\phi_{W_1,W_2,W_3,W_4}\circ\cA_{W_1,W_2\tens W_3,W_4}
\end{align*}
for all $W_1$, $W_2$, $W_3$, $W_4$, as required. Now the pentagon axiom for $\fA$ follows from the commutative diagrams \eqref{diag:pentagon_two} and \eqref{diag:pentagon_three}, the fact that $T_{1(2(34))}$ and $T_{X_1,(X_2,X_3\widehat{\tens}X_4)}$ are isomorphisms, and the pentagon axiom for $\cA$.

Next, the hexagon axioms for $\ind(\cC)$ follow from the hexagon axioms in $\cC$, the fact that $T_{X_1,(X_2,X_3)}$ is an isomorphism, and the following diagrams for modules $X_1$, $X_2$, $X_3$ in $\ind(\cC)$, provided they commute:
\begin{equation*}
\xymatrixcolsep{4pc}
\xymatrix{
W_1\tens(W_2\tens W_3) \ar[r]^(.45){\phi_{W_1,W_2,W_3}} \ar[d]_{\cA_{W_1,W_2,W_3}} & \varinjlim\alpha_{X_1}\tens(\alpha_{X_2}\tens\alpha_{X_3}) \ar[r]^(.53){T_{X_1,(X_2,X_3)}} \ar[d]^{\widehat{\cA}_{X_1,X_2,X_3}} & \varinjlim\alpha_{X_1}\tens\alpha_{X_2\widehat{\tens}X_3} \ar[d]^{\fA_{X_1,X_2,X_3}} \\ (W_1\tens W_2)\tens W_3 \ar[r]^(.45){\phi_{W_1,W_2,W_3}} \ar[d]_{\cR^{\pm 1}} & \varinjlim(\alpha_{X_1}\tens\alpha_{X_2})\tens\alpha_{X_3} \ar[r]^(.53){T_{(X_1,X_2),X_3}} & \varinjlim\alpha_{X_1\widehat{\tens}X_2}\tens\alpha_{X_3} \ar[d]^{\widehat{\cR}^{\pm 1}} \\ W_3\tens(W_1\tens W_2) \ar[r]^(.45){\phi_{W_3,W_1,W_2}} \ar[d]_{\cA_{W_3,W_1,W_2}} &  \ar[d]^{\widehat{\cA}_{X_3,X_1,X_2}}  \varinjlim\alpha_{X_3}\tens(\alpha_{X_1}\tens\alpha_{X_2}) \ar[r]^(.53){T_{X_3,(X_1,X_2)}} & \varinjlim\alpha_{X_3}\tens\alpha_{X_1\widehat{\tens}X_2} \ar[d]^{\fA_{X_3,X_1,X_2}} \\ (W_3\tens W_1)\tens W_2 \ar[r]_(.45){\phi_{W_3,W_1,W_2}} & \varinjlim(\alpha_{X_3}\tens\alpha_{X_1})\tens\alpha_{X_2} \ar[r]_(.53){T_{(X_3,X_1),X_2}} & \varinjlim\alpha_{X_3\widehat{\tens}X_1}\tens\alpha_{X_2} \\
 }
\end{equation*}
and
\begin{equation*}
\xymatrixcolsep{4pc}
\xymatrix{
W_1\tens(W_2\tens W_3) \ar[r]^(.45){\phi_{W_1,W_2,W_3}} \ar[d]_{\Id_{W_1}\tens\cR^{\pm 1}} & \varinjlim\alpha_{X_1}\tens(\alpha_{X_2}\tens\alpha_{X_3}) \ar[r]^(.53){T_{X_1,(X_2,X_3)}}  & \varinjlim\alpha_{X_1}\tens\alpha_{X_2\widehat{\tens}X_3} \ar[d]^{\Id_{X_1}\widehat{\tens}\widehat{\cR}^{\pm 1}} \\ W_1\tens (W_3\tens W_2) \ar[r]^(.45){\phi_{W_1,W_3,W_2}} \ar[d]_{\cA_{W_1,W_3,W_2}} & \varinjlim\alpha_{X_1}\tens(\alpha_{X_3}\tens\alpha_{X_2}) \ar[d]^{\widehat{\cA}_{X_1,X_3,X_2}} \ar[r]^(.53){T_{X_1,(X_3,X_2)}} & \varinjlim\alpha_{X_1}\tens\alpha_{X_3\widehat{\tens}X_2} \ar[d]^{\fA_{X_1,X_3,X_2}} \\ (W_1\tens W_3)\tens W_2 \ar[r]^(.45){\phi_{W_1,W_3,W_2}} \ar[d]_{\cR^{\pm 1}\tens\Id_{W_2}} &    \varinjlim(\alpha_{X_1}\tens\alpha_{X_3})\tens\alpha_{X_2} \ar[r]^(.53){T_{(X_1,X_3),X_2}} & \varinjlim\alpha_{X_1\widehat{\tens}X_3}\tens\alpha_{X_2} \ar[d]^{\widehat{\cR}^{\pm 1}\widehat{\tens}\Id_{X_2}} \\ (W_3\tens W_1)\tens W_2 \ar[r]_(.45){\phi_{W_3,W_1,W_2}} & \varinjlim(\alpha_{X_3}\tens\alpha_{X_1})\tens\alpha_{X_2} \ar[r]_(.53){T_{(X_3,X_1),X_2}} & \varinjlim\alpha_{X_3\widehat{\tens}X_1}\tens\alpha_{X_2} \\
 }
\end{equation*}
for $W_k\in I_{X_k}$. We just need to consider the cycles in the diagrams that involve braiding isomorphisms. Indeed, we have
\begin{align*}
 \widehat{\cR}^{\pm 1}\circ T_{(X_1,X_2),X_3}\circ\phi_{W_1,W_2,W_3} & = \widehat{\cR}^{\pm 1}\circ\phi_{\im\phi_{W_1,W_2}, W_3}\circ(\phi_{W_1,W_2}'\tens\Id_{W_3})\nonumber\\
 & =\phi_{W_3,\im\phi_{W_1,W_2}}\circ\cR^{\pm 1}\circ(\phi_{W_1,W_2}'\tens\Id_{W_3}\nonumber\\
 & =\phi_{W_3,\im\phi_{W_1,W_2}}\circ(\Id_{W_3}\tens\phi_{W_1,W_2}')\circ\cR^{\pm 1}\nonumber\\
 & =T_{X_3,(X_1,X_2)}\circ\phi_{W_3,W_1,W_2}\circ\cR^{\pm 1}
\end{align*}
and
\begin{align*}
( \Id_{X_1}\widehat{\tens}\widehat{\cR}^{\pm 1})\circ T_{X_1,(X_2,X_3)}\circ & \phi_{W_1,W_2,W_3}  =( \Id_{X_1}\widehat{\tens}\widehat{\cR}^{\pm 1})\circ\phi_{W_1,\im\phi_{W_2,W_3}}\circ(\Id_{W_1}\tens\phi_{W_2,W_3}')\nonumber\\
& =\phi_{W_1,\widehat{\cR}^{\pm 1}(\im\phi_{W_2,W_3})}\circ(\Id_{W_1}\tens\widehat{\cR}^{\pm 1}\vert_{\im\phi_{W_2,W_3}})\circ(\Id_{W_1}\tens\phi_{W_2,W_3}')\nonumber\\
& = \phi_{W_1,\im\phi_{W_3,W_2}}\circ(\Id_{W_1}\tens\phi_{W_3,W_2}')\circ(\Id_{W_1}\tens\cR^{\pm 1})\nonumber\\
& =T_{X_1,(X_3,X_2)}\circ\phi_{W_1,W_3,W_2}\circ(\Id_{W_1}\tens\cR^{\pm 1}).
\end{align*}
Similarly,
\begin{equation*}
 (\widehat{\cR}^{\pm 1}\widehat{\tens}\Id_{X_2})\circ T_{(X_1,X_3),X_2}\circ\phi_{W_1,W_3,W_2} = T_{(X_3,X_1),X_2}\circ\phi_{W_3,W_1,W_2}\circ(\cR^{\pm 1}\tens\Id_{W_2}),
\end{equation*}
so the hexagon axioms hold for $\widehat{\cR}$.

Finally, we need to show that $\Theta$ is a twist on $\ind(\cC)$, that is, $\Theta_V=\Id_V$ and the balancing equation
 \begin{equation*}
  \Theta_{X_1\widehat{\tens} X_2}=\widehat{\cR}_{X_2,X_1}\circ\widehat{\cR}_{X_1,X_2}\circ(\Theta_{X_1}\widehat{\tens}\Theta_{X_2})
 \end{equation*}
holds for modules $X_1$, $X_2$ in $\ind(\cC)$. Because $\theta_V=\Id_V$, the definition shows that $\widehat{\theta}_V=\Id_{\varinjlim\alpha_V}$. So $\Theta_V=Q_V\circ Q_V^{-1} = \Id_V$. For the balancing equation, we need to show that
 \begin{equation}\label{eqn:Theta_balance}
  \Theta_{X_1\widehat{\tens} X_2}\circ\phi_{W_1,W_2} =\widehat{\cR}_{X_2,X_1}\circ\widehat{\cR}_{X_1,X_2}\circ(\Theta_{X_1}\widehat{\tens}\Theta_{X_2})\circ\phi_{W_1,W_2}
 \end{equation}
for $(W_1,W_2)\in I_{X_1}\times I_{X_2}$. On the one hand, the definitions,  \eqref{diag:Theta}, and the balancing equation for $\theta$ show
\begin{align*}
 \widehat{\cR}_{X_2,X_1}\circ\widehat{\cR}_{X_1,X_2} & \circ  (\Theta_{X_1}\widehat{\tens}\Theta_{X_2})\circ\phi_{W_1,W_2}\nonumber\\
 &= \widehat{\cR}_{X_2,X_1}\circ\widehat{\cR}_{X_1,X_2}\circ\phi_{\Theta_{X_1}(W_1),\Theta_{X_2}(W_2)}\circ(\Theta_{X_1}\vert_{W_1}\tens\Theta_{X_2}\vert_{W_2})\nonumber\\
 & =\widehat{\cR}_{X_2,X_1}\circ\widehat{\cR}_{X_1,X_2}\circ\phi_{W_1,W_2}\circ(\theta_{W_1}\tens\theta_{W_2})\nonumber\\
 & =\phi_{W_1,W_2}\circ\cR_{W_2,W_1}\circ\cR_{W_1,W_2}\circ(\theta_{W_1}\tens\theta_{W_2})\nonumber\\
 & =\phi_{W_1,W_2}\circ\theta_{W_1\tens W_2}.
\end{align*}
On the other hand, since $\im\phi_{W_1,W_2}$ is a $\cC$-submodule of $X_1\widehat{\tens}X_2$, we can use \eqref{diag:Theta} and the naturality of $\theta$ to conclude
\begin{align*}
 \Theta_{X_1\widehat{\tens}X_2}\circ\phi_{W_1,W_2} &=\Theta_{X_1\widehat{\tens}X_2}\circ i_{\im\phi_{W_1,W_2}}\circ\phi_{W_1,W_2}'\nonumber\\ & =i_{\im\phi_{W_1,W_2}}\circ\theta_{\im\phi_{W_1,W_2}}\circ\phi_{W_1,W_2}'\nonumber\\
 & =i_{\im\phi_{W_1,W_2}}\circ\phi_{W_1,W_2}'\circ\theta_{W_1\tens W_2}\nonumber\\
 & =\phi_{W_1,W_2}\circ\theta_{W_1\tens W_2},
\end{align*}
recalling that $\phi_{W_1,W_2}'$ denotes the surjection $W_1\tens W_2\rightarrow \im\phi_{W_1,W_2}$ induced by $\phi_{W_1,W_2}$. This proves \eqref{eqn:Theta_balance}.
\end{proof}

Since $\cC$ is a subcategory of $\ind(\cC)$, we should check that the braided tensor category structure on $\ind(\cC)$ is actually an extension of the braided tensor category structure on $\cC$:
\begin{thm}\label{thm:braid_tens_inc}
 The embedding of $\cC$ into $\ind(\cC)$ is a braided tensor functor. In particular, the restriction of the braided tensor category structure with twist $(\ind(\cC),\widehat{\tens},V,\mathfrak{l},\mathfrak{r},\fA,\widehat{\cR},\Theta)$ to $\cC$ is equivalent to $(\cC,\tens,V,l,r,\cA,\cR,\theta)$.
\end{thm}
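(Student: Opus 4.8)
The plan is to exhibit the inclusion $\iota\colon\cC\to\ind(\cC)$ as a braided tensor functor by equipping it with the tensor-structure isomorphism furnished by the maps $\phi_{W_1,W_2}$, and then to verify the five coherence conditions (two unit, one associativity, one braiding) together with compatibility of the twists. The governing observation is that when $W\in\cC$ the directed set $I_W$ has the terminal object $W$ itself, so that $i_W=\Id_W$ and $Q_W^{-1}=\phi_W$. Consequently, for $W_1,W_2\in\cC$ the directed set $I_{W_1}\times I_{W_2}$ has terminal object $(W_1,W_2)$; hence $W_1\widehat{\tens}W_2=\varinjlim\alpha_{W_1}\tens\alpha_{W_2}$ is realized by $W_1\tens W_2$, the map $\phi_{W_1,W_2}\colon W_1\tens W_2\to W_1\widehat{\tens}W_2$ is an isomorphism, $\im\phi_{W_1,W_2}=W_1\widehat{\tens}W_2$, and $\phi'_{W_1,W_2}=\phi_{W_1,W_2}$. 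I would take $J_{W_1,W_2}=\phi_{W_1,W_2}$ as the tensor-structure isomorphism and $\Id_V$ as the unit comparison. Naturality of $J$ in $W_1,W_2$ is precisely the defining diagram of $f_1\widehat{\tens}f_2$ evaluated at the terminal submodules $(W_1,W_2)$, using that $\phi_{f_1(W_1),f_2(W_2)}=\phi_{W_1',W_2'}\circ(\text{inclusions})$ and that the inclusions absorb the restrictions $f_k\vert_{W_k}$ into $f_k$.

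Next I would dispatch the unit, braiding, and twist coherences directly from the defining diagrams, restricted to terminal submodules. Taking $U=V$ (terminal in $I_V$), the defining diagram of $\mathfrak{l}_W$ collapses, with $i_V=\Id_V$ and $i_W=\Id_W$, to $\mathfrak{l}_W\circ\phi_{V,W}=l_W$; similarly $\mathfrak{r}_W\circ\phi_{W,V}=r_W$. These are exactly the two unit coherence conditions. The defining diagram of $\widehat{\cR}$ at $(W_1,W_2)$ gives $\widehat{\cR}_{W_1,W_2}\circ\phi_{W_1,W_2}=\phi_{W_2,W_1}\circ\cR_{W_1,W_2}$, the braiding coherence. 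For the twist, diagram \eqref{diag:Theta} applied with $X=W$ and the $\cC$-submodule $W$ itself (so $i_W=\Id_W$) yields $\Theta_W=\theta_W$, i.e. $\Theta$ restricts to $\theta$.

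The substantive step is the associativity coherence, which I would phrase as
\[
(\phi_{W_1,W_2}\widehat{\tens}\Id_{W_3})\circ\phi_{W_1\tens W_2,W_3}\circ\cA_{W_1,W_2,W_3}=\fA_{W_1,W_2,W_3}\circ(\Id_{W_1}\widehat{\tens}\phi_{W_2,W_3})\circ\phi_{W_1,W_2\tens W_3}
\]
for $W_1,W_2,W_3\in\cC$. Here I would reuse the characterizations already established while proving the hexagon axioms. Evaluating at terminal submodules and using $\phi'_{W_i,W_j}=\phi_{W_i,W_j}$ and $\im\phi_{W_i,W_j}=W_i\widehat{\tens}W_j$, the morphism $(\Id_{W_1}\widehat{\tens}\phi_{W_2,W_3})\circ\phi_{W_1,W_2\tens W_3}$ coincides with $T_{W_1,(W_2,W_3)}\circ\phi_{W_1,W_2,W_3}$, and likewise $(\phi_{W_1,W_2}\widehat{\tens}\Id_{W_3})\circ\phi_{W_1\tens W_2,W_3}$ coincides with $T_{(W_1,W_2),W_3}\circ\phi_{W_1,W_2,W_3}$. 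Combining these with the defining relation $\widehat{\cA}_{W_1,W_2,W_3}\circ\phi_{W_1,W_2,W_3}=\phi_{W_1,W_2,W_3}\circ\cA_{W_1,W_2,W_3}$ and with $\fA_{W_1,W_2,W_3}=T_{(W_1,W_2),W_3}\circ\widehat{\cA}_{W_1,W_2,W_3}\circ T_{W_1,(W_2,W_3)}^{-1}$, both sides reduce to $T_{(W_1,W_2),W_3}\circ\widehat{\cA}_{W_1,W_2,W_3}\circ\phi_{W_1,W_2,W_3}$, giving the coherence. I expect this assembly to be the main obstacle: not because it is conceptually deep, but because it requires carefully matching the overloaded $\phi$-notation (the two- and three-index maps, together with the surjections $\phi'$ onto images) against the correct source and target modules. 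Once the terminal-object simplifications are in place, the verification is mechanical.

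Finally, since $\iota$ is the inclusion of a full subcategory it is automatically fully faithful, and it is essentially surjective onto its image $\cC\subseteq\ind(\cC)$. Together with the verified monoidal, braided, and twist coherences, this shows that $\iota$ is an equivalence of braided tensor categories with twist onto $\cC\subseteq\ind(\cC)$, which is the asserted restriction statement.
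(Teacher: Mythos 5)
Your proposal is correct and is essentially the paper's own proof: the paper equips the embedding with a tensor-structure isomorphism $Q_{W_1,W_2}$ that it shows is precisely the inverse of your $J_{W_1,W_2}=\phi_{W_1,W_2}$, and it verifies the same unit, associativity (via $T_{W_1,(W_2,W_3)}$, $T_{(W_1,W_2),W_3}$, $\widehat{\cA}$, and the definition $\fA = T_{(X_1,X_2),X_3}\circ\widehat{\cA}_{X_1,X_2,X_3}\circ T_{X_1,(X_2,X_3)}^{-1}$), braiding, and twist coherences, the last likewise by specializing \eqref{diag:Theta}. The only difference is presentational: where you evaluate each defining diagram once at the terminal pair of $I_{W_1}\times I_{W_2}$ (where $\phi'_{W_1,W_2}=\phi_{W_1,W_2}$ and $\im\phi_{W_1,W_2}=W_1\widehat{\tens}W_2$), the paper checks the coherence diagrams against arbitrary $\cC$-submodule pairs $(U_1,U_2)$ using naturality of $l$, $r$, $\cA$, $\cR$ in $\cC$ --- a slightly longer but equivalent verification.
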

\begin{proof}
First observe from \eqref{diag:Theta} that $\Theta_W=\theta_W$ for modules $W$ in $\cC$.

Now to show that the embedding of $\cC$ into $\ind(\cC)$ is a braided tensor functor, we need a natural isomorphism $Q: \widehat{\tens}\vert_{\cC\times\cC}\rightarrow\tens$ such that the diagrams
 \begin{equation}\label{diag:Q_unit_compat}
  \xymatrix{
  V\widehat{\tens} W \ar[r]^{Q_{V,W}} \ar[rd]_{\mathfrak{l}_W} & V\tens W \ar[d]^{l_W} \\
  & W \\
  } \qquad\text{and}\qquad
  \xymatrix{
  W\widehat{\tens}V \ar[r]^{Q_{W,V}} \ar[rd]_{\mathfrak{r}_W} & W\tens V \ar[d]^{r_W} \\
  & W \\
  }
 \end{equation}
commute for any module $W$ in $\cC$, the diagram
\begin{equation}\label{diag:Q_assoc_compat}
 \xymatrixcolsep{4pc}
 \xymatrix{
 W_1\widehat{\tens}(W_2\widehat{\tens}W_3) \ar[r]^{\fA_{W_1,W_2,W_3}} \ar[d]_{\Id_{W_1}\widehat{\tens}Q_{W_2,W_3}} & (W_1\widehat{\tens}W_2)\widehat{\tens} W_3 \ar[d]^{Q_{W_1,W_2}\widehat{\tens}\Id_{W_3}} \\
 W_1\widehat{\tens}(W_2\tens W_3) \ar[d]_{Q_{W_1,W_2\tens W_3}} & (W_1\tens W_2)\widehat{\tens} W_3 \ar[d]^{Q_{W_1\tens W_2,W_3}} \\
 W_1\tens(W_2\tens W_3) \ar[r]_{\cA_{W_1,W_2,W_3}} & (W_1\tens W_2)\tens W_3) \\
 }
\end{equation}
commutes for any modules $W_1$, $W_2$, and $W_3$ in $\cC$,
and the diagram
\begin{equation}\label{diag:Q_braid_compat}
\xymatrixcolsep{3pc}
\xymatrix{
W_1\widehat{\tens} W_2 \ar[r]^{\widehat{\cR}_{W_1,W_2}} \ar[d]_{Q_{W_1,W_2}} & W_2\widehat{\tens}W_1 \ar[d]^{Q_{W_2,W_1}} \\
W_1\tens W_2 \ar[r]_{\cR_{W_1,W_2}} & W_2\tens W_1 \\
}
\end{equation}
commutes for modules $W_1$ and $W_2$ in $\cC$.

For modules $W_1$ and $W_2$ in $\cC$, we define $Q_{W_1,W_2}$ to be the unique morphism such that
\begin{equation*}
 \xymatrixcolsep{3pc}
 \xymatrix{
 U_1\tens U_2 \ar[d]_{\phi_{U_1,U_2}} \ar[rd]^{i_{U_1}\tens i_{U_2}} \\
 \varinjlim\alpha_{W_1}\tens\alpha_{W_2} \ar[r]_(.58){Q_{W_1,W_2}} & W_1\tens W_2 \\
 }
\end{equation*}
commutes for all $U_1\in I_{W_1}$ and $U_2\in I_{W_2}$. It is clear from the universal property of $\varinjlim\alpha_{W_1}\tens\alpha_{W_2}$ that $Q_{W_1,W_2}$ exists, and it is an isomorphism with inverse $\phi_{W_1,W_2}$. Indeed, the definition of $Q_{W_1,W_2}$ shows that
\begin{equation*}
 Q_{W_1,W_2}\circ\phi_{W_1,W_2} =\Id_{W_1}\tens\Id_{W_2}=\Id_{W_1\tens W_2}.
\end{equation*}
On the other hand, the diagram
\begin{equation*}
 \xymatrixcolsep{4pc}
 \xymatrix{
 & U_1\tens U_2 \ar[ld]_{\phi_{U_1,U_2}} \ar[d]^{i_{U_1}\tens i_{U_2}} \ar[rd]^{\phi_{U_1,U_2}} & \\
 \varinjlim\alpha_{W_1}\tens\alpha_{W_2} \ar[r]_(.55){Q_{W_1,W_2}} & W_1\tens W_2 \ar[r]_(.45){\phi_{W_1,W_2}} & \varinjlim\alpha_{W_1}\tens\alpha_{W_2} \\
 }
\end{equation*}
for $U_1\in I_{W_1}$ and $U_2\in I_{W_2}$ shows that
\begin{equation*}
 \phi_{W_1,W_2}\circ Q_{W_1,W_2} =\Id_{W_1\widehat{\tens} W_2}.
\end{equation*}
(The triangle on the right side here commutes because $i_{U_1}=f_{U_1}^{W_1}$ and $i_{U_2}=f_{U_2}^{W_2}$.) We also need to show that $Q$ is a natural isomorphism, that is,
\begin{equation}\label{eqn:Q_nat_trans}
 Q_{\til{W}_1,\til{W}_2}\circ(F_1\widehat{\tens} F_2)=(F_1\tens F_2)\circ Q_{W_1,W_2}
\end{equation}
for homomorphisms $F_1: W_1\rightarrow \til{W}_1$ and $F_2:W_2\rightarrow\til{W}_2$ in $\cC$. From the commutative diagram
\begin{equation*}
 \xymatrixcolsep{4pc}
 \xymatrix{
  U_1\tens U_2 \ar[d]_{\phi_{U_1,U_2}} \ar[r]^(.42){F_1\vert_{U_1}\tens F_2\vert_{U_2}} & F_1(U_1)\tens F_2(U_2) \ar[d]_{\phi_{F_1(U_1),F_2(U_2)}} \ar[rd]^{\quad i_{F_1(U_1)}\tens i_{F_2(U_2)}} \\
 \varinjlim\alpha_{W_1}\tens\alpha_{W_2} \ar[r]_{F_1\widehat{\tens} F_2} & \varinjlim\alpha_{\til{W}_1}\tens\alpha_{\til{W}_2} \ar[r]_(.55){Q_{\til{W}_1,\til{W}_2}} & \til{W}_1\tens\til{W}_2 \\
 }
\end{equation*}
for $U_1\in I_{W_1}$ and $U_2\in I_{W_2}$, we get
\begin{align*}
 Q_{\til{W}_1,\til{W}_2}\circ(F_1\widehat{\tens} F_2)\circ\phi_{U_1,U_2} & = (i_{F_1(U_1)}\tens i_{F_2(U_2)})\circ (F_1\vert_{U_1}\tens F_2\vert_{U_2})\nonumber\\
 & =(F_1\tens F_2)\circ (i_{U_1}\tens i_{U_2}) \nonumber\\
 & =(F_1\tens F_2)\circ Q_{W_1,W_2}\circ\phi_{U_1,U_2}.
\end{align*}
This proves \eqref{eqn:Q_nat_trans}.

Now the diagrams in \eqref{diag:Q_unit_compat} commute as a consequence of the definitions of $\mathfrak{l}_W$ and $\mathfrak{r}_W$ together with the diagrams
\begin{equation*}
\xymatrixcolsep{3pc}
 \xymatrix{
  U\tens\til{U} \ar[d]_{\phi_{U,\til{U}}} \ar[rd]^(.55){i_{U}\tens i_{\til{U}}} \ar[r]^{i_U\tens\Id_{\til{U}}} &  V\tens\til{U} \ar[r]^{l_{\til{U}}} \ar[d]^{\Id_V\tens i_{\til{U}}}  & \til{U} \ar[d]^{i_{\til{U}}} \\
 V\widehat{\tens}W \ar[r]_{Q_{V,W}} & V\tens W \ar[r]_(.6){l_W} & W\\
 }
\end{equation*}
and
\begin{equation*}
 \xymatrixcolsep{3pc}
 \xymatrix{
  \til{U}\tens U \ar[d]_{\phi_{\til{U},U}} \ar[rd]^(.55){i_{\til{U}}\tens i_U} \ar[r]^{\Id_{\til{U}}\tens i_U} & \til{U}\tens V \ar[d]^{i_{\til{U}}\tens\Id_V} \ar[r]^{r_{\til{U}}}  & \til{U} \ar[d]^{i_{\til{U}}} \\
 W\widehat{\tens}V \ar[r]_{Q_{W,V}} & W\tens V \ar[r]_(.6){r_W} & W\\
 }
\end{equation*}
for $U\in I_V$, $\til{U}\in I_W$, which commute by the naturality of the unit isomorphisms in $\cC$.

For the diagram \eqref{diag:Q_assoc_compat}, we need to show that
\begin{align*}
 \cA_{W_1,W_2,W_3}\circ Q_{W_1,W_2\tens W_3} & \circ  (\Id_{W_1}\widehat{\tens} Q_{W_2,W_3})\circ T_{W_1,(W_2,W_3)}\nonumber\\
 &= Q_{W_1\tens W_2,W_3}\circ(Q_{W_1,W_2}\widehat{\tens}\Id_{W_3})\circ T_{(W_1,W_2),W_3}\circ\widehat{\cA}_{W_1,W_2,W_3}
\end{align*}
as morphisms in $\cW$ from $\varinjlim\alpha_{W_1}\tens(\alpha_{W_2}\tens\alpha_{W_3})$ to $(W_1\tens W_2)\tens W_3$. This follows from the calculations
\begin{align*}
& \cA_{W_1,W_2,W_3}\circ  Q_{W_1,W_2\tens W_3}\circ  (\Id_{W_1}\widehat{\tens} Q_{W_2,W_3})\circ T_{W_1,(W_2,W_3)}\circ \phi_{U_1,U_2,U_3}\nonumber\\
 &= \cA_{W_1,W_2,W_3}\circ Q_{W_1,W_2\tens W_3}\circ (\Id_{W_1}\widehat{\tens} Q_{W_2,W_3})\circ \phi_{U_1,\im\phi_{U_2,U_3}}\circ(\Id_{U_1}\tens\phi_{U_2,U_3}')\nonumber\\
 & = \cA_{W_1,W_2,W_3}\circ Q_{W_1,W_2\tens W_3}\circ \phi_{U_1,Q_{W_2,W_3}(\im\phi_{U_2,U_3})}\circ(\Id_{U_1}\tens Q_{W_2,W_3}\vert_{\im\phi_{U_2,U_3}})\circ(\Id_{U_1}\tens\phi_{U_2,U_3}') \nonumber\\
 & = \cA_{W_1,W_2,W_3}\circ Q_{W_1,W_2\tens W_3}\circ \phi_{U_1,\im(i_{U_2}\tens i_{U_3})}\circ(\Id_{U_1}\tens(i_{U_2}\tens i_{U_3})')\nonumber\\
 & =\cA_{W_1,W_2,W_3}\circ(i_{U_1}\tens(i_{U_2}\tens i_{U_3}))\nonumber\\
 & =((i_{U_1}\tens i_{U_2})\tens i_{U_3})\circ\cA_{U_1,U_2,U_3}
\end{align*}
and
\begin{align*}
& Q_{W_1\tens W_2,W_3}\circ(Q_{W_1,W_2}  \widehat{\tens}\Id_{W_3})\circ  T_{(W_1,W_2),W_3}\circ\widehat{\cA}_{W_1,W_2,W_3}\circ\phi_{U_1,U_2,U_3}\nonumber\\
 & =Q_{W_1\tens W_2,W_3}\circ(Q_{W_1,W_2}\widehat{\tens}\Id_{W_3})\circ  T_{(W_1,W_2),W_3}\circ\phi_{U_1,U_2,U_3}\circ\cA_{U_1,U_2,U_3}\nonumber\\
 & =Q_{W_1\tens W_2,W_3}\circ(Q_{W_1,W_2}\widehat{\tens}\Id_{W_3})\circ \phi_{\im\phi_{U_1,U_2},U_3}\circ(\phi_{U_1,U_2}'\tens\Id_{U_3})\circ\cA_{U_1,U_2,U_3}\nonumber\\
 & = Q_{W_1\tens W_2,W_3}\circ\phi_{Q_{W_1,W_2}(\im\phi_{U_1,U_2}),U_3}\circ(Q_{W_1,W_2}\vert_{\im\phi_{U_1,U_2}}\tens\Id_{U_3})\circ(\phi_{U_1,U_2}'\tens\Id_{U_3})\circ\cA_{U_1,U_2,U_3}\nonumber\\
 & = Q_{W_1\tens W_2,W_3}\circ\phi_{\im(i_{U_1}\tens i_{U_2}),U_3}\circ((i_{U_1}\tens i_{U_2})'\tens\Id_{U_3})\circ\cA_{U_1,U_2,U_3}\nonumber\\
 & =((i_{U_1}\tens i_{U_2})\tens i_{U_3})\circ\cA_{U_1,U_2,U_3}
\end{align*}
for objects $U_j\in I_{W_j}$, $j=1,2,3$. Here $(i_{U_1}\tens i_{U_2})': U_1\tens U_2\rightarrow\im(i_{U_1}\tens i_{U_2})$ is the surjection induced by $i_{U_1}\tens i_{U_2}$, and $(i_{U_2}\tens i_{U_3})'$ is similar.

Finally, \eqref{diag:Q_braid_compat} commutes as a result of the calculation
\begin{align*}
 Q_{W_2,W_1}\circ\widehat{\cR}_{W_1,W_2}\circ\phi_{U_1,U_2} & = Q_{W_2,W_1}\circ\phi_{U_2,U_1}\circ\cR_{U_1,U_2}\nonumber\\
 & = (i_{U_2}\tens i_{U_1})\circ\cR_{U_1,U_2}\nonumber\\
 & =\cR_{W_1,W_2}\circ(i_{U_1}\tens i_{U_2})\nonumber\\
 & =\cR_{W_1,W_2}\circ Q_{W_1,W_2}\circ\phi_{U_1,U_2}
\end{align*}
for objects $U_1\in I_{W_1}$ and $U_2\in I_{W_2}$.
\end{proof}

\section{The direct limit completion as a vertex tensor category}\label{sec:ind_lim_vrtx_tens}

Although the last section shows that braided tensor category structure on $\cC$ extends to braided tensor category structure on $\ind(\cC)$, we have not yet shown that this braided tensor category structure on $\ind(\cC)$ is the correct vertex algebraic structure. Especially, we have not yet related this braided tensor category structure to intertwining operators. In this section, we complete the proof of Theorem \ref{thm:main_thm}; the remaining conditions for this theorem that we need to impose now are the following:
\begin{assum}\label{assum:Sec6}
 The category $\cC$ of grading-restricted generalized $V$-modules satisfies:
 \begin{enumerate}
  \item The braided tensor category structure on $\cC$ is induced from vertex tensor category structure as described in Section \ref{sec:vertex_tensor}.

  \item For any intertwining operator $\cY$ of type $\binom{X}{W_1\,W_2}$ where $W_1$, $W_2$ are modules in $\cC$ and $X$ is a generalized module in $\ind(\cC)$, the image $\im\cY\subseteq X$ is a module in $\cC$.
 \end{enumerate}
\end{assum}

Using these conditions, as well as the assumptions of the previous sections, we first prove:
\begin{thm}\label{thm:indC_tens_prod_intw_op}
 Let $X_1$ and $X_2$ be modules in $\ind(\cC)$.
 \begin{enumerate}
  \item There is an intertwining operator $\cY_{X_1,X_2}$ of type $\binom{X_1\widehat{\tens} X_2}{X_1\,X_2}$ such that 
  if $W_1\subseteq X_1$ and $W_2\subseteq X_2$ are any $\cC$-submodules, then
  \begin{equation*}
   \cY_{X_1,X_2}\circ(i_{W_1}\otimes i_{W_2}) =\phi_{W_1,W_2}\circ\cY_{W_1,W_2},
  \end{equation*}
where $i_{W_1}: W_1\rightarrow X_1$, $i_{W_2}: W_2\rightarrow X_2$ are the inclusions and $\cY_{W_1,W_2}$ is the tensor product intertwining operator of type $\binom{W_1\tens W_2}{W_1\,W_2}$ in $\cC$.

\item For any weak module $X_3$ in $\ind(\cC)$, the linear map
\begin{align*}
 \hom_V(X_1\widehat{\tens} X_2, X_3) & \rightarrow \cV^{X_3}_{X_1, X_2}\nonumber\\
  F &\mapsto F\circ\cY_{X_1,X_2}
\end{align*}
is an isomorphism.
 \end{enumerate}
\end{thm}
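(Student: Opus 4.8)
The plan is to build $\cY_{X_1,X_2}$ by gluing together the tensor product intertwining operators $\cY_{W_1,W_2}$ over all $\cC$-submodules $W_1\subseteq X_1$, $W_2\subseteq X_2$, and then to verify the universal property by comparing, submodule by submodule, with the universal property of $\tens$ in $\cC$. For part (1), since $X_1$ and $X_2$ are unions of their $\cC$-submodules by Proposition \ref{prop:indC_as_unions}, every $x_1\in X_1$ and $x_2\in X_2$ lie in some $W_1\in I_{X_1}$ and $W_2\in I_{X_2}$, and I would set $\cY_{X_1,X_2}(x_1,x)x_2=(\phi_{W_1,W_2}\circ\cY_{W_1,W_2})(x_1,x)x_2$. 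Well-definedness is the first thing to check: for $W_1\subseteq\til{W}_1$ and $W_2\subseteq\til{W}_2$, the naturality relation $\cY_{\til{W}_1,\til{W}_2}\circ(f_{W_1}^{\til{W}_1}\otimes f_{W_2}^{\til{W}_2})=(f_{W_1}^{\til{W}_1}\tens f_{W_2}^{\til{W}_2})\circ\cY_{W_1,W_2}$ together with the direct-limit compatibility $\phi_{\til{W}_1,\til{W}_2}\circ(f_{W_1}^{\til{W}_1}\tens f_{W_2}^{\til{W}_2})=\phi_{W_1,W_2}$ shows that $\phi_{\til{W}_1,\til{W}_2}\circ\cY_{\til{W}_1,\til{W}_2}$ agrees with $\phi_{W_1,W_2}\circ\cY_{W_1,W_2}$ on $W_1\otimes W_2$; since $I_{X_1}$ and $I_{X_2}$ are directed, any two admissible choices are dominated by a common one, so the value is independent of $W_1,W_2$, and the stated formula $\cY_{X_1,X_2}\circ(i_{W_1}\otimes i_{W_2})=\phi_{W_1,W_2}\circ\cY_{W_1,W_2}$ is immediate. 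Finally, each of the three intertwining-operator axioms (lower truncation, the Jacobi identity, and the $L(-1)$-derivative property) is local, involving only finitely many vectors, so for fixed $x_1,x_2$ it reduces to the corresponding axiom for $\cY_{W_1,W_2}$ pushed forward through the $V$-module map $\phi_{W_1,W_2}$; since $\phi_{W_1,W_2}$ intertwines the $V$-actions, each axiom transfers.

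For part (2), injectivity of $F\mapsto F\circ\cY_{X_1,X_2}$ is easy: if $F\circ\cY_{X_1,X_2}=0$, then $F\circ\phi_{W_1,W_2}\circ\cY_{W_1,W_2}=0$ for every pair, and since $\cY_{W_1,W_2}$ is surjective this forces $F\circ\phi_{W_1,W_2}=0$; as $X_1\widehat{\tens} X_2$ is spanned by the images of the $\phi_{W_1,W_2}$ by Proposition \ref{prop:dir_lim_union}, we get $F=0$. For surjectivity I would start from an intertwining operator $\cY$ of type $\binom{X_3}{X_1\,X_2}$ and restrict it to each $W_1\otimes W_2$, obtaining an intertwining operator $\cY\circ(i_{W_1}\otimes i_{W_2})$ of type $\binom{X_3}{W_1\,W_2}$. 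Here is where Assumption \ref{assum:Sec6}(2) is essential: the image $Z_{W_1,W_2}=\im(\cY\circ(i_{W_1}\otimes i_{W_2}))$ is a $\cC$-submodule of $X_3$, so $\cY\circ(i_{W_1}\otimes i_{W_2})$ is a surjective intertwining operator onto an object of $\cC$, and the universal property of $(W_1\tens W_2,\cY_{W_1,W_2})$ in $\cC$ produces a unique $V$-homomorphism $g_{W_1,W_2}:W_1\tens W_2\to X_3$ with image in $Z_{W_1,W_2}$ satisfying $\cY\circ(i_{W_1}\otimes i_{W_2})=g_{W_1,W_2}\circ\cY_{W_1,W_2}$.

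It then remains to assemble the $g_{W_1,W_2}$ into a single map. Using the naturality of $\cY_{-,-}$, the definition of $g_{\til{W}_1,\til{W}_2}$, the identities $i_{\til{W}_k}\circ f_{W_k}^{\til{W}_k}=i_{W_k}$, and the surjectivity of $\cY_{W_1,W_2}$, I would verify the target compatibility $g_{\til{W}_1,\til{W}_2}\circ(f_{W_1}^{\til{W}_1}\tens f_{W_2}^{\til{W}_2})=g_{W_1,W_2}$, so that $(X_3,\lbrace g_{W_1,W_2}\rbrace)$ is a target of the direct system $\alpha_{X_1}\tens\alpha_{X_2}$. The universal property of $X_1\widehat{\tens} X_2=\varinjlim\alpha_{X_1}\tens\alpha_{X_2}$ then yields a unique $F:X_1\widehat{\tens} X_2\to X_3$ with $F\circ\phi_{W_1,W_2}=g_{W_1,W_2}$ for all $W_1,W_2$, and evaluating $F\circ\cY_{X_1,X_2}$ on vectors $x_1\in W_1$, $x_2\in W_2$ using part (1) recovers $\cY$.

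The main obstacle is precisely this surjectivity step. Without Assumption \ref{assum:Sec6}(2) guaranteeing that $\im(\cY\circ(i_{W_1}\otimes i_{W_2}))$ lies in $\cC$, the universal property of $\tens$ in $\cC$—which characterizes maps \emph{out of} $W_1\tens W_2$ only against intertwining operators with target \emph{inside} $\cC$—could not be invoked, and there would be no canonical way to factor the restricted intertwining operators through the $\cY_{W_1,W_2}$. Everything else is a routine transfer of the vertex-algebraic axioms and the colimit universal property; the content of the theorem is concentrated in the interplay between this image-finiteness hypothesis and the surjectivity of the tensor product intertwining operators.
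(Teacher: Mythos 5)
Your proposal is correct and follows essentially the same route as the paper: part (1) glues the tensor product intertwining operators $\cY_{W_1,W_2}$ through the maps $\phi_{W_1,W_2}$ (the paper packages your element-wise well-definedness check as the statement that $X_1\otimes X_2$, with the maps $i_{W_1}\otimes i_{W_2}$, is the vector-space direct limit of the $W_1\otimes W_2$, which is the same argument), and part (2) uses Assumption \ref{assum:Sec6}(2) to factor each restriction $\cY\circ(i_{W_1}\otimes i_{W_2})$ through $\cY_{W_1,W_2}$ via the universal property in $\cC$, checks target compatibility, and assembles $F$ by the universal property of $\varinjlim\alpha_{X_1}\tens\alpha_{X_2}$, exactly as in the paper. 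Your identification of the image-finiteness hypothesis as the crux of part (2) matches the paper's use of it precisely.
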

\begin{proof}
 To construct $\cY_{X_1,X_2}$, we first need to show that $(X_1\otimes X_2,\lbrace i_{W_1}\otimes i_{W_2}\rbrace_{(W_1,W_2)\in I_{X_1}\times I_{X_2}})$ is a direct limit (in the category of vector spaces) of the inductive system
 \begin{equation*}
  \alpha_{X_1}\otimes\alpha_{X_2}: I_{X_1}\times I_{X_2}\rightarrow\cV ec
 \end{equation*}
defined by
\begin{equation*}
 (\alpha_{X_1}\otimes\alpha_{X_2})(W_1,W_2)=W_1\otimes W_2
\end{equation*}
for $W_1\in I_{X_1}$, $W_2\in I_{X_2}$, and by
\begin{equation*}
f_{W_1,W_2}^{\til{W}_1,\til{W}_2} =f_{W_1}^{\til{W}_1}\otimes f_{W_2}^{\til{W}_2}.
\end{equation*}
In fact, since $(X_1\otimes X_2,\lbrace i_{W_1}\otimes i_{W_2}\rbrace_{(W_1,W_2)\in I_{X_1}\times I_{X_2}})$ is a target of $\alpha_{X_1}\otimes\alpha_{X_2}$, there is a unique linear map
\begin{equation*}
 F_{X_1,X_2}: \varinjlim\alpha_{X_1}\otimes\alpha_{X_2}\rightarrow X_1\otimes X_2
\end{equation*}
such that $F_{X_1,X_2}\circ \phi_{W_1,W_2}=i_{W_1}\otimes i_{W_2}$ for $W_1\in I_{X_1}$, $W_2\in I_{X_2}$. The map $F_{X_1,X_2}$ is surjective because $X_1$ and $X_2$ are both the unions of their $\cC$-submodules. To show that $F_{X_1,X_2}$ is also injective, we use the injectivity of $i_{W_1}\otimes i_{W_2}$, which follows from the exactness of the tensor product on $\cV ec$: If $F_{X_1,X_2}(i_{W_1,W_2}(\til{w}))=0$ for some $\til{w}\in W_1\otimes W_2$, then $(i_{W_1}\otimes i_{W_2})(\til{w})=0$ as well, so that $\til{w}=0$. Thus $F_{X_1,X_2}$ is an isomorphism that identifies $(X_1\otimes X_2,\lbrace i_{W_1}\otimes i_{W_2}\rbrace_{(W_1,W_2)\in I_{X_1}\times I_{X_2}})$ with the direct limit of $\alpha_{X_1}\otimes \alpha_{X_2}$.

Now by the universal property of direct limits in $\cV ec$, there is a unique linear map
$$\cY_{X_1,X_2}(\cdot,x)\cdot : X_1\otimes X_2\rightarrow(X_1\widehat{\tens} X_2)[\log x]\lbrace x\rbrace$$
such that the diagram
\begin{equation*}
 \xymatrixcolsep{3pc}
 \xymatrix{
 W_1\otimes W_2 \ar[d]_{i_{W_1}\otimes i_{W_2}} \ar[r]^(.4){\cY_{W_1,W_2}} & (W_1\tens W_2)[\log x]\lbrace x\rbrace \ar[d]^{\phi_{W_1,W_2}} \\
 X_1\otimes X_2 \ar[r]_(.4){\cY_{X_1,X_2}} & (X_1\widehat{\tens}X_2)[\log x]\lbrace x\rbrace \\
 }
\end{equation*}
commutes for all $W_1\in I_{X_1}$ and $W_2\in I_{X_2}$. Indeed, $\cY_{X_1,X_2}$ is well defined because
\begin{align*}
 \phi_{\til{W}_1,\til{W}_2}\circ\cY_{\til{W}_1,\til{W}_2}\circ(f_{W_1}^{\til{W}_1}\otimes f_{W_2}^{\til{W}_2}) & =\phi_{\til{W}_1,\til{W}_2}\circ(f_{W_1}^{\til{W}_1}\tens f_{W_2}^{\til{W}_2})\circ\cY_{W_1,W_2} =\phi_{W_1,W_2}\circ\cY_{W_1,W_2},
\end{align*}
by definition of the tensor product of morphisms in $\cC$. Then $\cY_{X_1,X_2}$ is an intertwining operator because each $\cY_{W_1,W_2}$ is an intertwining operator and because $X_j$ for $j=1,2$ is the union of its $\cC$-submodules. For example, $\cY_{X_1,X_2}$ satisfies the $L(-1)$-derivative property because
\begin{align*}
\dfrac{d}{dx} \cY_{X_1,X_2}(i_{W_1}(w_1),x)i_{W_2}(w_2) & =\phi_{W_1,W_2}\left(\dfrac{d}{dx}\cY_{W_1,W_2}(w_1,x)w_2\right)\nonumber\\
& =\phi_{W_1,W_2}\left(\cY_{W_1,W_2}(L_{W_1}(-1)w_1,x)w_2\right)\nonumber\\
& =\cY_{X_1,X_2}(i_{W_1}(L_{W_1}(-1)w_1),x)i_{W_2}(w_2)\nonumber\\
& =\cY_{X_1,X_2}(L_{X_1}(-1)i_{W_1}(w_1),x)i_{W_2}(w_2)
\end{align*}
for $W_1\in I_{X_1}$, $W_2\in I_{X_2}$, $w_1\in W_1$, and $w_2\in W_2$. Similarly, the Jacobi identity follows from the relations
\begin{align*}
 Y_{X_1\widehat{\tens}X_2}(v,x_1) & \cY_{X_1,X_2}(i_{W_1}(w_1),x_2)i_{W_2}(w_2)\nonumber\\
 & = Y_{X_1\widehat{\tens}X_2}(v,x_1)\phi_{W_1,W_2}\left(\cY_{W_1,W_2}(w_1,x_2)w_2\right)\nonumber\\
 & =\phi_{W_1,W_2}\left(Y_{W_1\tens W_2}(v,x_1)\cY_{W_1,W_2}(w_1,x_2)w_2\right),
 \end{align*}
 \begin{align*}
 \cY_{X_1,X_2}(i_{W_1} & (w_1),x_2) Y_{X_2}(v,x_1)i_{W_2}(w_2)\nonumber\\
 & = \cY_{X_1,X_2}(i_{W_1}(w_1),x_2)i_{W_2}\left(Y_{W_2}(v,x_1)w_2\right)\nonumber\\
 & =\phi_{W_1,W_2}\left(\cY_{W_1,W_2}(w_1,x_2)Y_{W_2}(v,x_1)w_2\right),
 \end{align*}
 \begin{align*}
 \cY_{X_1,X_2}(Y_{X_1} & (v,x_0)i_{W_1}(w_1),x_2)i_{W_2}(w_2)\nonumber\\
 & =\cY_{X_1,X_2}(i_{W_1}(Y_{W_1}(v,x_0)w_1),x_2)i_{W_2}(w_2)\nonumber\\
 &=\phi_{W_1,W_2}\left( \cY_{W_1,W_2}(Y_{W_1},v,x_0)w_1,x_2)w_2\right)
\end{align*}
and the fact that $\cY_{W_1,W_2}$ satisfies the Jacobi identity. This proves part (1) of the theorem.

For part (2), we need to show that if $\cY$ is an intertwining operator of type $\binom{X_3}{X_1\,X_2}$, then there is a unique weak $V$-module homomorphism $F: X_1\widehat{\tens}X_2\rightarrow X_3$ such that $F\circ\cY_{X_1,X_2}=\cY$. For each $W_1\in I_{X_1}$ and $W_2\in I_{X_2}$, $\cY\circ(i_{W_1}\otimes i_{W_2})$ is an intertwining operator of type $\binom{X_3}{W_1\,W_2}$, so by Assumption \ref{assum:Sec6}(2), $\im\cY\circ(i_{W_1}\otimes i_{W_2})$ is an object of $\cC$. Then by the universal property of the tensor product in $\cC$, there is a unique $V$-module homomorphism $F_{W_1,W_2}: W_1\tens W_2\rightarrow X_3$ such that
\begin{equation*}
 \cY\circ(i_{W_1}\otimes i_{W_2})=F_{W_1,W_2}\circ\cY_{W_1,W_2}.
\end{equation*}
By the universal property of the direct limit $X_1\widehat{\tens}X_2=\varinjlim\alpha_{X_1}\tens\alpha_{X_2}$, there is then a unique weak $V$-module homomorphism $F: X_1\widehat{\tens} X_2\rightarrow X_3$ such that $F\circ\phi_{W_1,W_2}=F_{W_1,W_2}$, provided
\begin{equation}\label{eqn:tens_univ_prop}
 F_{\til{W}_1,\til{W}_2}\circ(f_{W_1}^{\til{W_1}}\tens f_{W_2}^{\til{W}_2})=F_{W_1,W_2}
\end{equation}
for all $W_1\subseteq \til{W}_1$ in $I_{X_1}$ and $W_2\subseteq\til{W}_2$ in $I_{X_2}$. To show this, we calculate
\begin{align*}
  F_{\til{W}_1,\til{W}_2}\circ(f_{W_1}^{\til{W_1}}\tens f_{W_2}^{\til{W}_2})\circ\cY_{W_1,W_2} & = F_{\til{W}_1,\til{W}_2}\circ\cY_{\til{W}_1,\til{W}_2}\circ(f_{W_1}^{\til{W_1}}\otimes f_{W_2}^{\til{W}_2})\nonumber\\
 & =\cY\circ(i_{\til{W}_1}\otimes i_{\til{W}_2})\circ(f_{W_1}^{\til{W_1}}\otimes f_{W_2}^{\til{W}_2})\nonumber\\
 & =\cY\circ(i_{W_1}\otimes i_{W_2})\nonumber\\
 & = F_{W_1,W_2}\circ\cY_{W_1,W_2}.
\end{align*}
Then \eqref{eqn:tens_univ_prop} follows from the surjectivity of $\cY_{W_1,W_2}$. This shows that $F$ exists. Then for $W_1\in I_{X_1}$, $W_2\in I_{X_2}$, we have
\begin{align*}
 F\circ\cY_{X_1,X_2}\circ(i_{W_1}\otimes i_{W_2}) &=F\circ\phi_{W_1,W_2}\circ\cY_{W_1,W_2}\nonumber\\
 &= F_{W_1,W_2}\circ\cY_{W_1,W_2}\nonumber\\
 &=\cY\circ(i_{W_1}\otimes i_{W_2}).
\end{align*}
Since $X_1\otimes X_2$ is the union of the images of such $i_{W_1}\otimes i_{W_2}$, we get $F\circ\cY_{X_1,X_2}=\cY$. Finally, the uniqueness of $F$ follows from surjectivity of the intertwining operator $\cY_{X_1,X_2}$. This surjectivity is a consequence of the definition of $\cY_{X_1,X_2}$, since $X_1\widehat{\tens}X_2$ is spanned by the images of $\phi_{W_1,W_2}$ for $W_1\in I_{X_1}$ and $W_2\in I_{X_2}$, and since each $\cY_{W_1,W_2}$ is surjective.
\end{proof}

We will use the intertwining operators $\cY_{X_1,X_2}$ of the preceding theorem to describe the tensor category structure on $\ind(\cC)$. For describing the associativity isomorphisms, we will use the fact that every weak module $X$ in $\ind(\cC)$ is a generalized $V$-module, so that the contragredient $X'=\bigoplus_{h\in\CC} X_{[h]}^*$ exists, though not necessarily as a module in $\ind(\cC)$. Note that the full vector space dual $(X')^*$ contains the algebraic completion $\overline{X}=\prod_{h\in\CC} X_{[h]}$, but these two vector spaces are not equal unless all the $X_{[h]}$ are finite dimensional. A grading-preserving linear map $f: W\rightarrow X$ between two generalized $V$-modules extends to a linear map $\overline{f}: \overline{W}\rightarrow\overline{X}$ in the obvious way. It also induces a grading-preserving linear map
\begin{equation*}
 f^*: X'\rightarrow W'
\end{equation*}
that satisfies
\begin{equation*}
 \left\langle f^*(b'), \overline{w}\right\rangle =\left\langle b', \overline{f}(\overline{w})\right\rangle
\end{equation*}
for $b'\in X'$ and $\overline{w}\in\overline{W}$.

\begin{thm}\label{thm:indC_vrtx_tens}
 Under Assumptions \ref{assum:Sec4}, \ref{assum:Sec5}, and \ref{assum:Sec6}, the braided tensor category with twist structure on $\ind(\cC)$ is given as follows:
 \begin{enumerate}
 \item The tensor product of objects $X_1$ and $X_2$ in $\ind(\cC)$ is $X_1\widehat{\tens}X_2$ and the tensor product of morphisms $F_1: X_1\rightarrow\til{X}_1$ and $F_2: X_2\rightarrow\til{X}_2$ is characterized by
 \begin{equation*}
  (F_1\widehat{\tens}F_2)\left(\cY_{X_1,X_2}(b_1,x)b_2\right) =\cY_{\til{X}_1,\til{X}_2}(F_1(b_1),x)F_2(b_2)
 \end{equation*}
for $b_1\in X_1$, $b_2\in X_2$.

 \item The unit object of $\ind(\cC)$ is $V$, and for a module $X$ in $\ind(\cC)$ the unit isomorphisms are given by
  \begin{equation*}
   \mathfrak{l}_X\left(\cY_{V,X}(v,x)b\right) =Y_X(v,x)b \quad\text{and}\quad \mathfrak{r}_X\left(\cY_{X,V}(b,x)v\right) = e^{xL(-1)} Y_X(v,-x)b
  \end{equation*}
for $v\in V$, $b\in X$.

\item For modules $X_1$, $X_2$, and $X_3$ in $\ind(\cC)$, for vectors $b_1\in X_1$, $b_2\in X_2$, and $b_3\in X_3$, and for $r_1,r_2\in\RR$ such that $r_1>r_2>r_1-r_2>0$, the series
\begin{align}\label{eqn:assoc_prod_series}
  \sum_{h\in\CC} \cY_{X_1,X_2\widehat{\tens} X_3}(b_1,e^{\ln r_1})\pi_h\left(\cY_{X_2,X_3}(b_2,e^{\ln r_2})b_3\right)
\end{align}
of vectors in $\overline{X_1\widehat{\tens}(X_2\widehat{\tens} X_3)}$ converges absolutely to a vector
$$\cY_{X_1,X_2\widehat{\tens} X_3}(b_1,e^{\ln r_1})\cY_{X_2,X_3}(b_2,e^{\ln r_2})b_3\in\overline{X_1\widehat{\tens}(X_2\widehat{\tens}X_3)}\subseteq((X_1\widehat{\tens}(X_2\widehat{\tens}X_3))')^*,$$
and the series
\begin{align}\label{eqn:assoc_it_series}
 \sum_{h\in\CC} \cY_{X_1\widehat{\tens}X_2,X_3}\left(\pi_h\left(\cY_{X_1,X_2}(b_1,e^{\ln(r_1-r_2)})b_2\right),e^{\ln r_2}\right)b_3
\end{align}
of vectors in $\overline{(X_1\widehat{\tens}X_2)\widehat{\tens}X_3}$ converges absolutely to a vector $$\cY_{X_1\widehat{\tens}X_2,X_3}\left(\cY_{X_1,X_2}(b_1,e^{\ln(r_1-r_2)})b_2,e^{\ln r_2}\right)b_3\in\overline{(X_1\widehat{\tens}X_2)\widehat{\tens}X_3}\subseteq(((X_1\widehat{\tens}X_2)\widehat{\tens}X_3)')^*.$$
Moreover,
\begin{align}\label{eqn:IndC_vrtx_assoc}
\overline{\cA_{X_1,X_2,X_3}} & \left(\cY_{X_1,X_2\widehat{\tens} X_3}(b_1,e^{\ln r_1})\cY_{X_2,X_3}(b_2,e^{\ln r_2})b_3\right)\nonumber\\
&= \cY_{X_1\widehat{\tens}X_2,X_3}\left(\cY_{X_1,X_2}(b_1,e^{\ln(r_1-r_2)})b_2,e^{\ln r_2}\right)b_3.
\end{align}

\item For modules $X_1$ and $X_2$ in $\ind(\cC)$, the braiding isomorphism $\widehat{\cR}_{X_1,X_2}$ satisfies
\begin{equation*}
 \widehat{\cR}_{X_1,X_2}\left(\cY_{X_1,X_2}(b_1,x)b_2\right) =e^{xL(-1)}\cY_{X_2,X_1}(b_2,e^{\pi i} x)b_1
\end{equation*}
for $b_1\in X_1$, $b_2\in X_2$.

\item For a module $X$ in $\ind(\cC)$, $\Theta_X=e^{2\pi i L(0)}$.
 \end{enumerate}
 In particular, $\ind(\cC)$ is a braided tensor category with twist as described in Section \ref{sec:vertex_tensor}.
\end{thm}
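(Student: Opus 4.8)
The unifying observation is that every structure isomorphism on $\ind(\cC)$ was built in Section \ref{sec:ind_tens_cat} out of the corresponding isomorphism on $\cC$ together with the direct-limit maps $\phi_{W_1,W_2}$, while Theorem \ref{thm:indC_tens_prod_intw_op}(1) pins down $\cY_{X_1,X_2}$ by $\cY_{X_1,X_2}\circ(i_{W_1}\otimes i_{W_2})=\phi_{W_1,W_2}\circ\cY_{W_1,W_2}$. Since every vector of an object $X$ in $\ind(\cC)$ has the form $i_W(w)$ for some $\cC$-submodule $W\subseteq X$ (Proposition \ref{prop:indC_as_unions}), the plan is to pull back each claimed identity to an identity among $\cC$-submodules, where the vertex-algebraic descriptions of the structure isomorphisms of $\cC$ (valid by Assumption \ref{assum:Sec6}(1) and Section \ref{sec:vertex_tensor}) apply directly. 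I would verify (1), (2), (4), (5) by short diagram chases of this type and treat (3) separately as the genuinely hard case.

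For (1), (2), (4), (5): substitute $b_j=i_{W_j}(w_j)$, rewrite $\cY_{X_1,X_2}(b_1,x)b_2=\phi_{W_1,W_2}(\cY_{W_1,W_2}(w_1,x)w_2)$, and apply the abstract definition from Section \ref{sec:ind_tens_cat} of the relevant isomorphism ($F_1\widehat{\tens}F_2$, $\mathfrak{l}_X$/$\mathfrak{r}_X$, $\widehat{\cR}_{X_1,X_2}$, or $\Theta_X$). Each then reduces to the matching $\cC$-level formula: $\cY_{\til{W}_1,\til{W}_2}\circ(f_1\otimes f_2)=(f_1\tens f_2)\circ\cY_{W_1,W_2}$ for morphisms; $l_W(\cY_{V,W}(v,x)w)=Y_W(v,x)w$ and its right-handed analogue for the unit; the braiding relation $\cR_{W_1,W_2}(\cY_{W_1,W_2}(w_1,x)w_2)=e^{xL(-1)}\cY_{W_2,W_1}(w_2,e^{\pi i}x)w_1$; and $\theta_W=e^{2\pi iL(0)}$ together with $\Theta_X\vert_W=\theta_W$ from \eqref{diag:Theta}. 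Here one uses that $i_W$ and the $\phi_{W_1,W_2}$ are $V$-module maps, hence commute with $L(-1)$ and $L(0)$ and preserve gradings. Because $X_1\widehat{\tens}X_2$ is spanned by the images of the $\phi_{W_1,W_2}$ and each $X_j$ is the union of its $\cC$-submodules, these pointwise identities determine the isomorphisms completely; no convergence enters.

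The real work is (3). I would fix $\cC$-submodules $W_j\subseteq X_j$ with $b_j=i_{W_j}(w_j)$, and set $M_1=\im\phi_{W_2,W_3}\subseteq X_2\widehat{\tens}X_3$ and $M_2=\im\phi_{W_1,W_2}\subseteq X_1\widehat{\tens}X_2$, which are $\cC$-submodules (being quotients of $W_2\tens W_3$, $W_1\tens W_2$). Writing $\phi_{W_2,W_3}=i_{W_2,W_3}\circ\phi_{W_2,W_3}'$, the map $\cY^{M_1}:=\phi_{W_2,W_3}'\circ\cY_{W_2,W_3}$ is a surjective intertwining operator of type $\binom{M_1}{W_2\,W_3}$, and symmetrically $\cY^{M_2}:=\phi_{W_1,W_2}'\circ\cY_{W_1,W_2}$ has type $\binom{M_2}{W_1\,W_2}$. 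Using grading-preservation of $\phi_{W_2,W_3}$ to match the projections $\pi_h$, Theorem \ref{thm:indC_tens_prod_intw_op}(1) applied to $W_1\subseteq X_1$, $M_1\subseteq X_2\widehat{\tens}X_3$, and the naturality identity $\cY_{W_1,M_1}\circ(\Id_{W_1}\otimes\phi_{W_2,W_3}')=(\Id_{W_1}\tens\phi_{W_2,W_3}')\circ\cY_{W_1,W_2\tens W_3}$, I would identify the series \eqref{eqn:assoc_prod_series} termwise with the image under $\overline{\phi_{W_1,M_1}\circ(\Id_{W_1}\tens\phi_{W_2,W_3}')}$ of the $\cC$-level product $\cY_{W_1,W_2\tens W_3}(w_1,e^{\ln r_1})\cY_{W_2,W_3}(w_2,e^{\ln r_2})w_3$. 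The latter converges absolutely by Assumption \ref{assum:Sec6}(1), and since the map on algebraic completions is grading-preserving, \eqref{eqn:assoc_prod_series} converges absolutely to that image; the iterate \eqref{eqn:assoc_it_series} is handled symmetrically through $M_2$ and $\overline{\phi_{M_2,W_3}\circ(\phi_{W_1,W_2}'\tens\Id_{W_3})}$.

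For the identity \eqref{eqn:IndC_vrtx_assoc} itself, the key is the \emph{localized compatibility relation}
\begin{equation*}
 \fA_{X_1,X_2,X_3}\circ\phi_{W_1,M_1}\circ(\Id_{W_1}\tens\phi_{W_2,W_3}')=\phi_{M_2,W_3}\circ(\phi_{W_1,W_2}'\tens\Id_{W_3})\circ\cA_{W_1,W_2,W_3},
\end{equation*}
where $\fA_{X_1,X_2,X_3}$ is the associativity isomorphism of $\ind(\cC)$ (the map written $\cA_{X_1,X_2,X_3}$ in \eqref{eqn:IndC_vrtx_assoc}). This relation follows by unwinding $\fA=T_{(X_1,X_2),X_3}\circ\widehat{\cA}\circ T_{X_1,(X_2,X_3)}^{-1}$ through the defining characterizations of $T_{X_1,(X_2,X_3)}$, $\widehat{\cA}$, and $T_{(X_1,X_2),X_3}$ from Section \ref{sec:ind_tens_cat}. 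Applying the completion of this relation to the $\cC$-level product, invoking the $\cC$-level associativity identity of Section \ref{sec:vertex_tensor} to rewrite $\overline{\cA_{W_1,W_2,W_3}}(\text{product})$ as the corresponding iterate, and comparing with the description of \eqref{eqn:assoc_it_series} found above yields \eqref{eqn:IndC_vrtx_assoc}. \textbf{The main obstacle} is exactly this reduction of associativity: one must track the two auxiliary $\cC$-submodules $M_1,M_2$, invoke right exactness (Assumption \ref{assum:Sec5}(4)) to guarantee surjectivity of $\Id_{W_1}\tens\phi_{W_2,W_3}'$, and confirm that the convergent products, iterates, and associativity isomorphisms really do transport through the maps $\overline{\phi}$ on algebraic completions. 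Finally, having matched every structure isomorphism to its vertex-algebraic form, the concluding assertion follows by combining this with the braided-tensor-category-with-twist structure already proved in Section \ref{sec:ind_tens_cat}, taking the $P(z)$-tensor products to be $X_1\widehat{\tens}X_2$ equipped with the $P(z)$-intertwining map $\cY_{X_1,X_2}(\cdot,z)\cdot$, whose universal property is Theorem \ref{thm:indC_tens_prod_intw_op}(2).
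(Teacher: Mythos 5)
Your proposal is correct and follows essentially the same route as the paper's proof: parts (1), (2), (4), (5) by writing $b_j=i_{W_j}(w_j)$ and reducing via the defining diagrams of Section \ref{sec:ind_tens_cat} to the $\cC$-level formulas, and part (3) by transporting the $\cC$-level product and iterate through $\overline{\phi_{W_1,\im\phi_{W_2,W_3}}\circ(\Id_{W_1}\tens\phi_{W_2,W_3}')}$ and $\overline{\phi_{\im\phi_{W_1,W_2},W_3}\circ(\phi_{W_1,W_2}'\tens\Id_{W_3})}$ and using exactly your localized compatibility relation $\fA_{X_1,X_2,X_3}\circ\phi_{W_1,\im\phi_{W_2,W_3}}\circ(\Id_{W_1}\tens\phi_{W_2,W_3}')=\phi_{\im\phi_{W_1,W_2},W_3}\circ(\phi_{W_1,W_2}'\tens\Id_{W_3})\circ\cA_{W_1,W_2,W_3}$, obtained by unwinding $\fA$ through the $T$ maps. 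The only inessential difference is your appeal to right exactness for surjectivity of $\Id_{W_1}\tens\phi_{W_2,W_3}'$, which the paper does not need at this step (it enters only through Proposition \ref{prop:Fubini}, which makes $\fA$ well defined).
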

\begin{proof}
For morphisms $F_1: X_1\rightarrow\til{X}_1$ and $F_2: X_2\rightarrow\til{X}_2$ in $\ind(\cC)$, we may assume $b_1=i_{W_1}(w_1)$ for some $W_1\in I_{W_1}$, $w_1\in W_1$ and $b_2=i_{W_2}(w_2)$ for some $W_2\in I_{W_2}$, $w_2\in W_2$. Then by the definitions,
\begin{align*}
 (F_1\widehat{\tens}F_2)\left(\cY_{X_1,X_2}(b_1,x)b_2\right) & = (F_1\widehat{\tens}F_2)\left(\cY_{X_1,X_2}(i_{W_1}(w_1),x)i_{W_2}(w_2)\right)\nonumber\\
 & = \left((F_1\widehat{\tens}F_2)\circ\phi_{W_1,W_2}\right)\left(\cY_{W_1,W_2}(w_1,x)w_2\right)\nonumber\\
 & =\left(\phi_{F_1(W_1),F_2(W_2)}\circ(F_1\vert_{W_1}\tens F_2\vert_{W_2})\right)\left(\cY_{W_1,W_2}(w_1,x)w_2\right)\nonumber\\
 & =\phi_{F_1(W_1),F_2(W_2)}\left(\cY_{F_1(W_1),F_2(W_2)}(F_1(i_{W_1}(w_1)),x)F_2(i_{W_2}(w_2))\right)\nonumber\\
 & =\cY_{\til{X}_1,\til{X}_2}(F_1(b_1),x)F_2(b_2)
\end{align*}
as required.

 For the unit isomorphisms $\mathfrak{l}_X$ and $\mathfrak{r}_X$, we may assume that $b\in X$ is given by $b=i_W(w)$ for some $W\in I_X$ and $w\in W$. Then
 \begin{align*}
  \mathfrak{l}_X\left(\cY_{V,X}(v,x)b\right) & = \mathfrak{l}_X\left(\cY_{V,X}(i_V(v),x)i_W(w)\right)\nonumber\\
  & = (\mathfrak{l}_X\circ\phi_{V,W})\left(\cY_{V,W}(v,x)w\right)\nonumber\\
  & =(i_W\circ l_{W})\left(\cY_{V,W}(v,x)w\right)\nonumber\\
  & = i_W\left(Y_W(v,x)w\right)\nonumber\\
  & =Y_X(v,x)i_W(w) = Y_X(v,x)b
 \end{align*}
and
\begin{align*}
  \mathfrak{r}_X\left(\cY_{X,V}(b,x)v\right) & = \mathfrak{r}_X\left(\cY_{X,V}(i_W(w),x)i_V(v)\right)\nonumber\\
  & = (\mathfrak{r}_X\circ\phi_{W,V})\left(\cY_{W,V}(w,x)v\right)\nonumber\\
  & =(i_W\circ r_{W})\left(\cY_{W,V}(w,x)v\right)\nonumber\\
  & = i_W\left(e^{xL(-1)}Y_W(v,-x)w\right)\nonumber\\
  & =e^{xL(-1)}Y_X(v,-x)i_W(w) = e^{xL(-1)}Y_X(v,-x)b
 \end{align*}
 by the definition of the unit isomorphisms in $\cC$.

 For the associativity isomorphism $\cA_{X_1,X_2,X_3}$, take $b_1\in X_1$, $b_2\in X_2$, and $b_3\in X_3$. For $j=1,2,3$, we may assume that $b_j=i_{W_j}(w_j)$ for some $W_j\in I_{X_j}$ and $w_j\in W_j$. Then for $b'\in(X_1\widehat{\tens}(X_2\widehat{\tens}X_3))'$ and $h\in\CC$, we use the definitions to get
 \begin{align*}
  &\left\langle b', \cY_{X_1,X_2\widehat{\tens} X_3}(b_1,e^{\ln r_1})\pi_h\left(\cY_{X_2,X_3}(b_2,e^{\ln r_2})b_3\right)\right\rangle\nonumber\\
  &\hspace{1em} =\left\langle b', \cY_{X_1,X_2\widehat{\tens} X_3}(i_{W_1}(w_1),e^{\ln r_1})\pi_h\left(\cY_{X_2,X_3}(i_{W_2}(w_2),e^{\ln r_2})i_{W_3}(w_3)\right)\right\rangle\nonumber\\
  &\hspace{1em} = \left\langle b',   \cY_{X_1,X_2\widehat{\tens} X_3}(i_{W_1}(w_1),e^{\ln r_1})(\pi_h\circ\overline{\phi_{W_2,W_3}})\left(\cY_{W_2,W_3}(w_2,e^{\ln r_2})w_3\right)\right\rangle\nonumber\\
  &\hspace{1em} =\left\langle b', \cY_{X_1,X_2\widehat{\tens} X_3}(i_{W_1}(w_1),e^{\ln r_1})(\phi_{W_2,W_3}\circ\pi_h)\left(\cY_{W_2,W_3}(w_2,e^{\ln r_2})w_3\right)\right\rangle\nonumber\\
  &\hspace{1em} = \left\langle b', \overline{\phi_{W_1,\im\phi_{W_2,W_3}}}\left(\cY_{W_1,\im\phi_{W_2,W_3}}(w_1,e^{\ln r_1})(\phi_{W_2,W_3}'\circ\pi_h)\left(\cY_{W_2,W_3}(w_2,e^{\ln r_2})w_3\right)\right)\right\rangle\nonumber\\
  &\hspace{1em} =\left\langle b',\overline{\phi_{W_1,\im\phi_{W_2,W_3}}\circ(\Id_{W_1}\tens\phi_{W_2,W_3}')}\left(\cY_{W_1,W_2\tens W_3}(w_1,e^{\ln r_1})\pi_h\left(\cY_{W_2,W_3}(w_2,e^{\ln r_2})w_3\right)\right)\right\rangle\nonumber\\
  & \hspace{1em} = \left\langle (\phi_{W_1,\im\phi_{W_2,W_3}}\circ(\Id_{W_1}\tens\phi_{W_2,W_3}'))^*(b'), \cY_{W_1,W_2\tens W_3}(w_1,e^{\ln r_1})\pi_h\left(\cY_{W_2,W_3}(w_2,e^{\ln r_2})w_3\right)\right\rangle
 \end{align*}
Now the convergence of products of intertwining operators in $\cC$ implies that the sum over $h\in\CC$ converges absolutely to some
\begin{equation*}
 \cY_{X_1,X_2\widehat{\tens} X_3}(b_1,e^{\ln r_1})\cY_{X_2,X_3}(b_2,e^{\ln r_2})b_3\in((X_1\widehat{\tens}(X_2\widehat{\tens}X_3))')^*.
\end{equation*}
Moreover,
\begin{align*}
  \cY_{X_1,X_2\widehat{\tens} X_3} & (b_1,e^{\ln r_1})  \cY_{X_2,X_3}(b_2,e^{\ln r_2})b_3\nonumber\\ &=\left\langle(\phi_{W_1,\im\phi_{W_2,W_3}}\circ(\Id_{W_1}\tens\phi_{W_2,W_3}'))^*(\cdot), \cY_{W_1,W_2\tens W_3}(w_1,e^{\ln r_1})\cY_{W_2,W_3}(w_2,e^{\ln r_2})w_3\right\rangle\nonumber\\
  & =\overline{\phi_{W_1,\im\phi_{W_2,W_3}}\circ(\Id_{W_1}\tens\phi_{W_2,W_3}')}\left(\cY_{W_1,W_2\tens W_3}(w_1,e^{\ln r_1})\cY_{W_2,W_3}(w_2,e^{\ln r_2})w_3\right)\nonumber\\
&  \in\im\overline{\phi_{W_1,\im\phi_{W_2,W_3}}\circ(\Id_{W_1}\tens\phi_{W_2,W_3}')}\subseteq\overline{X_1\widehat{\tens}(X_2\widehat{\tens}X_3)}.
\end{align*}
Similarly, the series in \eqref{eqn:assoc_it_series} converges absolutely to
\begin{align*}
 \cY_{X_1\widehat{\tens}X_2,X_3} & \left(\cY_{X_1,X_2}(b_1,e^{\ln(r_1-r_2)})b_2,e^{\ln r_2}\right)b_3\nonumber\\
 & =\overline{\phi_{\im\phi_{W_1,W_2},W_3}\circ(\phi_{W_1,W_2}'\tens\Id_{W_3})}\left(\cY_{W_1\tens W_2,W_3}\left(\cY_{W_1,W_2}(w_1,e^{\ln(r_1-r_2)})w_2,e^{\ln r_2}\right)w_3\right)\nonumber\\
 &\in\overline{(X_1\widehat{\tens}X_2)\widehat{\tens}X_3}.
\end{align*}
Then since
\begin{align*}
 \fA_{X_1,X_2,X_3}\circ\phi_{W_1,\im\phi_{W_2,W_3}}\circ(\Id_{W_1}\tens\phi_{W_2,W_3}') &=\fA_{X_1,X_2,X_3}\circ T_{X_1,(X_2,X_3)}\circ\phi_{W_1,W_2,W_3}\nonumber\\
 & =T_{(X_1,X_2),X_3}\circ\widehat{\cA}_{X_1,X_2,X_3}\circ\phi_{W_1,W_2,W_3}\nonumber\\
 & =T_{(X_1,X_2),X_3}\circ\phi_{W_1,W_2,W_3}\circ\cA_{W_1,W_2,W_3}\nonumber\\
 & =\phi_{\im\phi_{W_1,W_2},W_3}\circ(\phi_{W_1,W_2}'\tens\Id_{W_3})\circ\cA_{W_1,W_2,W_3},
\end{align*}
\eqref{eqn:IndC_vrtx_assoc} now follows from the definition of the associativity isomorphisms in $\cC$.

For the braiding isomorphism $\widehat{\cR}_{X_1,X_2}$, take $b_1\in X_1$, $b_2\in X_2$ such that for $j=1,2$, $b_j=i_{W_j}(w_j)$ for some $W_j\in I_{X_j}$ and $w_j\in W_j$. Then
\begin{align*}
 \widehat{\cR}_{X_1,X_2}\left(\cY_{X_1,X_2}(b_1,x)b_2\right) & =\widehat{\cR}_{X_1,X_2}\left(\cY_{X_1,X_2}(i_{W_1}(w_1),x)i_{W_2}(w_2)\right)\nonumber\\
 & =(\widehat{\cR}_{X_1,X_2}\circ\phi_{W_1,W_2})\left(\cY_{W_1,W_2}(w_1,x)w_2\right)\nonumber\\
 & =(\phi_{W_2,W_1}\circ\cR_{W_1,W_2})\left(\cY_{W_1,W_2}(w_1,x)w_2\right)\nonumber\\
 & =\phi_{W_2,W_1}\left(e^{xL(-1)}\cY_{W_2,W_1}(w_2,e^{\pi i} x) w_1\right)\nonumber\\
 & = e^{xL(-1)}\cY_{X_2,X_1}(i_{W_2}(w_2),e^{\pi i}x)i_{W_1}(w_1) = e^{xL(-1)}\cY_{X_2,X_1}(b_2,e^{\pi i}x)b_1
\end{align*}
by the definition of the braiding isomorphisms in $\cC$.

Finally, for an object $X$ in $\ind(\cC)$ and $b=i_W(w)\in X$,
\begin{equation*}
 \Theta_X(b)=\Theta_X(i_W(w))=i_W(\theta_W(w))=i_W(e^{2\pi i L(0)}w)=e^{2\pi i L(0)}i_W(w) =e^{2\pi i L(0)}b
\end{equation*}
by \eqref{diag:Theta} and the fact that $i_W$ is a homomorphism of generalized $V$-modules.
\end{proof}

The previous theorem completes the proof of Theorem \ref{thm:main_thm}, except for the assertion in Theorem \ref{thm:main_thm} that $\ind(\cC)$ has the $P(z)$-vertex tensor category structure of \cite{HLZ1}-\cite{HLZ8} extending that on $\cC$. The existence of $P(z)$-vertex tensor category structure on $\ind(\cC)$ can be proved exactly as in \cite[Section 3.5]{CKM-ext}. In particular, for generalized modules $X_1$, $X_2$ in $\ind(\cC)$ and $z\in\CC^\times$, we can take the $P(z)$-tensor product $X_1\widehat{\tens}_{P(z)} X_2$ to be  $X_1\widehat{\boxtimes}X_2$ equipped with the $P(z)$-intertwining map $\cY_{X_1,X_2}(\cdot,e^{\log z})\cdot$ for some choice of branch $\log z$ of logarithm. Although the module category considered in \cite{CKM-ext} is different (it is the local module category $\rep^0 A$ of a vertex operator superalgebra extension of a vertex operator subalgebra $V$), the tensor product and all structure isomorphisms in the category of \cite{CKM-ext} are characterized by intertwining operators exactly as in Theorem \ref{thm:indC_vrtx_tens}. So the construction of $P(z)$-tensor products and parallel transport, unit, associativity, and braiding isomorphisms of the $P(z)$-vertex tensor category structure works the same for $\ind(\cC)$ as it does for $\rep^0 A$.

Finally, the assertion that the $P(z)$-vertex tensor category structure on $\ind(\cC)$ extends that on $\cC$ amounts to the assertion that the embedding $\cC\hookrightarrow\ind(\cC)$ is a ``vertex tensor functor'' in the sense of \cite[Section 3.6]{CKM-ext}. This is proved in exactly the same way as \cite[Theorem 3.68]{CKM-ext} (where the functor under consideration is the induction functor into the local module category $\rep^0 A$ of a vertex operator (super)algebra extension); see also \cite[Remark 2.7]{McR1}.

\section{Examples and applications}\label{sec:exam}

In this section, we give examples of vertex operator algebra module categories that satisfy the conditions of Theorem \ref{thm:main_thm}: the basic examples are $C_1$-cofinite module categories for Virasoro and affine vertex operator algebras. We then apply extension theory \cite{CKM-ext} to demonstrate the existence of braided tensor categories of generalized modules for certain infinite-order extensions of Virasoro and affine vertex operator algebras, such as singlet vertex operator algebras.

For a vertex operator algebra $V$, the category $\cC^1_V$ of $C_1$-cofinite grading-restricted generalized $V$-modules will satisfy   the conditions of Theorem \ref{thm:main_thm}, provided it satisfies the hypothesis of Theorem \ref{thm:C1_vrtx_tens}:
\begin{thm}\label{thm:ind_C_1_V}
 Suppose the category $\cC^1_V$ of $C_1$-cofinite grading-restricted generalized modules for a vertex operator algebra $V$ is closed under contragredient modules. Then $\cC^1_V$ satisfies the conditions of Theorem \ref{thm:main_thm}, so that $\ind(\cC^1_V)$ admits vertex and braided tensor category structures extending those on $\cC^1_V$.
\end{thm}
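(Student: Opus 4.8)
The plan is to verify directly that $\cC^1_V$ satisfies each of the five numbered hypotheses of Theorem \ref{thm:main_thm}. Three of them hold with no reference to contragredients and follow immediately from results already established above, while closure under submodules and the existence of tensor category structure are exactly where the hypothesis that $\cC^1_V$ is closed under contragredients enters.

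First I would dispose of the conditions that hold unconditionally. For condition (1), I claim $V$ itself is always an object of $\cC^1_V$: it is grading restricted by the vertex operator algebra axioms, and it is $C_1$-cofinite because the creation property $v_{-1}\vac = v$ shows that $\bigoplus_{n \geq 1} V_{(n)} \subseteq C_1(V)$, so that $V/C_1(V)$ is a quotient of the finite-dimensional space $\bigoplus_{n \leq 0} V_{(n)}$. Condition (3), that every module is finitely generated, is Proposition \ref{prop:C1-fin-gen} together with Remark \ref{rem:N-gradable}, which makes any grading-restricted generalized module $\NN$-gradable. Condition (5) is a direct consequence of Corollary \ref{cor:ImY_C1}: any $X$ in $\ind(\cC^1_V)$ is in particular a generalized $V$-module by Proposition \ref{prop:indC_as_unions}, so for $W_1, W_2$ in $\cC^1_V$ and $\cY$ of type $\binom{X}{W_1\,W_2}$, the corollary already gives $\im\cY \in \cC^1_V$, indeed in a form not even requiring $X \in \ind(\cC^1_V)$.

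Next I would turn to condition (2). Closure under finite direct sums is clear since $C_1(W_1 \oplus W_2) = C_1(W_1)\oplus C_1(W_2)$ and grading restriction is preserved. Closure under quotients is also immediate: a surjection $W \to W''$ carries $C_1(W)$ onto $C_1(W'')$, so $W''/C_1(W'')$ is a quotient of $W/C_1(W)$, and grading restriction passes to quotients. The remaining and genuinely nontrivial part is closure under submodules, where I would use the contragredient hypothesis via a duality argument. Given a submodule $\til{W} \subseteq W$ with $W \in \cC^1_V$, first note $\til{W}$ is grading restricted, since its graded pieces sit inside those of $W$. By hypothesis $W' \in \cC^1_V$; the inclusion dualizes to a surjection $W' \to \til{W}'$, so by the quotient case $\til{W}' \in \cC^1_V$; applying the contragredient hypothesis once more and using $\til{W}'' \cong \til{W}$ for grading-restricted modules yields $\til{W} \in \cC^1_V$. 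Finally, condition (4), that $\cC^1_V$ carries $P(z)$-vertex and braided tensor category structure, is precisely Theorem \ref{thm:C1_vrtx_tens}, whose hypothesis is exactly closure under contragredients.

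The only conceptual obstacle is the closure under submodules, since $C_1$-cofiniteness is not obviously inherited by submodules; the point is that the contragredient hypothesis lets us trade a submodule for a quotient of a contragredient, where $C_1$-cofiniteness is transparent. Once all five conditions are in hand, Theorem \ref{thm:main_thm} applies verbatim and gives the desired vertex and braided tensor category structures on $\ind(\cC^1_V)$ extending those on $\cC^1_V$.
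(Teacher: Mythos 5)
Your proposal is correct and follows essentially the same route as the paper's proof: conditions (1), (3), (4), (5) are verified by the identical references (the creation property, Proposition \ref{prop:C1-fin-gen}, Theorem \ref{thm:C1_vrtx_tens}, and Corollary \ref{cor:ImY_C1}), and your submodule argument --- dualizing the inclusion to a surjection $W'\to\til{W}'$, invoking the quotient case, and then using $\til{W}''\cong\til{W}$ --- is exactly the paper's observation that a submodule is the contragredient of a quotient of the contragredient. The only difference is that you spell out this duality step in slightly more detail than the paper does.
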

\begin{proof}
 We verify the conditions of Theorem \ref{thm:main_thm} one by one:
 \begin{enumerate}
  \item $V$ is an object of $\cC^1_V$: Since $v_{-1}\vac=v$ for any $v\in V$, $C_1(V)$ contains $\bigoplus_{n\geq 1} V_{(n)}$. Then the grading-restriction conditions on $V$ imply that $\dim V/C_1(V)<\infty$.

  \item $\cC^1_V$ is closed under submodules, quotients, and finite direct sums: It is easy to see that quotients and finite direct sums of $C_1$-cofinite modules are $C_1$-cofinite. For submodules, we use the fact that a submodule of a $V$-module $W$ is the contragredient of a quotient of $W'$, together with the assumption that contragredients of $C_1$-cofinite modules are $C_1$-cofinite.

  \item Every module in $\cC^1_V$ is finitely generated by Proposition \ref{prop:C1-fin-gen}.

  \item The category $\cC^1_V$ admits $P(z)$-vertex and braided tensor category structures as described in Section \ref{sec:vertex_tensor} by Theorem \ref{thm:C1_vrtx_tens}.

  \item The intertwining operator condition of Theorem \ref{thm:main_thm} follows from Corollary \ref{cor:ImY_C1}.
 \end{enumerate}
\end{proof}

Perhaps the first candidates for vertex operator algebras which satisfy the conditions of Theorem \ref{thm:ind_C_1_V} are simple Virasoro and affine vertex operator algebras:
\begin{exam}\label{exam:Vir_tens_cat}
 For any central charge $c\in\CC$, it is shown in \cite{CJORY} that the category $\cC^1_c$ of $C_1$-cofinite modules for the simple Virasoro vertex operator algebra $L(c,0)$ is equal to the category of finite-length $L(c,0)$-modules that have $C_1$-cofinite composition factors. In particular, $\cC_c^1$ is closed under contragredients, so Theorem \ref{thm:ind_C_1_V} implies that $\ind(\cC^1_c)$ has vertex and braided tensor category structures extending those on $\cC_c^1$.
\end{exam}

\begin{exam}
Now consider the simple vertex operator (super)algebra $V_k(\mathfrak{g})$ associated to a finite-dimensional Lie (super)algebra $\mathfrak{g}$ at non-critical level $k$. The category of $C_1$-cofinite grading-restricted generalized modules is called the Kazhdan-Lusztig category $KL_k(\mathfrak{g})$; it is not hard to show that it is also the category of finitely-generated grading-restricted generalized $V_k(\mathfrak{g})$-modules. (Proposition \ref{prop:C1-fin-gen} shows that $C_1$-cofinite modules are finitely generated. On the other hand, the grading restriction conditions imply that a finitely-generated grading-restricted generalized module is also finitely generated as a $U(\widehat{\mathfrak{g}}_-)$-module, which implies $C_1$-cofiniteness.) For $\mathfrak{g}$ a simple Lie algebra, it is known that $KL_k(\mathfrak{g})$ is closed under contragredients when $k\in\CC\setminus\mathbb{Q}$ (in which case $KL_k(\mathfrak{g})$ is semisimple), when $k+h^\vee\in\mathbb{Q}_{<0}$ \cite{KL2}, when $k$ is admissible \cite{Ar}, and for certain non-admissible $k$ such that $k+h^\vee\in\mathbb{Q}_{>0}$ \cite{CY}. For $\mathfrak{h}$ an abelian Lie algebra, $V_k(\mathfrak{h})$ for $k\neq 0$ is a Heisenberg vertex operator algebra; modules in $KL_k(\mathfrak{h})$ are finite-length extensions of irreducible Fock modules and thus $KL_k(\mathfrak{h})$ is closed under contragredients. In all these cases, Theorem \ref{thm:ind_C_1_V} shows that $\ind(KL_k(\mathfrak{g}))$ has vertex and braided tensor category structures extending those on $KL_k(\mathfrak{g})$.
\end{exam}

Now the reason we want vertex and braided tensor category structures on categories such as $\ind(\cC^1_c)$ and $\ind(KL_k(\mathfrak{g}))$ is that we want to apply the extension theory of \cite{CKM-ext} to infinite-order extensions of vertex operator algebras. An extension of a vertex operator algebra $V$ is a vertex operator algebra $A$ that contains $V$ as a vertex operator subalgebra. In particular, $V$ and $A$ have the same conformal vector, so that $V$ conformally embeds into $A$. For example, any vertex operator algebra $V$ of central charge $c$ contains a Virasoro vertex operator subalgebra. Although $V$ will not usually be $C_1$-cofinite as a Virasoro module, $V$ might be an object of $\ind(\cC^1_c)$. In this case, we would like to view $V$ as a commutative algebra in the braided tensor category $\ind(\cC^1_c)$ (as in \cite{HKL}) and then apply \cite{CKM-ext} to obtain vertex and braided tensor category structures on the category of generalized $V$-modules in $\ind(\cC^1_c)$.

We recall the definition of commutative algebra in a braided tensor category:
\begin{defi}
 Let $(\cC,\tens,\vac,l,r,\cA,\cR)$ be a braided tensor category. A \textit{commutative algebra} in $\cC$ is an object $A$ equipped with a multiplication morphism $\mu_A: A\tens A\rightarrow A$ and a unit morphism $\iota_A:\vac\rightarrow A$ that satisfy the following properties:
 \begin{enumerate}
  \item Unit: $\mu_A\circ(\iota_A\tens\Id_A)=l_A$
  \item Associativity: $\mu_A\circ(\Id_A\tens\mu_A)=\mu_A\circ(\mu_A\tens\Id_A)\circ\cA_{A,A,A}$
  \item Commutativity: $\mu_A=\mu_A\circ\cR_{A,A}$.
 \end{enumerate}
\end{defi}

The following result is a version of Theorem 3.2 (and Remark 3.3) of \cite{HKL} that is relevant for our setting, although the statement differs somewhat from \cite[Theorem 3.2]{HKL} since here we choose to avoid the implicit assumption in \cite{HKL} that the unit $\iota_A$ is injective. In the statement, $Y$ represents the vertex operators for the vertex operator algebras $V$ and $A$, while $Y_A: V\otimes A\rightarrow A((x))$ is the vertex operator for $A$ as a $V$-module.
\begin{thm}\label{thm:ext_alg}
 Let $(V,Y,\vac,\omega)$ be a vertex operator algebra and $\cC$ a category of grading-restricted generalized $V$-modules that satisfies the conditions of Theorem \ref{thm:main_thm}. Then the following two categories are isomorphic:
 \begin{enumerate}
  \item Vertex operator algebras $(A,Y,\vac_A,\omega_A)$ such that:
  \begin{itemize}
   \item $A$ is a $V$-module in $\ind(\cC)$,
   \item $Y_A(v,x)=Y(v_{-1}\vac_A,x)$ for $v\in V$, and
   \item $\omega_A=L(-2)\vac_A \quad (\,=\omega_{-1}\vac_A\,)$.
  \end{itemize}
\item Commutative algebras $(A,\mu_A,\iota_A)$ in the braided tensor category $\ind(\cC)$ such that $A$ is $\ZZ$-graded by $L(0)$-eigenvalues and satisfies the grading restriction conditions.
 \end{enumerate}
\end{thm}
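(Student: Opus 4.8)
The plan is to deduce this theorem from the intertwining-operator description of the tensor structure on $\ind(\cC)$ established in Theorem \ref{thm:indC_vrtx_tens}, so that the argument of \cite[Theorem 3.2 and Remark 3.3]{HKL} applies with only cosmetic changes. The point is that, although $\ind(\cC)$ need not be grading restricted, every algebra $A$ appearing in either category of the statement \emph{is} grading restricted, and all the analytic input of \cite{HKL}---convergence and associativity of products and iterates of intertwining operators, and the existence of graded duals $X'$ for the iterated tensor products used to characterize $\cA$---is available for $\ind(\cC)$ by Theorem \ref{thm:indC_vrtx_tens}(3). Throughout, the bridge between the two pictures is the universal property of $(X_1\widehat{\tens}X_2,\cY_{X_1,X_2})$ from Theorem \ref{thm:indC_tens_prod_intw_op}(2), which identifies morphisms $A\widehat{\tens}A\to A$ in $\ind(\cC)$ with intertwining operators of type $\binom{A}{A\,A}$.

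For the functor from (1) to (2): given a vertex operator algebra $A$ in $\ind(\cC)$ of the indicated form, the vertex operator $Y_A(\cdot,x)\colon A\otimes A\to A((x))$ is an intertwining operator of type $\binom{A}{A\,A}$, since the Jacobi identity for $A$ together with $Y_A(v,x)=Y(v_{-1}\vac_A,x)$ for $v\in V$ specializes to the intertwining-operator Jacobi identity over $V$. By Theorem \ref{thm:indC_tens_prod_intw_op}(2) there is a unique $\mu_A\colon A\widehat{\tens}A\to A$ with $\mu_A\circ\cY_{A,A}=Y_A$, and we let $\iota_A\colon V\to A$ be the $V$-module map $v\mapsto v_{-1}\vac_A$. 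The three algebra axioms are then verified by unwinding the characterizations in Theorem \ref{thm:indC_vrtx_tens}: the unit axiom follows from the module-vertex-operator condition $Y_A(v,x)=Y(v_{-1}\vac_A,x)$ and the formula for $\mathfrak{l}_A$ in Theorem \ref{thm:indC_vrtx_tens}(2), together with $Y_A(\vac_A,x)=\Id_A$; commutativity follows from the skew-symmetry $Y_A(a,x)b=e^{xL(-1)}Y_A(b,-x)a$ matched against the braiding formula in Theorem \ref{thm:indC_vrtx_tens}(4); and associativity follows from the associativity of products of $Y_A$ (a consequence of the Jacobi identity for $A$) together with the associativity relation \eqref{eqn:IndC_vrtx_assoc}.

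Conversely, given a commutative algebra $(A,\mu_A,\iota_A)$ as in (2), set $\cY_A:=\mu_A\circ\cY_{A,A}$, an intertwining operator of type $\binom{A}{A\,A}$, and define $Y_A(a,x)b:=\cY_A(a,x)b$, $\vac_A:=\iota_A(\vac)$, and $\omega_A:=L(-2)\vac_A$. Lower truncation and the $L(-1)$-derivative property are inherited from $\cY_A$; the vacuum property and the identity $Y_A(v,x)=Y(v_{-1}\vac_A,x)$ for $v\in V$ come from the unit axiom $\mu_A\circ(\iota_A\widehat{\tens}\Id_A)=\mathfrak{l}_A$ (which also forces $\iota_A$ to be $V$-linear); and the imposed $\ZZ$-grading and grading restrictions make $A$ a genuine conformal vertex algebra. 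The one substantive point is the Jacobi identity for $Y_A$, which we obtain by the standard duality argument: commutativity makes $\cY_A$ skew-symmetric, associativity makes the products and iterates of $\cY_A$ agree as analytic functions via \eqref{eqn:IndC_vrtx_assoc}, and the convergence guaranteeing these equalities is precisely Theorem \ref{thm:indC_vrtx_tens}(3); together, skew-symmetry and duality for the intertwining operator yield the Jacobi identity.

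Finally, the two constructions are mutually inverse because each reduces to the universal-property identification $\mu_A\leftrightarrow Y_A$, and a morphism of the first category (a vertex operator algebra homomorphism restricting to $\Id_V$) corresponds to a morphism of the second (an algebra homomorphism in $\ind(\cC)$) by the same intertwining-operator characterization together with the naturality of $\cY_{\bullet,\bullet}$; this gives the claimed isomorphism of categories. I expect the main obstacle to be not any single computation but the careful bookkeeping needed to confirm that the grading-restriction hypotheses in \cite{HKL} are inessential: one must check that every step survives when the ambient category fails to be grading restricted, relying on the facts that $A$ itself is grading restricted (so that $\vac_A,\omega_A$ generate the correct $L(0)$-grading and that $A'$ exists) and that the convergence and associativity of products and iterates needed in the duality argument hold in $\ind(\cC)$ by Theorem \ref{thm:indC_vrtx_tens}.
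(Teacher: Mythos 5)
Your proposal is correct and takes essentially the same approach as the paper: both directions transplant the argument of \cite[Theorem 3.2]{HKL} into $\ind(\cC)$, using the universal property of Theorem \ref{thm:indC_tens_prod_intw_op}(2) to pass between $\mu_A$ and the vertex operator of $A$ (with $\iota_A(v)=v_{-1}\vac_A$ a $V$-module map via the vacuum-like-vector criterion of \cite[Proposition 4.7.7]{LL}), and the intertwining-operator characterizations of Theorem \ref{thm:indC_vrtx_tens} to check the unit, commutativity, and associativity axioms and to run the duality argument in the reverse direction. The paper's proof is precisely this sketch, likewise deferring the detailed computations to \cite{HKL} after observing that the grading-restriction hypotheses there are satisfied by $A$ itself and are not needed for the ambient category.
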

\begin{proof}
 The proof is essentially the same as that of \cite[Theorem 3.2]{HKL}, so we will just briefly indicate how the isomorphism of categories goes. Given a vertex operator algebra $A$ as in the statement of the theorem, the grading of $A$ by $L(0)$-eigenvalues as a $V$-module must agree with the $\ZZ$-grading of $A$ as a vertex operator algebra since
 \begin{equation*}
  Y_A(\omega,x) =Y(L(-2)\vac_A, x)=Y(\omega_A, x).
 \end{equation*}
In particular, $A$ is $\ZZ$-graded by $L(0)$-eigenvalues and satisfies the grading restriction conditions. The algebra unit $\iota_A: V\rightarrow A$ is defined by $\iota_A(v)=v_{-1}\vac_A$. This is a $V$-module homomorphism by \cite[Proposition 4.7.7]{LL}, since $\vac_A$ is a vacuum-like vector for $V$:
 \begin{equation*}
  Y_A(v,x)\vac_A=Y(v_{-1}\vac_A,x)\vac_A\in A[[x]]
 \end{equation*}
for $v\in V$. (In fact, $\iota_A$ is also a homomorphism of vertex operator algebras.) The algebra multiplication $\mu_A: A\widehat{\tens}A\rightarrow A$ is the unique $V$-module homomorphism, guaranteed by Theorem \ref{thm:indC_tens_prod_intw_op}(2), such that $\mu_A\circ\cY_{A,A}=Y$; note that $Y$ is indeed a $V$-module intertwining operator of type $\binom{A}{A\,A}$ since $Y_A=Y\circ(\iota_A\otimes\Id_A)$ and $L(-1)$ agrees with $L_A(-1)=(\omega_A)_{-1}$ on $A$. Using Theorem \ref{thm:indC_vrtx_tens}, $\mu_A$ is commutative and associative exactly as in \cite[Theorem 3.2]{HKL}, and the unit axiom follows from
\begin{equation*}
(\mu_A\circ(\iota_A\widehat{\tens}\Id_A))\left(\cY_{V,A}(v,x)a\right) =Y(v_{-1}\vac_A,x)a=Y_A(v,x)a=\mathfrak{l}_A\left(\cY_{V,A}(v,x)a\right)
\end{equation*}
for $v\in V$, $a\in A$.

On the other hand, if $(A,\mu_A,\iota_A)$ is a commutative associative algebra in $\ind(\cC)$ with a restricted $\ZZ$-grading by $L(0)$-eigenvalues, then $A$ is a vertex operator algebra with vertex operator $Y=\mu_A\circ\cY_{A,A}$, vacuum $\vac_A=\iota_A(\vac)$, and conformal vector $\omega_A=\iota_A(\omega)$, just as in \cite[Theorem 3.2]{HKL}, .
\end{proof}

\begin{rem}
The notion of commutative algebra generalizes to superalgebra, that is, an object graded by parity for which the multiplication is supercommutative (see \cite[Definition 2.14]{CKL}). Commutative superalgebras correspond precisely to vertex superalgebra extensions \cite[Theorem 3.13]{CKL}, and similar to the above theorem, the argument also works in the direct limit completion.
\end{rem}

Theorem \ref{thm:ext_alg} means that we can use the methods of braided tensor categories to study suitable (possibly infinite-order) extensions of a vertex operator algebra $V$. In fact, we can apply the extension theory developed in \cite{CKM-ext} with the braided tensor category $\cC$ replaced by $\ind(\cC)$: although it was assumed in \cite{CKM-ext} that the generalized $V$-modules under consideration were grading-restricted, this assumption was not used in any of the proofs. So \cite[Theorem 3.65]{CKM-ext} applies to our setting:
\begin{thm}\label{thm:Rep0A}
 Let $V$ be a vertex operator algebra, $\cC$ a category of grading-restricted generalized $V$-modules that satisfies the conditions of Theorem \ref{thm:main_thm}, $A$ a vertex operator algebra that satisfies the conditions of Theorem \ref{thm:ext_alg}(1), and $\rep^0 A$ the category of generalized $A$-modules $X$ which are objects of $\ind(\cC)$ as $V$-modules with respect to the vertex operator $Y_X(\iota_A(\cdot),x)$. Then $\rep^0 A$ has vertex and braided tensor category structures as described in Section \ref{sec:vertex_tensor}.
\end{thm}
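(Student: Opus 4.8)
The plan is to deduce the statement directly from the extension theory of \cite{CKM-ext}, applied with the ambient braided tensor category taken to be $\ind(\cC)$ in place of a category of grading-restricted modules. First I would invoke Theorem \ref{thm:ext_alg}: since $A$ satisfies the conditions of Theorem \ref{thm:ext_alg}(1), it is a commutative associative algebra $(A,\mu_A,\iota_A)$ in the braided tensor category $\ind(\cC)$, with $\mu_A$ the unique $V$-module homomorphism satisfying $\mu_A\circ\cY_{A,A}=Y$ and with $\iota_A(v)=v_{-1}\vac_A$. By Theorems \ref{thm:main_thm} and \ref{thm:indC_vrtx_tens}, $\ind(\cC)$ is a $\CC$-linear braided tensor category with twist whose tensor product and structure isomorphisms are the vertex-algebraic ones of Section \ref{sec:vertex_tensor}, characterized by the intertwining operators $\cY_{X_1,X_2}$ of Theorem \ref{thm:indC_tens_prod_intw_op}. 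Thus $\ind(\cC)$ occupies exactly the position of the ambient category in \cite{CKM-ext}, and $A$ is an extension vertex operator algebra in the sense used there.

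With this in place, the category $\rep^0 A$ of local $A$-modules inherits vertex and braided tensor category structure by the analogue of \cite[Theorem 3.65]{CKM-ext}. Concretely, the tensor product $X_1\tensA X_2$ of two objects is realized as a cokernel of the pair of morphisms built from the two $A$-actions and the braiding of $\ind(\cC)$, and the unit, associativity, and braiding isomorphisms on $\rep^0 A$ are induced from those of $\ind(\cC)$ via the universal property of this cokernel. Because the $A$-actions and the tensor product $X_1\widehat{\tens}X_2$ are characterized by the intertwining operators of Theorem \ref{thm:indC_vrtx_tens}, the structure maps on $\rep^0 A$ are again characterized by intertwining operators, so the resulting structure is the vertex-algebraic one of Section \ref{sec:vertex_tensor}. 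The $P(z)$-tensor products and parallel-transport isomorphisms are obtained in the same way from the $P(z)$-intertwining maps $\cY_{X_1,X_2}(\cdot,e^{\log z})\cdot$, exactly as in \cite[Section 3.5]{CKM-ext} and as already indicated in the discussion following Theorem \ref{thm:indC_vrtx_tens}.

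The substantive point, and the step I expect to require the most care, is that the arguments of \cite{CKM-ext} were written for categories of \emph{grading-restricted} generalized modules, whereas objects of $\ind(\cC)$ (and hence of $\rep^0 A$) typically fail the grading-restriction conditions: they may have infinite-dimensional $L(0)$-generalized eigenspaces and need not be lower bounded. I would therefore verify that grading restriction is never actually used in the relevant proofs of \cite{CKM-ext}. Those constructions depend only on: (i) the ambient category being a $\CC$-linear abelian braided (vertex) tensor category, which holds for $\ind(\cC)$ by Proposition \ref{prop:IndC_abelian} and the results of Sections \ref{sec:ind_tens_cat}--\ref{sec:ind_lim_vrtx_tens}; (ii) right exactness of the tensor product bifunctor, which holds for the vertex-algebraic structure by \cite[Proposition 4.26]{HLZ3}; (iii) absolute convergence of products and iterates of the intertwining operators $\cY_{X_1,X_2}$, established for $\ind(\cC)$ in Theorem \ref{thm:indC_vrtx_tens}(3); and (iv) the universal property of Theorem \ref{thm:indC_tens_prod_intw_op}(2) identifying module homomorphisms out of $X_1\widehat{\tens}X_2$ with intertwining operators. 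None of (i)--(iv) requires grading restriction. Once each cokernel, coequalizer, and structure isomorphism in \cite{CKM-ext} is seen to be assembled solely from these ingredients, the conclusion that $\rep^0 A$ is a vertex and braided tensor category as described in Section \ref{sec:vertex_tensor} follows.
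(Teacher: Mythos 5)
Your proposal is correct and follows essentially the same route as the paper: both apply \cite[Theorem 3.65]{CKM-ext} with the ambient braided tensor category taken to be $\ind(\cC)$ (using Theorem \ref{thm:ext_alg} to realize $A$ as a commutative algebra there), with the key observation being that the grading-restriction assumptions in \cite{CKM-ext} are never used in an essential way. Your itemized verification of what the \cite{CKM-ext} constructions actually require is a more explicit justification of the one-line claim the paper makes, but it is the same argument.
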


We conclude by applying Theorem \ref{thm:Rep0A} to several vertex operator algebras $A$:
\begin{exam}
The lattice vertex operator algebra $V_L$ with $L=\sqrt{2}\ZZ$ and its irreducible module $V_{\frac{1}{\sqrt{2}}+L}$ are objects in the Virasoro direct limit completion $\ind(\cC^1_1)$ at central charge $1$. Since $V_L$ is a strongly rational vertex operator algebra, Theorem \ref{thm:Rep0A} does not give any new information about $V_L$. However, Theorem \ref{thm:Rep0A} does imply that any intermediate subalgebra $L(1,0)\subseteq A\subseteq V_L$ has a braided tensor category of generalized modules that includes at least those $A$-modules occurring in the decomposition of $V_{\frac{1}{\sqrt{2}}\ZZ}$ as an $A$-module.

 For example, consider the fixed-point subalgebras of automorphism groups of $V_L$. Since $\mathrm{Aut}(V_L)$ contains $SO(3)$, all finite subgroups of $SO(3)$, especially the alternating group $A_5$, act on $V_L$. It is expected that the fixed-point subalgebra $V_L^{A_5}$ should be strongly rational, but since $A_5$ is non-abelian simple, the results of \cite{Mi_C2, CM} do not apply. In any case, Theorem \ref{thm:Rep0A} now implies that at least $V_L^{A_5}$ has a braided tensor category of modules that includes the irreducible $V_L^{A_5}$-modules occurring in the decomposition of $V_L$. By \cite[Corollary 4.8]{McR1}, these irreducible $V_L^{A_5}$-modules generate a tensor subcategory that is braided tensor equivalent to $\rep A_5$; in particular, this braided tensor subcategory is rigid.  It would be interesting to see whether this rigid symmetric tensor category of $V_L^{A_5}$-modules might be useful for proving that $V_L^{A_5}$ is $C_2$-cofinite; by \cite[Main Theorem 2]{McR2}, this would be enough for showing that $V_L^{A_5}$ is strongly rational.
\end{exam}

\begin{exam}
 For an integer $p\geq 2$, let $\cM(p)$ denote the singlet vertex operator algebra: this is a subalgebra of the rank-one Heisenberg vertex operator algebra with a modified conformal vector. It was shown in \cite{Ad} that $\cM(p)$ is an infinite direct sum of irreducible $C_1$-cofinite modules for its Virasoro subalgebra $L(c_p,0)$, where $c_p=13-6p-6p^{-1}$. Thus $\cM(p)$ is an algebra in the braided tensor category $\ind(\cC^1_{c_p})$ and we can use Theorem \ref{thm:Rep0A} to conclude that $\rep^0\cM(p)$ has vertex and braided tensor category structures.

 The braided tensor category $\rep^0\cM(p)$ is too large since it includes generalized modules with infinite-dimensional conformal weight spaces. However, it contains interesting subcategories that are closed under the tensor product. For example, Corollary \ref{cor:ImY_C1} implies that $\cC^1_{\cM(p)}\cap\rep^0\cM(p)$ is a braided monoidal subcategory of $\rep^0\cM(p)$ (it will also be an abelian category if it is closed under submodules). The tensor structure of modules in this subcategory will be studied in detail in \cite{CMY}. The existence of braided monoidal category structure on $\cC^1_{\cM(p)}\cap\rep^0\cM(p)$ partially resolves a conjecture from \cite{CMR}; however, we have not shown here that the full category $\cC^1_{\cM(p)}$ has braided tensor category structure. This is because the so-called ``typical'' irreducible $\cM(p)$-modules are $C_1$-cofinite as $\cM(p)$-modules by \cite[Theorem 16]{CMR}, but they do not decompose as sums of $C_1$-cofinite $L(c_p,0)$-modules and thus are not objects of $\ind(\cC^1_{c_p})$.
\end{exam}

\begin{exam}
The $\mathcal B_p$ algebras introduced in \cite{CRW} are extensions of the tensor product of $\cM(p)$ with a rank-one Heisenberg vertex operator algebra. They are subregular $W$-algebras of type $A$ (when $p>2$) at certain boundary admissible level, and they are also chiral algebras of Argyres-Douglas theories in physics \cite{ACGY}. The first two examples are the $\beta\gamma$ vertex algebra ($p=2$) and the affine vertex algebra $V_{-4/3}(\mathfrak{sl}_2)$ ($p=3$) \cite{Ad2}. Aside from work of Allen and Wood on the $\beta\gamma$ vertex algebra \cite{AW}, establishing rigid vertex tensor category structure beyond ordinary modules is completely open for affine vertex algebras and $W$-algebras. But now, using the previous example, we can show that there is a full rigid braided tensor category of $\mathcal B_p$-modules that contains relaxed highest-weight modules. Most of the braided tensor category structure on relaxed highest-weight modules for $\mathcal{B}_p$ algebras has been conjectured \cite{ACKR}, based on expected relationships between the representation theories of singlet algebras and of unrolled unrestricted quantum groups of $\mathfrak{sl}_2$ \cite{CGP}. Now with the existence of rigid braided tensor categories for $\mathcal{B}_p$ algebras, one can partially prove these conjectures.
\end{exam}

\begin{exam}\label{ex:svir}
For a simple Lie algebra $\mathfrak{g}$, the category $KL_k(\mathfrak{g})$ of finitely-generated grading-restricted generalized modules for the universal affine vertex operator algebra $V^k(\mathfrak g)$ at level $k$ has vertex tensor category structure if $k+h^\vee \notin \mathbb Q_{\geq 0}$ \cite{KL1}-\cite{KL5}.
Let $V^k(\lambda)$ denote the generalized Verma module of level $k$ whose lowest conformal weight space is the irreducible highest-weight $\mathfrak{g}$-module with highest weight $\lambda $. There are many interesting vertex operator (super)algebra extensions.

 Take as simplest example $\mathfrak g = \mathfrak{sl}_2$
and set $k =  \frac{2-3t}{2t-1}$, $c_t = 13 -6t-6t^{-1}$, $s = 2k+3$ (so that $2t-1=s^{-1}$), $c_s= \frac{15}{2}-3s-3s^{-1}$, $\lambda_r= (r-1)\omega$ with $\omega$ the fundamental weight of $\mathfrak{sl}_2$. Also take the simple Virasoro vertex operator algebra $L(c_t,0)$ at central charge $c_t$, which has $C_1$-cofinite irreducible modules $L(c_t,h_{r,s})$ for $r,s\in\ZZ_+$ with lowest conformal weights
\begin{equation*}
 h_{r,s} =\frac{r^2-1}{4} t-\frac{rs-1}{2}+\frac{s^2-1}{4}t^{-1}.
\end{equation*}
We use the same notation for the simple Virasoro vertex operator algebra at central charge $c_{k+2}=13-6(k+2)-6(k+2)^{-1}$.

Corollary 2.6 together with Theorem 2.10  of \cite{CGL} say that for generic $k$ (that is, as vertex algebras over a localization of the polynomial ring in $k$),
\begin{equation}\label{eqn:coset_ext}
\begin{split}
V^k(\mathfrak{osp}_{1|2}) &\cong \bigoplus_{r=1}^\infty V^k(\lambda_r) \otimes L(c_t, h_{1, r})\\
S(c_s, 0) \otimes F &\cong \bigoplus_{r=1}^\infty L(c_{k+2}, h_{1, r})\otimes L(c_t, h_{1, r})
\end{split}
\end{equation}
with $V^k(\mathfrak{osp}_{1|2})$ the universal vertex operator superalgebra of $\mathfrak{osp}_{1|2}$ at level $k$, $S(c_s, 0)$ the $N=1$ super Virasoro algebra at central charge $c_s$, and $F$ the free fermion vertex superalgebra at central charge $\frac{1}{2}$.
Thus for generic $k$, these are commutative superalgebras in the completions $\ind(KL_k(\mathfrak{sl}_2) \boxtimes \cC^1_{c_t})$, $\ind(\cC^1_{c_{k+2}} \boxtimes \cC^1_{c_t})$ of the Deligne products of the underlying categories. Assuming $k\notin\mathbb{Q}$, both $KL_k(\mathfrak{sl}_2)$ and the Virasoro categories are semisimple, so the Deligne product categories are semisimple and Theorems 5.2 and 5.5 of \cite{CKM2} imply that they have vertex tensor category structure. Moreover, the direct limit completions are semisimple; this means that their Deligne product subcategories are the same as their subcategories of $C_1$-cofinite modules, and then it follows from Corollary \ref{cor:ImY_C1} that condition (5) of Theorem \ref{thm:main_thm} is also satisfied.

We now study $S(c_s,0)$-modules using the induction functor
\begin{equation*}
 \cF: \cC^1_{c_{k+2}}\tens\cC^1_{c_t}\rightarrow\rep S(c_s,0)\otimes F
\end{equation*}
from \cite{CKM-ext}. By \cite[Proposition 2.65]{CKM-ext}, an $L(c_{k+2},0)\otimes L(c_t,0)$-module induces to a local module, in $\rep^0 S(c_s,0)\otimes F$, if and only if its monodromy with $S(c_s,0)\otimes F$ is trivial. To calculate monodromies, we use the fusion rules
\[
L(c, h_{r, 1})  \boxtimes L(c, h_{1, s}) \cong L(c, h_{r, s})
\]
from \cite[Theorem 5.2.4]{CJORY}, valid for any $c\notin\mathbb{Q}$, and the balancing equation with twist $\theta=e^{2\pi i L(0)}$:
\begin{align*}
 \cR_{L(c,h_{1,s}),L(c,h_{r,1})}\circ\cR_{L(c,h_{r,1}),L(c,h_{1,s})} & =\theta_{L(c,h_{r,s})}\circ(\theta_{L(c,h_{r,1})}^{-1}\tens\theta_{L(c,h_{1,s})}^{-1})\nonumber\\
 & = e^{2\pi i(h_{r,s}-h_{r,1}-h_{1,s})}\Id_{L(c,h_{r,s})} = e^{\pi i(r+s-rs-1)}\Id_{L(c,h_{r,s})}.
\end{align*}
From this together with \eqref{eqn:coset_ext}, we see that $L(c_{k+2}, h_{n, 1})\otimes L(c_t, h_{m, 1})$ induces to a local $S(c_s,0)\otimes F$-module if and only if $m+n$ is even. As an $L(c_{k+2}, 0)\otimes L(c_t, 0)$-module, we have
\[
\mathcal F(L(c_{k+2}, h_{n, 1})\otimes L(c_t, h_{m, 1})) \cong \bigoplus_{r=1}^\infty L(c_{k+2}, h_{n, r})\otimes L(c_t, h_{m, r}).
\]
The minimum conformal weight of this module occurs in the summand with $r=\frac{n+m}{2}$ and takes the value
\[
\Delta_{n, m} = \frac{n^2-1}{8}s + \frac{m^2-1}{8}s^{-1} -\frac{mn-1}{4}.
\]
Define $S(c_s, \Delta_{n, m})$ by
$S(c_s, \Delta_{n, m}) \otimes F=\mathcal F(L(c_{k+2}, h_{n, 1})\otimes L(c_t, h_{m, 1}))$.

Since induction is a monoidal functor, the fusion rules for the modules $S(c_s,\Delta_{n,m})$ are obtained from those of the Virasoro algebra computed in \cite{FZ}:
\[
S(c_s, \Delta_{n, m}) \boxtimes S(c_s, \Delta_{n', m'}) \cong \bigoplus_{\substack{n'' = |n-n'|+1 \\ n+n'+n'' \ \text{odd}}}^{n+n'-1}
 \bigoplus_{\substack{m'' = |m-m'|+1 \\ m+m'+m'' \ \text{odd}}}^{m+m'-1}S(c_s, \Delta_{n'', m''}).
\]
The modules $S(c_s, \Delta_{n, m})$ are simple by \cite[Proposition 4.4]{CKM-ext}, whose proof is valid for superalgebras in a direct limit completion. They are also non-isomorphic since  by Frobenius reciprocity \cite{KO, CKM-ext} we have
\begin{equation}\nonumber
\begin{split}
\delta_{n, n'}\delta_{m, m'} & \mathbb C \cong \text{Hom}_{\cC^1_{c_{k+2}} \boxtimes \cC^1_{c_t}}\left(L(c_{k+2}, h_{n, 1})\otimes L(c_t, h_{m, 1}),   L(c_{k+2}, h_{n', 1})\otimes L(c_t, h_{m', 1}) \right)\\
&\cong  \text{Hom}_{\ind(\cC^1_{c_{k+2}} \boxtimes \cC^1_{c_t})}\left(L(c_{k+2}, h_{n, 1})\otimes L(c_t, h_{m, 1}),   L(c_{k+2}, h_{n', 1})\otimes L(c_t, h_{m', 1}) \right)\\
&\cong  \text{Hom}_{\ind(\cC^1_{c_{k+2}} \boxtimes \cC^1_{c_t})}\left(L(c_{k+2}, h_{n, 1})\otimes L(c_t, h_{m, 1}),  \bigoplus_{r=1}^\infty L(c_{k+2}, h_{n', r})\otimes L(c_t, h_{m', r}) \right)\\
&\cong  \text{Hom}_{\text{Rep}\,S(c_s, 0) \otimes F}\left(\mathcal F(L(c_{k+2}, h_{n, 1})\otimes L(c_t, h_{m, 1})) ,  \mathcal F(L(c_{k+2}, h_{n', 1})\otimes L(c_t, h_{m', 1})) \right)\\
&\cong \text{Hom}_{\text{Rep}\,S(c_s, 0) \otimes F}\left(S(c_s, \Delta_{n, m}) \otimes F,  S(c_s, \Delta_{n', m'}) \otimes F \right)\\
&\cong  \text{Hom}_{\text{Rep}\,S(c_s, 0)}\left(S(c_s, \Delta_{n, m}),  S(c_s, \Delta_{n', m'})\right).
\end{split}
\end{equation}
Let $\cC_c^{1, L}$ denote the full tensor subcategory of $\cC^1_c$ whose simple objects are the $L(c, h_{n, 1})$ for $n \in  \mathbb Z_{\geq 1}$, and
let $(\cC^{1, L}_{c_{k+2}} \boxtimes \cC^{1, L}_{c_t})_0$ be the subcategory of $\cC^{1. L}_{c_{k+2}} \boxtimes \cC^{1, L}_{c_t}$  whose simple objects are the $L(c_{k+2}, h_{n, 1}) \otimes L(c_t, h_{m, 1})$ with $n+m$ even.
 Then because the $S(c_s,\Delta_{n,m})$ are simple and distinct,
\[
\mathcal F: (\cC^{1, L}_{c_{k+2}} \boxtimes \cC^{1, L}_{c_t})_0 \rightarrow \text{Rep}^0\,S(c_s, 0) \otimes F
\]
is fully faithful and so its image is braided tensor equivalent to $(\cC^{1, L}_{c_{k+2}} \otimes \cC^{1, L}_{c_t})_0$ (see \cite[Theorem 2.67]{CKM-ext}). In particular, the image is rigid by \cite[Theorem 5.5.3]{CJORY}.

Next we verify that our braided tensor category of $S(c_s,0)$-modules is non-degenerate, that is, has trivial M\"{u}ger center. If $S_{n, m} = S(c_s,\Delta_{n,m})$ is transparent, then in particular it centralizes $S(c_s,\Delta_{2,2})$. The fusion product is
\begin{equation*}
\begin{split}
S_{2, 2} \boxtimes S_{n, m} &\cong S_{n-1, m-1} \oplus  S_{n-1, m+1} \oplus S_{n+1, m-1} \oplus  S_{n+1, m+1}
\end{split}
\end{equation*}
with the convention $S_{0, m} = 0 = S_{n, 0}$.
As monodromy is determined by conformal weight due to balancing, the monodromy restricted to the summand $S_{n+\epsilon, m+\epsilon'}$ acts by the scalar
$e^{2\pi i (\Delta_{n+\epsilon, m+\epsilon'} -\Delta_{n, m} -\Delta_{2, 2} )}$. Suppose that
\[
\Delta_{n+\epsilon, m+\epsilon'} -\Delta_{n, m} -\Delta_{2, 2}  = \frac{s}{4}(n\epsilon -1) + \frac{s^{-1}}{4}(m\epsilon' -1) - \frac{1}{4}(n\epsilon' +  m\epsilon -3 + \epsilon\epsilon')
\]
is an integer $N$ for some $(n,m)\neq(1,1)$ and  $\epsilon, \epsilon' \in \{ \pm 1\}$, assuming without loss of generality $n \neq 1$. Then
\[
\Delta_{n-\epsilon, m+\epsilon'} -\Delta_{n, m} -\Delta_{2, 2}  = N - \frac{s}{2}n\epsilon  + \frac{1}{2}(  m\epsilon +\epsilon\epsilon')
\]
is clearly not an integer, so $S_{n, m}$ is not transparent and we have verified non-degeneracy.

In conclusion, we have shown that the $S(c_s,0)$-modules $S(c_s,\Delta_{m,n})$ are the simple objects of a semisimple non-degenerate rigid braided tensor category that is braided tensor equivalent to $(\cC^{1, L}_{c_{k+2}} \boxtimes \cC^{1, L}_{c_t})_0$. Note that $e^{2\pi i L(0)}$ does not quite define a twist on this category since $S(c_s,0)$ is $\frac{1}{2}\ZZ$-graded. Instead, we get a twist by setting $\theta_X=P_X e^{2\pi i L(0)}$, where $P_X$ is the parity involution on an $S(c_s,0)$-module $X$. With this twist, $\cF$ restricted to $(\cC^{1, L}_{c_{k+2}} \boxtimes \cC^{1, L}_{c_t})_0$ becomes an equivalence of braided ribbon categories.

We turn to $V^k(\mathfrak{osp}_{1|2})$ next. The idea of studying affine $\mathfrak{osp}(1|2)$ via coset extensions was used at admissible level in
\cite{CFK, CKLR};  our results here are the generic-level analogue of those in \cite{CFK}.
Using $\mathcal G$ to denote the induction functor for $V^k(\mathfrak{osp}_{1|2})$, we define
\[
M^k(n) = \mathcal G(V^k(\mathfrak{sl}_2) \otimes L(c_t, h_{n, 1})) \cong \bigoplus_{r=1}^\infty V^k(\lambda_r) \otimes L(c_t, h_{n, r}).
\]
This is a local module, in $\rep^0 V^k(\mathfrak{osp}_{1\vert 2})$, if and only if $n$ is odd. The summands for $r = \frac{n \pm 1}{2}$ have lowest conformal weight, equal to $\frac{n^2-1}{8s}$; the highest $\mathfrak{sl}_2$-weight in these top spaces is $\frac{n-1}{2}\omega$. The $M^k(n)$ are simple by \cite[Proposition 4.4]{CKM-ext}, and since induction is monoidal the fusion rules are
\[
M^k(n) \boxtimes M^k(n') \cong  \bigoplus_{\substack{n'' = |n-n'|+1 \\ n+n'+n'' \ \text{odd}}}^{n+n'-1} M^k(n'').
\]
The subcategory $(\cC^{1, L}_{c_t})_0\subseteq\cC^{1, L}_{c_t}$  with simple objects $L(c_t, h_{n, 1})$, $n$ odd, embeds as a  braided tensor subcategory of $KL_k(\mathfrak{sl}_2) \boxtimes \cC^1_{c_t}$ via $L(c_t, h_{n, 1})\mapsto V^k(\mathfrak{sl}_2) \otimes L(c_t, h_{n, 1})$.
Then by Frobenius reciprocity as in the $N=1$ super Virasoro case, the restriction of $\mathcal G$ to $(\cC^{1, L}_{c_t})_0$ is fully faithful, so its image is braided tensor equivalent to $(\cC^{1, L}_{c_t})_0$ and is in particular rigid. Non-degeneracy is easily verified as before, and in this case $e^{2\pi i L(0)}$ does define a twist on our category of $V^k(\mathfrak{osp}_{1\vert 2})$-modules. We conclude that the $V^k(\mathfrak{osp}_{1\vert 2})$-modules $M^k(n)$ for $n$ odd are the simple objects of a semisimple non-degenerate braided ribbon category equivalent to $(\cC^{1, L}_{c_t})_0$ as a braided ribbon category.
\end{exam}

\end{document}